\numberwithin{equation}{section}
\theoremstyle{plain}
\newtheorem{theorem}{Theorem}[section]
\newtheorem{claim}[theorem]{Claim}
\newtheorem{lemma}[theorem]{Lemma}
\newtheorem{proposition}[theorem]{Proposition}
\newtheorem{corollary}[theorem]{Corollary}
\theoremstyle{definition}
\newtheorem{definition}[theorem]{Definition}
\newtheorem*{definition*}{Definition}
\newtheorem{example}[theorem]{Example}
\newtheorem{notation}[theorem]{Notation}
\newtheorem{remark}[theorem]{Remark}
\newtheorem{conjecture}[theorem]{Conjecture}
\newtheorem{op}[theorem]{Open Problem}
\newcommand{\ra}{\rightarrow}
\begin{document}

\title[Tensor product indecomposability results for existentially closed factors]
{Tensor product indecomposability results for existentially closed factors}

\author[I. Chifan]{Ionu\c t Chifan}
\address{14 MacLean Hall, Department of Mathematics, The University of Iowa, IA, 52242, U.S.A.}\email{ionut-chifan@uiowa.edu}

\author[D. Drimbe]{Daniel Drimbe}
\address{Department of Mathematics, KU Leuven, Celestijnenlaan 200b, B-3001 Leuven, Belgium}\email{daniel.drimbe@kuleuven.be}

\author[A. Ioana]{Adrian Ioana}
\address{Department of Mathematics, University of California San Diego, 9500 Gilman Drive, La Jolla, CA 92093, USA}\email{aioana@ucsd.edu}

{\thanks{I.C. was partially supported by NSF Grants DMS-2154637 and FRG-DMS-1854194; D.D. was supported by the postdoctoral fellowship fundamental research 12T5221N of the Research Foundation Flanders; A.I. was partially supported by NSF FRG Grant \#1854074.}}
\begin{abstract} In the first part of the paper we survey several results from Popa's deformation/rigidity theory on the classification of tensor product decompositions of large natural classes of II$_1$ factors.  Using a m\'elange of techniques from deformation/rigidity theory, model theory, and the recent works \cite{CIOS21,CDI22} we highlight an uncountable family of  existentially closed  II$_1$ factors $M$ which do not admit tensor product decompositions  $M= P\overline \otimes Q$ into diffuse factors where  $Q$ is full. In the last section we discuss several open problems regarding the structural theory of existentially closed factors. 
 \end{abstract}

\maketitle

\section{Introduction}

A {\it von Neumann algebra} is an algebra of bounded linear operators on a Hilbert space which is closed under the adjoint operation and in the weak operator topology. 
A central theme in operator algebras is the study of tensor product decompositions of 
II$_1$ {\it factors}: indecomposable infinite dimensional von Neumann algebras which admit a trace.  
A II$_1$ factor which does not admit a tensor product decomposition into diffuse factors is called \emph{prime}. Using the notion of $*$-orthogonal von Neumann algebras, Popa showed in \cite{Po83} that the II$_1$ factor $L(\mathbb F_S)$ arising from the free group $\mathbb F_S$ with uncountably many generators $S$ is prime. Using Voiculescu's free probability theory, Ge  obtained the first examples of separable prime II$_1$ factors by showing that 
the free group factors $L(\mathbb F_n)$, with $n\ge 2$, have this property \cite{Ge96}.  

These results have been since generalized and strengthened in several ways.
Ozawa  used strong C$^*$-algebraic techniques  to prove that for any icc hyperbolic group $\Gamma$, the II$_1$ factor $ L(\Gamma)$ is \emph{solid}, meaning that  the relative commutant of any diffuse subalgebra is amenable  \cite{Oz03}. By developing a new approach approach on closable derivations, Peterson showed primeness of $L(\Gamma)$, whenever $\Gamma$ is any nonamenable icc group with positive first Betti number \cite{Pe06}. Using the framework of his powerful deformation/rigidity theory, Popa gave a new proof of the solidity of $L(\mathbb F_n)$ \cite{Po06b}. 
Subsequently, an intense activity in the study of tensor product decompositions of II$_1$ factors  led to a plethora of new examples of prime II$_1$ factors arising from various classes of countable groups and their measure preserving actions \cite{Oz04, Po06a, CI08, CH08, Va10b, Bo12, HV12, DI12, CKP14, Ho15, DHI16, CdSS17, Dr19a, IM19}.

In most of these results, the groups $\Gamma$ for which $L(\Gamma)$ was proven to be prime either satisfy an algebraic assumption (e.g.\ $\Gamma$ is an amalgamated free product group or a
wreath product group) or have some ``rank one" properties 
(e.g., $\Gamma$ is a lattice in a rank one simple Lie group or admits a certain unbounded quasi-cocycle). On the other hand, the primeness problem for higher rank lattices
is wide open. A general conjecture predicts that any icc irreducible lattice $\Gamma$ in a product $G_1\times\dots\times G_n$ of connected non-compact simple real Lie groups with finite center gives rise to a prime II$_1$ factor  \cite[Problem V]{Io17}. This is a von Neumann algebraic counterpart of the fact, implied by Margulis' normal subgroup theorem (see \cite[Theorem 8.1.1]{Zi84}), that $\Gamma$ does not admit any direct product decomposition into infinite groups.  Using methods from Popa's deformation/rigidity theory, this conjecture has been confirmed when $G_1,\dots, G_n$ have rank one \cite{DHI16}.
On the other hand, when at least one of the groups $G_1,\dots, G_n$ has rank greater than one, the conjecture is notoriously hard. Some positive evidence in this case has been obtained in \cite{IM19}. Using results from \cite{Io08,BIP18}, it was shown in \cite{IM19} that any profinite free ergodic probability measure preserving action of $\Gamma$ gives rise to a prime II$_1$ factor.

The first main goal of the paper is to discuss 
Popa’s deformation/rigidity theory. This is a remarkable framework that, starting in the early 2000s, has led to unprecedented
progress in the theory of von Neumann algebras (see the survey papers \cite{Po07,Va10,
Io12, Io17}). At the heart of Popa’s theory is the innovative idea of using
deformations of II$_1$ factors to locate rigid subalgebras. We briefly explain this principle in Section \ref{section.popa.deformation.rigidity} and, in addition, we present Popa’s proof \cite{Po06b} of Ozawa’s solidity result \cite{Oz03} of the free group factors.

The second main goal of this paper is to study tensor product decompositions for certain natural families of II$_1$ factors arising from model theory \cite{GHS13,FGHS16}. 
To motivate our results, we recall that 
existentially closed groups are simple \cite{HS88} and as such do not admit any nontrivial direct or semi-direct product decompositions. Since existentially closed factors are von Neumann algebraic analogues of existentially closed groups, it is reasonable to expect they share similar indecomposability properties.
It is known that existentially closed factors $M$ are {\it McDuff}, i.e. $M\cong M\bar\otimes R$ where $R$ is the hyperfinite II$_1$ factor, \cite{GHS13}, but {\it not strongly McDuff}\footnote{This terminology was introduced by Popa in \cite{Po09}.}, i.e. $M$ does not admit a tensor product decomposition $B\bar\otimes R$ with $B$ a full II$_1$ factor and $R$ the hyperfinite II$_1$ factor,  \cite{AGKE20}.
However, besides this result, little is known regarding possible tensor decompositions of existentially closed factors.

In this paper we make progress on this problem by  obtaining in Sections \ref{Section.ec.factors} and \ref{Section.ec.factors.indecomposability}  a series of tensor indecomposability results for existentially closed factors. First,
by building upon \cite{CIOS21,CDI22} (see Section \ref{section.wreath}) and using Popa's deformation/rigidity theory we provide in Theorem \ref{etprime} an uncountable family of existentially closed II$_1$ factors $M$ that do not admit tensor product decompositions  $M=P\overline \otimes Q$ into diffuse factors with $Q$ full. 
In fact, we believe that this property holds for any existentially closed factor $M$ and we provide several instances when it holds under certain additional assumptions on $P$ or $Q$. For instance, we establish the property whenever $Q$ is a group von Neumann algebra of a non-inner amenable group,  see Theorem \ref{theorem.inneramenable}. Finally, we conclude our paper by proposing in Section \ref{section.open}  several open problems regarding the structure of existentially closed factors beyond tensor indecomposability.

\section{Preliminaries on von Neumann Algebras}

\subsection{Terminology} A {\it tracial von Neumann algebra} is a pair $(M,\tau)$ consisting of a von Neumann algebra $M$ and a faithful normal tracial state $\tau:M\rightarrow\mathbb C$. This induces a norm on $M$ by the formula $\|x\|_2=\tau(x^*x)^{1/2},$ for all $x\in M$.
We denote by $\mathscr U(M)$ the group of {\it unitary} elements of $M$, by $\mathcal Z(M)$ the {\it center} of $M$ and by $\text{Aut}(M)$ the group of $\tau$-preserving automorphisms of $M$. 

For $u\in\mathscr U(M)$, we denote by $\text{Ad}(u)\in\text{Aut}(M)$ the inner automorphism of $M$ given by $\text{Ad}(u)(x)=uxu^*$. The group of all inner automorphisms of $M$ is denoted by $\text{Inn}(M)$. For  a set $I$, we denote by $(M^I,\tau)$ the tensor product tracial von Neumann algebra $\overline{\otimes}_{i\in I}(M, \tau)$. For $J\subset I$, we view $M^J\subset M^I$, in the natural way. For $i\in I$, we write $M^i$ instead of $M^{\{i\}}$.

All inclusions $P\subset M$ of von Neumann algebras are assumed unital unless otherwise stated. We denote by $E_{P}:M\to P$ the unique $\tau$-preserving {\it conditional expectation} from $M$ onto $P$, by $e_P:L^2(M)\to L^2(P)$ the orthogonal projection onto $L^2(P)$ and by $\langle M,e_P\rangle$ the Jones' basic construction of $P\subset M$. We also denote by $P'\cap M=\{x\in M\,:\, xy=yx, \text{ for all } y\in P\}$ the {\it relative commutant} of $P$ in $M$ and by $\mathscr N_{M}(P)=\{u\in\mathscr U(M)\,:\, uPu^*=P\}$ the {\it normalizer} of $P$ in $M$.  For two von Neumann subalgebras $P_1,P_2\subset M$, we denote by $P_1\vee P_2=W^*(P_1\cup P_2)$ the von Neumann algebra generated by $P_1$ and $P_2$.

Let $\omega$ be a free ultrafilter on $\mathbb N$. Consider the C$^*$-algebra $\ell^\infty(\mathbb N, M)=\{ (x_n)_n\subset M\,:\,  \sup_n\| x_n\|<\infty \}$ and its closed ideal $\mathcal I=\{(x_n)_n\in \ell^\infty(\mathbb N,M)\,:\, \lim_{n\to\omega}\|x_n\|_2=0  \}$. The {\it ultrapower} of $M$ is the tracial von Neumann algebra $M^\omega:=\ell^\infty (\mathbb N, M)/\mathcal I$ whose canonical trace is given by $\tau_\omega(x)=\lim_{n\to\omega} \tau (x_n)$, for any $x=(x_n)_{n}\in M^\omega.$

Finally, for any positive integer $n$, we denote by $\overline{1,n}$ the set $\{1,\dots,n\}$.

\subsection{Finite index inclusions of von Neumann algebras}
The Jones index for an inclusion $P\subseteq M$ of II$_1$ factors is the dimension of $L^2(M)$ as a left $P$-module \cite{Jo81}.
Pimsner and Popa defined a probabilistic notion of index for an inclusion $P \subseteq M$ of arbitrary von Neumann
algebras with conditional expectation, which extends 
Jones’ index for inclusions of II$_1$ factors  \cite[Theorem 2.2]{PP86}. Specifically, the inclusion $P \subseteq M $ of tracial von Neumann algebras
is said to have {\it probabilistic index} $[M:P]=\lambda^{-1}$, where
$$
\lambda={\rm inf}\{ \| E_P(x) \|_2^2 \|x\|_2^{-2}\,:\, x\in M_+, x\neq 0 \}.
$$
Here we use the convention that $\frac{1}{0}=\infty.$ 

We continue by recording several basic facts from the literature concerning finite index inclusions of von Neumann algebras which we will use  in the proofs of our main results in Sections \ref{Section.primeness} and \ref{Section.ec.factors.indecomposability}. First, recall that a von Neumann algebra $M$ is called completely atomic if $1$ is an orthogonal sum of minimal projections in $M$. The first three items are essentially contained in \cite{Po95} and we refer the reader to \cite[Proposition 2.3]{CdSS17} for a proof. The fourth item is precisely \cite[Proposition 2.1.15]{Jo81}, while the last one follows directly from the definition.

\begin{proposition}\label{findex}\label{Theorem.findex} Let $N\subset M$ be an inclusion of tracial von Neumann algebras with $[M:N]<\infty$ and $P$ a tracial factor. Then the following hold:

\begin{enumerate}

\item If $p\in N$ is a non-zero projection, then $[pMp:pNp]<\infty$. 
\item If $N$ is a factor and $r\in N'\cap M$ is a non-zero projection, then $[rMr: Nr]<\infty$.

\item If  $\mathcal Z(M)$ is completely atomic, then $\mathcal Z(N)$ is completely atomic.

\item If $M$ and $N$ are factors, then $[M\bar\otimes P: N\bar\otimes P]<\infty$.

\item Let $R\subset N$ be a von Neumann subalgebra satisfying $[N:R]<\infty$. Then $[M:R]=[M:N][N:R]$.

\end{enumerate}
\end{proposition}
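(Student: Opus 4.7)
My plan is to handle the five items in turn, relying throughout on the Pimsner-Popa index formula and, where the index is finite, on the existence of a finite Pimsner-Popa basis. Item (1) is immediate: since $p\in N$, any $x\in (pMp)_+$ lies in $M_+$ and satisfies $E_{pNp}(x)=pE_N(x)p=E_N(x)$, so $\|E_{pNp}(x)\|_2^2\ge [M:N]^{-1}\|x\|_2^2$; the normalization factor $\tau(p)$ appearing in the ratio computed inside $pMp$ cancels, giving $[pMp:pNp]\le[M:N]$.

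For (2), the key calculation is that since $N$ is a factor and $r\in N'\cap M$, one has $E_N(r)\in \mathcal{Z}(N)=\mathbb{C}$, whence $E_N(r)=\tau(r)\cdot 1$. A direct verification then shows that for $x\in rMr$ the conditional expectation onto $Nr$ is $E_{Nr}(x)=\tau(r)^{-1}E_N(x)\,r$. Substituting this into the Pimsner-Popa ratio for $Nr\subset rMr$ and using $\|E_N(x)\|_2^2\ge [M:N]^{-1}\|x\|_2^2$ yields the positive lower bound $\lambda(Nr\subset rMr)\ge([M:N]\,\tau(r))^{-1}$, hence $[rMr:Nr]<\infty$.

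Items (4) and (5) are essentially formal. For (4), I would use the module-dimension interpretation: $L^2(M\bar\otimes P)=L^2(M)\bar\otimes L^2(P)$ is a left $N\bar\otimes P$-module whose von Neumann dimension is the product $\dim_N L^2(M)\cdot\dim_P L^2(P)=[M:N]\cdot 1<\infty$. For (5), composing conditional expectations gives $E_R=E_R\circ E_N$, and hence $\lambda(R\subset M)\ge \lambda(R\subset N)\lambda(N\subset M)$; the reverse inequality is obtained by multiplying finite Pimsner-Popa bases for $R\subset N$ and $N\subset M$ to produce a finite Pimsner-Popa basis of matching size for $R\subset M$, yielding the equality $[M:R]=[M:N][N:R]$.

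The main obstacle is (3). In the factor case the argument is short: $[M:N]<\infty$ forces $N'\cap M$ to be finite-dimensional, and since $\mathcal{Z}(N)\subset N'\cap M$, $\mathcal{Z}(N)$ is then finite-dimensional, in particular completely atomic. For general $M$ with $\mathcal{Z}(M)$ completely atomic, I would decompose $\mathcal{Z}(M)=\bigoplus_i\mathbb{C}p_i$ into minimal central projections, apply the tower formula (5) to the intermediate inclusion $N\subset N\vee \mathcal{Z}(M)\subset M$ (both subinclusions of finite index), and reduce to the factor case by compressing to each corner $p_iMp_i$. The delicate point is that the projections $p_i$ need not lie in $N$, so one must patch together the atomic structures coming from the different corners; this assembly is carried out in detail in \cite[Proposition 2.3]{CdSS17} following \cite{Po95}.
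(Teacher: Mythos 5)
The paper itself offers no internal proof of this proposition: it simply points to \cite[Proposition 2.3]{CdSS17} (following \cite{Po95}) for items (1)--(3), to \cite[Proposition 2.1.15]{Jo81} for (4), and asserts that (5) ``follows directly from the definition.'' So there is no in-paper argument to match yours against; your sketches therefore supply genuine content rather than an alternative route.

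Your arguments for (1), (2) and (4) are correct and essentially the standard ones: the cancellation of $\tau(p)$ in (1) is right; in (2) the identification $E_{Nr}(x)=\tau(r)^{-1}E_N(x)r$ (which uses $E_N(r)=\tau(r)1$, valid because $N$ is a factor) does give the bound $\lambda(Nr\subset rMr)\ge\tau(r)^{-1}[M:N]^{-1}$; and the coupling-constant computation in (4) is clean. Two points deserve a caveat, though. In (5), the inequality $[M:R]\le[M:N][N:R]$ indeed follows from $E_R=E_R\circ E_N$, but your reverse direction --- ``multiplying finite Pimsner--Popa bases \dots of matching size, yielding the equality'' --- conflates the probabilistic index with the cardinality of a Pimsner--Popa basis. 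These are not equal in general (a basis $\{m_i\}$ has index $\sum\tau(m_im_i^*)$, not $|I|$), so ``matching size'' does not give the equality; one has to compute the index of the product basis, keeping track of $E_N$ and $E_R$, which is more delicate when $N$ is not a factor. (The paper's ``directly from the definition'' is itself terse here.) In (3), your factor-case observation is fine, but the reduction has a real gap that you correctly identify but do not close: compressing by the minimal central projections $p_i\in\mathcal Z(M)$ controls $p_i(N\vee\mathcal Z(M))p_i$ inside the factor $p_iMp_i$, not $N$ itself, because $p_i$ need not lie in $N$; extracting the atomicity of $\mathcal Z(N)$ from this requires the careful bookkeeping of \cite[Proposition 2.3]{CdSS17}, to which you defer --- which is exactly what the paper does as well.
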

We end this section by recalling two more results on finite index inclusions, that will be essential to deriving our main results in Section 7.
\begin{lemma}[\text{\cite[Lemma 3.1]{Po02}}]\label{lemma.finite index.relative.commutant}
Let $P\subset Q\subset M$ be an inclusion of tracial von Neumann algebras such that $P\subseteq Q$ is a finite index inclusion of II$_1$ factors.
Then $[P'\cap M: Q'\cap M]<\infty$.
\end{lemma}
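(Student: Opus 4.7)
The plan is to exhibit the trace-preserving conditional expectation $E_{Q'\cap M}\colon P'\cap M\to Q'\cap M$ explicitly via a Pimsner--Popa basis of $Q$ over $P$ containing $1$, and to extract a Pimsner--Popa inequality from the presence of $1$ in the basis.

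By \cite{PP86}, since $[Q:P]<\infty$ there exists an orthonormal Pimsner--Popa basis $\{m_1,\dots,m_n\}\subset Q$ of $Q$ over $P$ satisfying $\sum_i m_i e_P m_i^*=1$ in $\langle Q,e_P\rangle$ and $E_P(m_i^*m_j)=\delta_{ij}$, and I arrange $m_1=1$ (permissible since $1\in Q$ satisfies $E_P(1)=1$). Applying the $\tau$-preserving conditional expectation $E_Q\colon\langle Q,e_P\rangle\to Q$ (which sends $e_P\mapsto[Q:P]^{-1}$) to the completeness relation yields $\sum_i m_i m_i^*=[Q:P]\cdot\mathbf{1}$. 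I then set $\Phi(x):=[Q:P]^{-1}\sum_i m_i x m_i^*$ for $x\in M$.

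Next I would verify that $\Phi$ restricts to a conditional expectation $P'\cap M\to Q'\cap M$. The nontrivial point $\Phi(P'\cap M)\subset Q'\cap M$ is the standard change-of-basis calculation: for $u\in\mathscr{U}(Q)$, expand $u m_i=\sum_j m_j p_{ji}$ with $p_{ji}:=E_P(m_j^*u m_i)\in P$; since $x\in P'$ commutes with the $p_{ji}$'s, and
\[
\sum_i p_{ji} p_{ki}^* \;=\; \sum_i E_P(m_j^*u m_i)\,E_P(m_i^*u^*m_k) \;=\; E_P(m_j^*u\,E_Q(u^*m_k)) \;=\; \delta_{jk}
\]
via the Pimsner--Popa expansion formula, one obtains $u\Phi(x)u^*=\Phi(x)$. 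Unitality, $(Q'\cap M)$-bimodularity, and $\Phi|_{Q'\cap M}=\id$ are immediate from $\sum_i m_i m_i^*=[Q:P]$ together with $m_i\in Q$.

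The Pimsner--Popa inequality now drops out from $m_1=1$: for any $x\in M_+$,
\[
[Q:P]\,\Phi(x) \;=\; \sum_i m_i x m_i^* \;\geq\; m_1 x m_1^* \;=\; x,
\]
so $\Phi(x)\geq [Q:P]^{-1}x$ for $x\in(P'\cap M)_+$. Once $\Phi$ is identified with the $\tau$-preserving conditional expectation $E_{Q'\cap M}$, one gets $\tau(x\,E_{Q'\cap M}(x))\geq [Q:P]^{-1}\tau(x^2)$ and thus $\|E_{Q'\cap M}(x)\|_2^2\geq [Q:P]^{-1}\|x\|_2^2$, yielding $[P'\cap M:Q'\cap M]\leq [Q:P]<\infty$. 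I expect the main obstacle to be this identification, i.e.\ the $\tau$-preservation of $\Phi$ (equivalently, $E_{P'\cap M}(\sum_i m_i^*m_i)=[Q:P]$). My strategy is first to observe that $\Phi|_{P'\cap M}$ is independent of the choice of basis---any two such bases are related by a right $U_n(P)$-change of basis, and for $x\in P'$ the $P$-valued matrix entries slide past $x$ so the cross-terms collapse via $uu^*=I$---and then to check the trace condition on a convenient basis where $\sum_i m_i^*m_i$ is visibly the scalar $[Q:P]$.
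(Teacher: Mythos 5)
The paper itself does not prove this lemma: it cites Popa's \cite[Lemma 3.1]{Po02}, so there is no in-text proof to compare against. Your averaging-map strategy is the right classical device, and most of it is correct: the change-of-basis computation showing $\Phi(P'\cap M)\subset Q'\cap M$ is fine (the chain $\sum_i p_{ji}p_{ki}^*=E_P(m_j^*uu^*m_k)=\delta_{jk}$ is exactly the Pimsner--Popa expansion together with $P$-bimodularity of $E_P$), the map $\Phi$ is a unital $(Q'\cap M)$-bimodular conditional expectation with $\Phi|_{Q'\cap M}=\mathrm{id}$, and the term $m_1=1$ in the sum gives $\Phi(x)\geq[Q:P]^{-1}x$ for $x\geq 0$. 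You correctly flag the one unproved step: identifying $\Phi$ with the \emph{trace-preserving} expectation $E_{Q'\cap M}$, which is what the probabilistic index in the paper's definition refers to.

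That identification is a genuine gap, and your proposed fix does not close it. The basis-independence of $\Phi|_{P'\cap M}$ is correct and your sketch of it is sound. However, a Pimsner--Popa basis with $\sum_i m_i^*m_i$ scalar need not exist: unlike $\sum_i m_im_i^*$, the element $\sum_i m_i^*m_i\in Q$ genuinely depends on the basis, and changing basis by a unitary $(u_{ij})\in U_n(P)$ with non-normal entries typically destroys the scalar property (one can see this already for $R\subset M_2(R)$). One can check that $E_{P'\cap M}(\sum_i m_i^*m_i)$ is basis-independent and lands in $Z(P'\cap Q)$, but showing it equals $[Q:P]\cdot 1$ in full generality requires real work and cannot be reduced to ``inspecting a convenient basis.'' Fortunately you do not need $\tau$-preservation of $\Phi$ at all. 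Since $\Phi(x)\in Q'\cap M$ and $E_{Q'\cap M}$ is trace-preserving, for $x\in(P'\cap M)_+$ one has
\[
[Q:P]^{-1}\|x\|_2^2\ \leq\ \tau\bigl(x^{1/2}\Phi(x)x^{1/2}\bigr)\ =\ \tau\bigl(E_{Q'\cap M}(x)\,\Phi(x)\bigr)\ \leq\ \|E_{Q'\cap M}(x)\|_2\,\|\Phi(x)\|_2,
\]
and the crude bound $\|\Phi(x)\|_2\leq\bigl([Q:P]^{-1}\sum_i\|m_i\|_\infty^2\bigr)\|x\|_2$ then yields $\|E_{Q'\cap M}(x)\|_2\geq c\|x\|_2$ for a constant $c>0$, hence $[P'\cap M:Q'\cap M]<\infty$. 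This is all the lemma asserts, and it bypasses the $\tau$-preservation question entirely (at the cost of a non-sharp index bound). I would replace the last paragraph of your proposal with this Cauchy--Schwarz step.
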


\begin{lemma}[\text{\cite[Lemma 2.3]{CIK13}}]\label{findexunderexpectation} Let $M,N\subseteq P
$ be von Neumann algebras. Let $M_0$ be the von Neumann algebra generated by $E_N(M)$ inside $N$,   $M_0:=E_N(M)''\subseteq N$. If $[P:M]<\infty$, then $[N:M_0]<\infty$.\end{lemma}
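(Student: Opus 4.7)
The plan is to run the argument purely at the $L^2$ level, using the Pimsner--Popa reformulation of finite probabilistic index recalled earlier in the paper. Set $\lambda := [P:M]^{-1}>0$; then by the very definition of probabilistic index,
\[
\|E_M(x)\|_2^2 \;\geq\; \lambda\,\|x\|_2^2 \qquad \text{for all } x\in P_+.
\]
For each $y\in N_+$ I would produce a natural element of $M_0$ that ``sees'' most of $y$, and then conclude by a single Cauchy--Schwarz that $\|E_{M_0}(y)\|_2$ cannot be much smaller than $\|y\|_2$.

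The probe I would use is $z := E_N(E_M(y))$. Since $E_M(y)\in M$, we have $z\in E_N(M)\subseteq M_0$, and $z$ is positive. The key computation is
\[
\tau(y\,z) \;=\; \tau\!\bigl(y\cdot E_N(E_M(y))\bigr) \;=\; \tau\!\bigl(y\cdot E_M(y)\bigr) \;=\; \tau\!\bigl(E_M(y)^2\bigr) \;=\; \|E_M(y)\|_2^2,
\]
where the second equality uses $y\in N$ together with the identity $\tau(a\,E_N(b)) = \tau(E_N(a)\,b)$, and the third uses $E_M(y)\in M$ together with the $M$-bimodularity of $E_M$ (so that $E_M(y\,E_M(y)) = E_M(y)^2$). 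Applying the Pimsner--Popa bound to $y\in N_+\subseteq P_+$ gives $\tau(yz)\geq \lambda\,\|y\|_2^2$.

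To finish, since $z\in M_0$ we have $\tau(yz) = \langle E_{M_0}(y),\,z\rangle$, and hence by Cauchy--Schwarz
\[
\lambda\,\|y\|_2^2 \;\leq\; \|E_{M_0}(y)\|_2\,\|z\|_2 \;\leq\; \|E_{M_0}(y)\|_2\,\|y\|_2,
\]
where the last inequality uses that both $E_N$ and $E_M$ are $\|\cdot\|_2$-contractions. Cancelling $\|y\|_2$ and squaring yields $\|E_{M_0}(y)\|_2^2 \geq \lambda^2\,\|y\|_2^2$, whence $[N:M_0]\leq \lambda^{-2} = [P:M]^2<\infty$. There is no serious obstacle in this argument; the one point that takes a moment of thought is realizing that the correct probe into $M_0$ is $E_N(E_M(y))$ rather than $E_{M_0}(y)$ itself, since only this choice routes the estimate through $M$ and lets the Pimsner--Popa inequality for $M\subseteq P$ enter.
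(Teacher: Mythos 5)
Your argument is correct, and since the paper only cites \cite[Lemma 2.3]{CIK13} without reproducing a proof, I can only compare against what is surely the standard argument — which yours is. Choosing $z = E_N(E_M(y))\in E_N(M)\subseteq M_0$ as the probe, computing $\tau(yz)=\|E_M(y)\|_2^2$ via trace-duality of the conditional expectations, invoking the Pimsner--Popa lower bound for $M\subseteq P$, and finishing with Cauchy--Schwarz is exactly the natural route at the $L^2$-level, and every step checks out ($z\geq 0$ so $z^*=z$, and $y\neq 0$ so division by $\|y\|_2$ is licit).

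One small observation: you can get a sharper constant for free. In the final chain you bounded $\|z\|_2\leq\|y\|_2$, giving $[N:M_0]\leq [P:M]^2$. If instead you only use one contraction, $\|z\|_2=\|E_N(E_M(y))\|_2\leq \|E_M(y)\|_2$, then Cauchy--Schwarz reads
\[
\|E_M(y)\|_2^2 \;=\;\tau(yz)\;\leq\;\|E_{M_0}(y)\|_2\,\|E_M(y)\|_2,
\]
which cancels to $\|E_{M_0}(y)\|_2\geq\|E_M(y)\|_2\geq \lambda^{1/2}\|y\|_2$, hence $[N:M_0]\leq [P:M]$. This does not affect the qualitative conclusion of the lemma, but it is the bound one would expect, and it costs nothing.
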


\subsection{Popa's intertwining techniques} Almost two decades ago, S. Popa  introduced  in \cite [Theorem 2.1 and Corollary 2.3]{Po03} a powerful analytic criterion for identifying intertwiners between arbitrary subalgebras of tracial von Neumann algebras, see Theorem \ref{corner} below. This technique,  known as \emph{Popa's intertwining-by-bimodules technique},  has played an essential role in the classification of von Neumann algebras program via Popa's deformation/rigidity theory.  

\begin{theorem}\emph{\cite{Po03}} \label{corner} Let $( M,\tau)$ be a  tracial von Neumann algebra and let $ P,  Q\subseteq  M$ be (not necessarily unital) von Neumann subalgebras. 
	Then the following are equivalent:
	\begin{enumerate}
		\item There exist projections $ p\in    P, q\in    Q$, a $\ast$-homomorphism $\theta:p  P p\rightarrow q Q q$  and a partial isometry $0\neq v\in  M $ such that $v^*v\leq p$, $vv^*\leq q$ and $\theta(x)v=vx$, for all $x\in p  P p$.
		\item For any group $\mathcal G\subset \mathscr U( P)$ such that $\mathcal G''=  P$ there is no net $(u_n)_n\subset \mathcal G$ satisfying $\|E_{  Q}(xu_ny)\|_2\rightarrow 0$, for all $x,y\in   M$.
		\item There exist finitely many $x_i, y_i \in  M$ and $C>0$ such that  $\sum_i\|E_{  Q}(x_i u y_i)\|^2_2\geq C$ for all $u\in \mathscr U( P)$.
		\item There exists a non-zero projection $f\in P'\cap \langle M, e_Q \rangle$ such that ${\rm Tr}(f)<\infty$.
	\end{enumerate}
\end{theorem}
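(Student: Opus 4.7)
The plan is to close the cycle $(1)\Rightarrow(3)\Rightarrow(2)\Rightarrow(4)\Rightarrow(1)$, with the Jones basic construction $\langle M,e_Q\rangle$ (equipped with its canonical faithful normal semifinite trace $\Tr$ determined by $\Tr(xe_Qy)=\tau(xy)$ for $x,y\in M$) as the central technical device. The guiding principle is that finite-$\Tr$ elements of $\langle M,e_Q\rangle$ play the role of ``compact operators relative to $Q$'', and an intertwiner of $P$ into $Q$ corresponds to a nonzero $P$-central such element.

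For $(1)\Rightarrow(3)$, I start from $p,q,v,\theta$ with $\theta(x)v=vx$ for $x\in pPp$. Using $vp=v$ one computes $vuv^*=\theta(pup)\,vv^*$ for every $u\in\mathscr U(P)$, so $E_Q(vuv^*)=\theta(pup)E_Q(vv^*)$; combined with the dual identity $v^*\theta(y)=yv^*$ and a suitable choice of finitely many auxiliary elements of $M$, this yields the uniform lower bound $\sum_i\|E_Q(x_iuy_i)\|_2^2\geq C>0$ with $C$ depending only on $\tau(vv^*)$. The implication $(3)\Rightarrow(2)$ is then immediate by contrapositive: a net along which each $\|E_Q(x_iu_ny_i)\|_2$ vanishes would force the finite sum in $(3)$ to vanish as well.

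The main effort lies in $(2)\Rightarrow(4)$: I form the $\|\cdot\|_{2,\Tr}$-closed convex hull $K$ of $\{ue_Qu^*:u\in\mathcal G\}$ inside $L^2(\langle M,e_Q\rangle,\Tr)$; the identity $\Tr((ue_Qu^*)(xe_Qy))=\tau(E_Q(u^*x)yu)$ converts hypothesis $(2)$ into the statement $0\notin K$, since otherwise convex combinations tested against elements $xe_Qy$ would witness $\|E_Q(xu_ny)\|_2\to 0$ for all $x,y\in M$. The unique element $T\in K$ of minimal $\|\cdot\|_{2,\Tr}$-norm is then nonzero, positive, $P$-central (by construction of $K$), and of finite $\Tr$-trace, and its spectral projection $\mathbf{1}_{[\eps,\infty)}(T)$ for small $\eps>0$ is the required $f\in P'\cap\langle M,e_Q\rangle$ with $\Tr(f)<\infty$.

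Finally, for $(4)\Rightarrow(1)$, the standard structure theory of the basic construction identifies $fL^2(M)$ (for such a $P$-central projection $f$) with a finitely generated right $Q$-module on which the $P$-action yields a $\ast$-homomorphism $\pi:P\to p'M_n(Q)p'$ for some projection $p'\in M_n(Q)$; cutting down with a minimal projection in the center of $\pi(P)'\cap p'M_n(Q)p'$ and polar-decomposing an intertwining vector in $L^2(M)$ produces the partial isometry $v\in M$ together with $p\in P$, $q\in Q$ and a $\ast$-homomorphism $\theta:pPp\to qQq$ satisfying $\theta(x)v=vx$. The main obstacle is $(2)\Rightarrow(4)$: executing the convex-averaging argument rigorously inside the semifinite algebra $\langle M,e_Q\rangle$---in particular ensuring that the norm-minimizing element of $K$ actually lies in the $\Tr$-finite ideal rather than just in $L^2(\langle M,e_Q\rangle,\Tr)$---is a delicate $L^2$-analogue of Dixmier's averaging property and forms the heart of Popa's intertwining technique.
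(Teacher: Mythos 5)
The paper does not actually prove this statement --- it is cited verbatim from Popa \cite{Po03} --- so there is no in-paper argument to compare against; I'm assessing your sketch against the standard proofs. Your overall architecture (work inside $\langle M,e_Q\rangle$ with $\Tr$, run an $L^2$-convexity/minimal-norm argument, and read off the corner embedding from a finitely generated right $Q$-submodule of $L^2(M)$) is the correct one, and you rightly flag the delicacy of keeping the minimizer inside the $\Tr$-finite ideal. But two of your arrows have concrete gaps.

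For $(2)\Rightarrow(4)$: your $K$ is built only from $\{ue_Qu^*\}$ and you test against $xe_Qy$. The identity $\Tr\big((ue_Qu^*)(xe_Qy)\big)=\tau\big(E_Q(u^*x)yu\big)$ is correct, but for general $x,y$ this is not a squared norm; it only becomes $\|E_Q(xu)\|_2^2$ after specializing $y=x^*$. Thus $0\in K$ yields a net with the \emph{one-sided} vanishing $\|E_Q(xu_n)\|_2\to 0$, which is a priori stronger than the two-sided vanishing that $(2)$ negates --- so the desired implication ``$(2)\Rightarrow 0\notin K$'' does not follow. You are also missing the pigeonhole step that converts a vanishing convex combination $\sum_j\lambda_j\|E_Q(\cdot\, u_j\,\cdot)\|_2^2\to 0$ into a single unitary. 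The standard repair lets the convex set depend on the data: take $T=\sum_i y_ie_Qy_i^*$ (positive, $\Tr(T)<\infty$), set $K_T=\overline{\mathrm{conv}}^{\,\|\cdot\|_{2,\Tr}}\{uTu^*:u\in\mathscr U(P)\}$, and pair against $\sum_k x_k^*e_Qx_k$, which produces $\sum_j\lambda_j\sum_{i,k}\|E_Q(x_ku_jy_i)\|_2^2\to 0$; this proves $\neg(4)\Rightarrow\neg(3)$, and the elementary equivalence $(2)\Leftrightarrow(3)$ then closes the cycle.

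For $(1)\Rightarrow(3)$: the identity $vuv^*=\theta(pup)vv^*$ is correct but yields no lower bound on $\|E_Q(vuv^*)\|_2=\|\theta(pup)E_Q(vv^*)\|_2$; whenever $\tau_P(p)\le 1/2$ there are $u\in\mathscr U(P)$ with $pup=0$, killing the term entirely, so the phrase ``a suitable choice of finitely many auxiliary elements'' is hiding essentially all the work. The robust route is $(1)\Rightarrow(4)\Rightarrow(3)$: $v^*e_Qv\in\langle M,e_Q\rangle$ is positive with $\Tr(v^*e_Qv)=\tau(vv^*)>0$ and commutes with $pPp$; averaging it over finitely many partial isometries $w_j\in P$ with $w_j^*w_j\le p$ and $\sum_j w_jw_j^*$ equal to the central support of $p$ gives a $P$-central $\Tr$-finite positive element whose spectral projections furnish $(4)$. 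Then $(4)\Rightarrow(3)$ follows by approximating $f$ in $\|\cdot\|_{2,\Tr}$ by $\sum_i x_ie_Qy_i$ and using $\Tr(fufu^*)=\Tr(f)>0$ together with Cauchy--Schwarz. Your $(4)\Rightarrow(1)$ sketch is fine.
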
 
\vskip 0.02in
\noindent If one of the three equivalent conditions from Theorem \ref{corner} holds, then we say that \emph{ a corner of $ P$ embeds into $ Q$ inside $ M$}, and write $ P\prec_{ M} Q$. If we moreover have that $ P p'\prec_{ M} Q$, for any projection  $0\neq p'\in  P'\cap 1_{ P}  M 1_{ P}$, then we write $ P\prec_{ M}^{\rm s} Q$.

For further use, we recall several useful intertwining results for von Neumann subalgebras.

\begin{lemma}\label{lemma.elemetary.facts.intertwinining}
Let $(M,\tau)$ be a tracial von Neumann algebra and $P\subset pMp$, $Q\subset qMq$, $R\subset rMr$ be von Neumann subalgebras, where $p,q,r\in M$ are projections. Then the following hold.
\begin{enumerate}
    
    \item \emph{\cite[Lemma 2.4(2)]{DHI16}} If $Pz\prec_M Q$ for any non-zero projection $z\in \mathscr N_{pMp}(P)'\cap M$, then $P\prec_M^{\rm s} Q$.
    
    \item \emph{\cite[Lemma 2.4(3)]{DHI16}} Assume $P\prec_M Q$. Then there is a non-zero projection $z\in \mathcal N_{pMp}(P)'\cap M$ such that $Pz\prec_M^{\rm s} Q$.

    \item \emph{\cite[Lemma 3.5]{Va08}} If $P\prec_M Q$, then $Q'\cap qMq\prec_M P'\cap pMp$.
    
    \item \emph{\cite[Lemma 3.7]{Va08}} If $P\prec_M Q$ and $Q\prec_M^{\rm s} R$, then $P\prec_M R$.
    \item If $S\subseteq P$ is a finite index von Neumann subalgebra then $S\prec_M R$ if and only if $P \prec_M R$.
\end{enumerate}

\end{lemma}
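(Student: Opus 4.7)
The forward implication $P\prec_M R \Rightarrow S\prec_M R$ is immediate from Theorem \ref{corner}(4): since $S\subseteq P$ gives $P'\cap\langle M,e_R\rangle \subseteq S'\cap\langle M,e_R\rangle$, any non-zero projection in $P'\cap\langle M,e_R\rangle$ of finite $\Tr$-trace already lies in $S'\cap\langle M,e_R\rangle$ and witnesses $S\prec_M R$.

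For the reverse implication, my plan is to exploit $[P:S]<\infty$ via a Pimsner--Popa basis in order to ``average'' a witness from $S'\cap\langle M,e_R\rangle$ into one lying in $P'\cap\langle M,e_R\rangle$. First, invoke Theorem \ref{corner}(4) to obtain a non-zero projection $f\in S'\cap\langle M,e_R\rangle$ with $\Tr(f)<\infty$, and fix a finite Pimsner--Popa basis $m_1,\dots,m_k\in P$ for $P$ over $S$ satisfying
$$
y \;=\; \sum_{i=1}^k m_i E_S(m_i^* y) \;=\; \sum_{i=1}^k E_S(y\,m_i)\,m_i^* \qquad \text{for every } y\in P,
$$
the second identity being obtained from the first by taking adjoints. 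Then consider the positive operator
$$
T \;:=\; \sum_{i=1}^k m_i\, f\, m_i^{*} \;\in\; \langle M,e_R\rangle_{+}.
$$
The claim to establish is that $T$ is a non-zero element of $P'\cap\langle M,e_R\rangle$ with $\Tr(T)<\infty$, after which any non-zero spectral projection of $T$ provides the desired witness for $P\prec_M R$ via Theorem \ref{corner}(4).

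To verify $T\in P'\cap\langle M,e_R\rangle$, for each $x\in P$ I would apply the two basis identities to $xm_j\in P$ and $m_i^*x\in P$ to get $xm_j=\sum_i m_i E_S(m_i^*xm_j)$ and $m_i^*x=\sum_j E_S(m_i^*xm_j)m_j^*$. Using that each $E_S(m_i^*xm_j)$ lies in $S$ and hence commutes with $f$, both $xT$ and $Tx$ then reduce to the single expression $\sum_{i,j}m_i\, f\, E_S(m_i^*xm_j)\,m_j^*$. Finiteness of the trace comes from the cyclic property $\Tr(m_i f m_i^*)=\Tr(f m_i^*m_i)$, which gives $\Tr(T)\leq\bigl\|\sum_i m_i^*m_i\bigr\|\,\Tr(f)<\infty$. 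The main obstacle is the nondegeneracy $T\neq 0$, which I would handle by contradiction: if $T=0$ then positivity forces each $m_i f m_i^{*}=0$, hence $m_i f=0$ for every $i$; combining this with $f\in S'$ yields $m_i E_S(m_i^*)\,f=m_i f\,E_S(m_i^*)=0$, and summing over $i$ together with the identity $\sum_i m_i E_S(m_i^*)=1$ forces $f=0$, contradicting $f\neq 0$.
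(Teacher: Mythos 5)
The paper supplies no proof for item (5) — it is the only clause in the lemma without a citation, evidently regarded as folklore — so there is no official argument to compare against. Your proof is correct and is essentially the standard way to see this.

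The forward direction is fine: $S\subseteq P$ gives $P'\cap\langle M,e_R\rangle\subseteq S'\cap\langle M,e_R\rangle$, so a witness for $P\prec_M R$ in the sense of Theorem \ref{corner}(4) is automatically a witness for $S\prec_M R$.

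For the reverse direction, the only step you do not justify is the existence of a \emph{finite, bounded} Pimsner--Popa basis $m_1,\dots,m_k\in P$ for $P$ over $S$. This is guaranteed by \cite{PP86} when $S\subseteq P$ is a finite index inclusion of II$_1$ factors, which is the setting in which the paper actually invokes item (5); for a general finite (probabilistic) index inclusion of tracial von Neumann algebras one should at least note that a finite basis still exists (this goes back to Popa, see \cite{Po95}), but it is worth flagging as an input rather than treating it as automatic. Granting the basis, the averaging $T=\sum_i m_i f m_i^*$ is exactly the right object: your verification that $xT=Tx$ for $x\in P$ by rewriting $xm_j$ and $m_i^*x$ via the basis identities and pushing the $S$-coefficients past $f$ is correct, the cyclic trace estimate $\Tr(T)\le\bigl\|\sum_i m_i^*m_i\bigr\|\Tr(f)<\infty$ is correct (note each $m_i f m_i^*$ is trace-class so cyclicity is legitimate), and positivity of each summand together with $f=f^*=f^2$ gives $m_if=0$ from $m_ifm_i^*=0$.

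Two small points of hygiene. First, since the paper's standing convention is that $S\subseteq P$ is a unital inclusion, one has $1_S=1_P=p$, and the witnessing projection from $S\prec_M R$ can (and should) be taken with $f\le 1_S=p$, i.e. $f\in S'\cap p\langle M,e_R\rangle p$; this is what lets you conclude from $\sum_i m_iE_S(m_i^*)=p$ that $pf=f$, and hence that $T=0$ forces $f=0$. Your displayed basis identity should likewise read $\sum_i m_iE_S(m_i^*)=p$ rather than $1$. Second, one should observe $T=pTp$ (each $m_i\in pMp$), so $T$ lies in $P'\cap p\langle M,e_R\rangle p$, matching the form of witness in Theorem \ref{corner}(4). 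With these cosmetic fixes the argument is complete.

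As an aside, the same computation is transparently the bimodule statement: $S\prec_M R$ provides a non-zero $S$-$R$ sub-bimodule $\mathcal H\subset L^2(M)$ with $\dim_{-R}\mathcal H<\infty$, and since $P=\sum_i m_iS$ the bimodule $P\cdot\mathcal H=\sum_i m_i\mathcal H$ is a $P$-$R$ sub-bimodule with $\dim_{-R}(P\cdot\mathcal H)\le k\dim_{-R}\mathcal H<\infty$, which is $P\prec_M R$. Your $T$ is exactly the projection onto (a piece of) this enlarged sub-bimodule.
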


\begin{lemma}\emph{\cite[Lemma 2.3]{PP86}}\label{Theorem.PP86}
Let $N\subset M$ be an inclusion of tracial von Neumann algebras satisyfing $[M:N]<\infty$. The following hold:

\begin{enumerate}
    \item $M\prec_M^{\rm s}N$.
    
    \item If $\mathcal Z(N)$ is completely atomic, then $M\prec_M Nq$, for any non-zero projection $q\in N'\cap M$.
\end{enumerate}

\end{lemma}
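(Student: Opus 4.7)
The plan for part (1) is to invoke criterion (4) of Theorem \ref{corner}. Since $[M:N]<\infty$, the Jones basic construction $\langle M,e_N\rangle$ carries a faithful normal trace $\operatorname{Tr}$ characterized by $\operatorname{Tr}(xe_Ny)=\tau(xy)$ for $x,y\in M$, and this trace is finite with $\operatorname{Tr}(1)=[M:N]$. Given any non-zero projection $p'\in \mathcal Z(M)=M'\cap M$, the centrality of $p'$ in $M$ gives $p'\in (Mp')'\cap\langle M,e_N\rangle$, and $\operatorname{Tr}(p')\leq \operatorname{Tr}(1)<\infty$. Theorem \ref{corner}(4) then yields $Mp'\prec_M N$, and since $p'$ was arbitrary this is precisely $M\prec_M^{\rm s}N$.

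For part (2), the strategy is to reduce to a factor situation and then apply part (1) on a suitable corner. Since $\mathcal Z(N)$ is completely atomic, write $1=\sum_i p_i$ with the $p_i$ the minimal central projections of $N$, so that each $Np_i$ is a factor. Because $q\in N'\cap M$ is non-zero and commutes with every $p_i$, there is some index $i_0$ with $q_0:=qp_{i_0}\neq 0$. Proposition \ref{findex}(1) yields $[p_{i_0}Mp_{i_0}:Np_{i_0}]<\infty$, and since $Np_{i_0}$ is a factor with $q_0\in (Np_{i_0})'\cap p_{i_0}Mp_{i_0}$ non-zero, Proposition \ref{findex}(2) gives $[q_0Mq_0:Nq_0]<\infty$. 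Applying part (1) to this inclusion yields $q_0Mq_0\prec_{q_0Mq_0}Nq_0$ and a fortiori $q_0Mq_0\prec_M Nq_0$.

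To finish, unpack Theorem \ref{corner}(1) to obtain projections $p\in q_0Mq_0\subseteq M$ and $f\in Nq_0\subseteq Nq$, a $*$-homomorphism $\theta:p(q_0Mq_0)p\to f(Nq_0)f$, and a non-zero partial isometry $v\in M$ with $v^*v\leq p$, $vv^*\leq f$, and $\theta(x)v=vx$ for all $x\in p(q_0Mq_0)p$. Since $p\leq q_0$ one has $p(q_0Mq_0)p=pMp$, and $f(Nq_0)f\subseteq f(Nq)f$; hence the same triple $(p,f,\theta)$ together with $v$ verifies criterion (1) of Theorem \ref{corner} for the pair $M$ and $Nq$ inside $M$, giving $M\prec_M Nq$. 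The only delicate point I foresee is the transfer from intertwining inside the corner $q_0Mq_0$ to intertwining inside $M$, but this is automatic because the partial isometry and $*$-homomorphism supplied by Theorem \ref{corner}(1) already live in $M$, and the codomain $Nq_0$ is contained in $Nq$.
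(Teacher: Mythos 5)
Your proof is correct, and it is a sound, self-contained argument for both parts. Note that the paper itself does not supply a proof here; it simply refers the reader to \cite[Lemma 2.4]{CIK13} for item (1) and to \cite[Lemma 2.11]{Dr19b} for item (2), so there is no in-paper argument to compare against directly.

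For part (1) you invoke criterion (4) of Theorem~\ref{corner} with $f=p'$ and the finiteness of the Jones trace $\operatorname{Tr}$ on $\langle M,e_N\rangle$, which is valid. One small point worth being explicit about: criterion (4), as stated for not-necessarily-unital $P$, should be read with the understanding that the projection $f$ must lie under $1_P$ (otherwise, e.g.\ $f=1-1_P$ would be a spurious witness). Your choice $f=p'=1_{Mp'}$ respects this, so the application is correct. A slightly more common route, which sidesteps the discussion of non-unital $P$ in criterion (4), is to use a finite Pimsner--Popa basis $\{m_j\}$ with $\sum_j m_j e_N m_j^*=1$, giving $\sum_j\|E_N(m_j^*u)\|_2^2=\|u\|_2^2=\tau(p')>0$ for every $u\in\mathscr U(Mp')$, and then invoke criterion (3); both routes are standard. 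For part (2), the reduction to a minimal central projection $p_{i_0}$ of $N$, the finite-index transfer via Proposition~\ref{findex}(1)--(2), the application of part (1) on the corner $q_0Mq_0$, and the final transport of the intertwining data from $q_0Mq_0\prec_M Nq_0$ to $M\prec_M Nq$ (using $p\leq q_0$, hence $p(q_0Mq_0)p=pMp$, and $Nq_0\subseteq Nq$) are all correct.
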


For proofs of (1) and (2) we refer the reader to \cite[Lemma 2.4]{CIK13} and \cite[Lemma 2.11]{Dr19b}, respectively.

 We end this section with the following elementary lemma. For completeness, we also include a short proof and refer the reader to Definition \ref{definition.cocycle.action} for the notion of cocycle action.
\begin{lemma}\label{findexint} Let $\Gamma \curvearrowright^{\sigma,\alpha} A$ be a cocycle action and let $M= A \rtimes_{\sigma, \alpha}\Gamma $ the corresponding twisted crossed product von Neumann algebra. Then   $M \prec_M A$ if and only if $\Gamma$ is finite.  
    
\end{lemma}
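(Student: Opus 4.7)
The ``if'' direction is immediate: when $\Gamma$ is finite, $[M:A]=|\Gamma|<\infty$ using the Pimsner--Popa basis $\{u_g\}_{g\in\Gamma}$, so Lemma~\ref{Theorem.PP86}(1) gives $M\prec_M^{\rm s}A$, hence $M\prec_M A$.

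For the converse I would argue the contrapositive via condition~(2) of Theorem~\ref{corner}. Suppose $\Gamma$ is infinite, and consider
\begin{equation*}
\mathcal G:=\{au_g\,:\,a\in\mathscr U(A),\ g\in\Gamma\}\subset\mathscr U(M).
\end{equation*}
The cocycle identities $u_g b=\sigma_g(b)u_g$ and $u_g u_h=\alpha(g,h)u_{gh}$ (with $\alpha$-values unitary in $A$) show that $\mathcal G$ is closed under products and inverses, so it is a subgroup of $\mathscr U(M)$ with $\mathcal G''=M$. Fix an injective sequence $(g_n)_{n\ge 1}\subset\Gamma$ and put $v_n:=u_{g_n}\in\mathcal G$; the task reduces to proving $\|E_A(xv_ny)\|_2\to 0$ for all $x,y\in M$.

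For finite sums $x=\sum_{h\in F_1}x_h u_h$ and $y=\sum_{k\in F_2}y_k u_k$ in the algebraic crossed product, a direct expansion using $E_A(u_g)=\delta_{g,e}$ shows that $E_A(xv_ny)$ reduces to a sum indexed by the pairs $(h,k)\in F_1\times F_2$ with $hg_nk=e$; since $(g_n)$ is injective and $\{h^{-1}k^{-1}:h\in F_1,k\in F_2\}$ is finite, this sum is empty once $n$ is large, so $E_A(xv_ny)=0$ eventually. To pass to general $x,y\in M$, I would approximate by norm-bounded elements $\tilde x,\tilde y$ of the algebraic crossed product (via Kaplansky density) and use the standard estimate
\begin{equation*}
\|E_A(xv_ny)-E_A(\tilde xv_n\tilde y)\|_2\le\|y\|_\infty\|x-\tilde x\|_2+\|\tilde x\|_\infty\|y-\tilde y\|_2,
\end{equation*}
which yields the desired vanishing uniformly in $n$ and thereby violates condition~(2) of Theorem~\ref{corner}. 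The only step requiring any care is this last approximation argument, where the operator-norm control supplied by Kaplansky density is essential; everything else is a soft consequence of the cocycle calculus and requires no deformation/rigidity input.
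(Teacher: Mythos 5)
Your proof is correct and follows essentially the same route as the paper's: the forward direction via finite index plus Lemma~\ref{Theorem.PP86}(1), and the converse via a Kaplansky-density reduction to the observation that $E_A$ of a monomial $au_hu_gbu_k$ vanishes unless $hgk=e$. The only difference is bookkeeping — you argue the contrapositive using condition~(2) of Theorem~\ref{corner} with an explicit injective sequence $(u_{g_n})$, whereas the paper argues directly via condition~(3) and concludes $\Gamma\subseteq\{g_i^{-1}h_i^{-1}\}$ is finite; also, since $P=M$ here, your auxiliary group $\mathcal G$ is unnecessary (one could simply take $\mathcal G=\mathscr U(M)$), though it is harmless.
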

\begin{proof}
    If $\Gamma$ is finite then $A\subseteq M$ has finite index and thus Lemma \ref{Theorem.PP86}(1) implies that $M\prec_M A$. 
    
    To see the converse assume $M\prec_M A$. Then part (3) in Theorem \ref{corner} implies the existence of finitely many $x_1, \ldots ,x_n,  y_1, \ldots ,y_n\in M$ and $C>0$ such that $\sum^k_{i=1} \|E_A(x_i u_g y_i)\|^2_2\geq C,\text{ for all }g\in \Gamma$. Approximating $x_i$ and $y_i$, via the Kaplansky Density Theorem, by elements in the $\ast$-algebra generated by the linear span of $A\Gamma$, this inequality further implies the existence of an integer $l\geq k$, a constant $C'>0$, and  $x'_i=a_i u_{g_i}$, $y'_i= b_i u_{h_i}$, where $a_i,b_i \in A$ and $g_i,h_i \in\Gamma$  for all $i\in \overline{1,l}$, such that \begin{equation}\label{ineqint} \sum^l_{i=1} \|E_A(x'_i u_g  y'_i)\|^2_2\geq C',\text{ for all }g\in \Gamma.
    \end{equation}

Now notice that $E_A(x'_i u_g y'_i)= a_i E_A(u_{g_i} u_g u_{h_i}) \sigma_{h_i^{-1}}(b_i)= a_i E_A( \alpha(g_i,g)\alpha(g_ig,h_i)u_{g_igh_i}) \sigma_{h_i^{-1}}(b_i) = \tau(u_{g_ig h_i}) a_i \alpha(g_i,g)\alpha(g_ig,h_i)\sigma_{h_i^{-1}}(b_i)$. Since $\alpha(g_i,g)\alpha(g_ig,h_i)\in \mathscr U(A)$, basic estimates further imply that  $\|E_A(x'_ix y'_i)\|_2\leq \|a_i\|_\infty \|b_i\|_\infty |\tau(u_{g_igh_i}) |= \delta_{g_igh_i,1}  \|a_i\|_\infty \|b_i\|_\infty$. Combining this with relation \eqref{ineqint} we get  \begin{equation*}\sum^l _{i=1} \delta_{g_igh_i,1} \geq \frac{C'}{ \max^l_{i=1} \|a_i\|_\infty \|b_i\|_\infty }>0.\end{equation*} 
This implies $\Gamma \subseteq \{g_i^{-1}h_i^{-1} \,:\, i\in \overline{1,l}\}$, entailing $\Gamma$ is finite.  \end{proof}

\subsection{Relative amenability and weak containment of bimodules}\label{section.amenable}

A  tracial von Neumann algebra $(M,\tau)$ is {\it amenable} if
there exists a net $(\xi_n)_n\in L^2(M)\otimes L^2(M)$ such that $\langle x\xi_n,\xi_n \rangle\to \tau(x)$ and $\|x\xi_n-\xi_n x \|_2\to 0$, for all $x\in M$.
 Connes' celebrated classification of amenable factors \cite{Co76} shows that $M$ is amenable if and only if $M$ is approximately finite dimensional.
 
Next, we recall the notion of relative amenability introduced by 
Ozawa and Popa in \cite{OP07}. Let $(M,\tau)$ be a  tracial von Neumann algebra. Let $p\in M$ be a projection and $P\subset pMp,Q\subset M$ be von Neumann subalgebras. Following \cite[Section 2.2]{OP07}, we say that $P$ is {\it amenable relative to $Q$ inside $M$} if
there exists a net $(\xi_n)_n\in L^2(\langle M,e_Q \rangle)$ such that $\langle x\xi_n,\xi_n \rangle\to \tau(x)$ for all $x\in pMp$ and $\|y\xi_n-\xi_n y \|_2\to 0$, for all $y\in P$.
It is a fact that $P$ is amenable relative to $\mathbb C$ inside $M$ if and only if $P$ is amenable.

Let $M,N$ be tracial von Neumann algebras.  An $M$-$N$-bimodule $_M\mathcal H_N$ is a Hilbert space $\mathcal H$ equipped with two commuting normal unital $*$-homomorphisms $M\to \mathbb B(\mathcal H)$ and $N^{\rm op}\to \mathbb B(\mathcal H)$. If $M=N$, we say, for simplicity, that $_M\mathcal H_M$ is an $M$-bimodule.  For two bimodules $_M\mathcal H_N$ and $_M\mathcal K_N$, we say that $_M\mathcal H_N$ is weakly contained in $_M\mathcal K_N$ if for any $\epsilon>0$, finite sets $F\subset M, G\subset N$ and $\xi\in\mathcal H$, there exist $\eta_1,\dots,\eta_n\in \mathcal K$ such that 
$$
|\langle x\xi y, \xi \rangle -\sum_{i=1}^n  \langle x\eta_i y, \eta_i \rangle| \leq \epsilon,\; \text{ for all } x\in F \text{ and } y\in G. 
$$
Examples of bimodules include the trivial $M$-bimodule $_ML^2(M)_M$ and the coarse $M$-$N$-bimodule $_ML^2(M)\otimes L^2(N)_N$.

Finally, 
it is a fact that a subalgebra $P\subset pMp$ is amenable relative to $Q$ inside $M$ if and only if $_P L^2(pM)_M$ is weakly contained in $_P L^2(\langle M,e_{Q} \rangle)_M$ (see for instance \cite[Theorem 13.4.4]{AP22}).

\section{Popa's Deformation/Rigidity Theory}\label{section.popa.deformation.rigidity}

\subsection{Deformations} 
In the last two decades, Popa's deformation/rigidity theory has led to unprecedented progress in the theory of von Neumann algebras, see the survey papers \cite{Po07,Va10,Io12,Io17}. 
Popa's theory is centered on the remarkable idea of using deformations of II$_1$ factors to locate rigid subalgebras. In Theorem \ref{solid} we will illustrate this principle by presenting Popa's proof \cite{Po06b} of Ozawa's  solidity result \cite{Oz03} for the free group factors. 
First, we make precise the notion of a deformation.
 
\begin{definition} A {\it deformation} of the identity of a tracial von Neumann algebra $(M,\tau)$ is a sequence  of unital, trace preserving, completely positive maps $\phi_n:M\to M$ satisfying $\|\phi_n(x)-x \|_2\to 0$, for all $x\in M.$

A linear map $\phi:M\to M$ is called {\it completely positive} if the amplification $\phi^{(m)}: \mathbb M_m(\mathbb C)\to \mathbb M_m(\mathbb C)$ given by $\phi^{(m)}([x_{i,j}])=[\phi(x_{i,j})]$ is positive, for any $m\ge 1$.
\end{definition}

Before presenting some examples of deformations, we recall some terminology. Let $(M,\tau)$ be a tracial separable von Neumann algebra. A map $\phi: M\to M$ is called {\it subunital} if $\phi(1)\leq 1$ and {\it subtracial} if $\tau\circ\phi\leq \tau$. For a von Neumann subalgebra $P\subset M$, the map $\phi$ is called {\it $P$-bimodular} if $\phi(axb)=a\phi(x)b$, for all $a,b\in P$ and $x\in M$.
Let $P\subset M$ be a von Neumann subalgebra. Following \cite[Section 2]{Po01b}, we say that $M$ has {\it the Haagerup property  relative} to $P$ if there exists a sequence of normal subunital subtracial completely positive $P$-bimodular maps $\phi_n:M\to M$ such that $\| \phi_n(x)-x \|_2\to 0$, for any $x\in M$ and such that  every $\phi_n$ satisfies the following relative compactness property: if $(w_k)_k$ is a sequence of unitaries in $M$ satisfying $\|E_P(aw_ny)\|_2\to 0$, for all $a,b\in M$, then $\| \phi_n(w_k) \|_2\to 0$ when $k\to\infty$.

Next, we continue by presenting two examples of deformations. For a comprehensive list of examples, we
refer the reader to \cite[Section 3]{Io12} (see also \cite[Section 3]{Io17}).
The first one was used by Popa by using the above Haagerup notion to show that certain von Neumann algebras have trivial fundamental group \cite{Po01b}, thereby solving a longstanding question in operator algebras.

\begin{example}
Let $\Gamma\curvearrowright (X,\mu)$ be a probability measure preserving action of a countable group and let $\varphi_n: \Gamma\to\mathbb C$ be a sequence of positive definite functions such that $\varphi_n(g)\to 1$, for all $g\in\Gamma$. Then the sequence $\phi_n:L^\infty(X)\rtimes \Gamma\to L^\infty(X)\rtimes \Gamma$ given by $\phi_n(au_g)=\varphi_n(g)au_g$ is a deformation of the identity of $L^\infty(X)\rtimes\Gamma$.

If $\Gamma$ has the Haagerup property \cite{Ha79}, one can choose $\varphi_n:\Gamma\to\mathbb C$ to satisfy $\varphi_n\in c_0(\Gamma)$, for any $n$. In addition, if $\Gamma=SL_2(\mathbb Z)$, then the the sequence $\phi_n$ is a deformation of the II$_1$ factor $M=L^\infty(\mathbb T^2)\rtimes SL_2(\mathbb Z)$ which witnesses the fact that $M$ has the Haagerup property relative to $L^\infty(\mathbb T^2)$. 
This fact is an essential ingredient in Popa's proof that $M$ has trivial fundamental group.
\end{example}


Now, we now turn our attention to the foundational notions of {\it malleable} and {\it s-malleable deformations} introduced by Popa in \cite{Po01b, Po03} which consists of a special path of completely positive maps. This
novel concept was used with great effect in classifying large classes of factors.

\begin{definition}\label{def:malleable}
Let $(M,\tau)$ be a tracial von Neumannn algebra. A pair $(\tilde M, (\alpha_t)_{t\in\mathbb R})$ is called a {\it malleable deformation} of $M$ if the following conditions hold:
\begin{enumerate}
    \item $(\tilde M,\tilde\tau)$ is a tracial von Neumann algebra such that $M\subset\tilde M$ and $\tau=\tilde\tau_{|M},$ 
    
    \item $(\alpha_t)_{t\in\mathbb R}\subset {\text Aut}(\tilde M,\tilde\tau)$ is a $1$-parameter group with $\lim_{t\to 0}\|\alpha_t(x)-x\|_2=0$, for any $x\in\tilde M,$ 
    
    \item $\alpha_t$ does not converge uniformly to the identity on $(M)_1$ as $t\to 0$.

\end{enumerate}

In addition, if also the following condition holds

\begin{enumerate}
    \item [(4)] There exists $\beta\in {\text Aut}(\tilde M,\tilde\tau)$ that satisfies $\beta_{|M}=\text{Id}_M$, $\beta^2=\text{Id}_{\tilde M}$ and $\beta\alpha_t=\alpha_{-t}\beta$, for any $t\in\mathbb R$.

\end{enumerate}

then $(\tilde M, (\alpha_t)_{t\in\mathbb R},\beta)$ is called an {\it s-malleable deformation} of $M$.

\end{definition}


There are several natural classes of von Neumann algebras that admit (s-)malleable deformations:
\begin{itemize}
    \item Free group factors, and more generally, amalgamated free product von Neumann algebras \cite{Po86, Po06b, IPP05}.

    \item HNN extensions von Neumann algebras \cite{FV10}.

    \item Wreath products von Neumann algebras \cite{Po03, Po05, Io06}.

    \item Von Neumann algebras that are constructed from trace preserving action of groups that admit unbounded cocycles valued into various orthogonal representations,
 \cite{Si10}.

    \item Von Neumann algebras that admit certain unbounded closable derivations \cite{DI12}.
\end{itemize}


Our second deformation that we concretely exemplify is the malleable deformation of free groups factors which was used by Popa \cite{Po06b} to prove Ozawa's solidity result \cite{Oz03} for free group factors.

\begin{example}\label{example.deformation.free.groups}
Free group factors admit natural malleable deformations  as follows \cite{Po06b}. For simplicity, we recall the deformation of the group factor $M=L(\mathbb F_n)$ with $2\leq n\leq\infty$.
Let $\{a_k\}_{k=1}^n\cup \{b_k\}_{k=1}^n$ be the generators of $\mathbb F_{2n}$ and denote  
$\tilde M=M*M$.
By viewing $\mathbb F_n$ as the subgroup of $\mathbb F_{2n}$ generated by $\{a_k\}_{k=1}^n$, we obtain an embedding of $M$
inside  $\tilde M$. We denote still by $\{a_k\}_{k=1}^n\cup \{b_k\}_{k=1}^n$ the canonical unitaries in $\tilde M$. We now construct a 1-parameter group of automorphisms $(\alpha_t)_{t\in\mathbb R}$ of $\tilde M$ as follows. Let $\{h_k\}_{k=1}^n\subset\tilde M$ be hermitian elements such that $b_k=\exp (ih_k)$, for any $k\in\overline{1,n}$.

For every $t\in\mathbb R$, we define a trace preserving automorphism $\alpha_t$ of $\tilde M$ by letting:
$$
\alpha_t(a_k)=\exp(ith_k)a_k, \;\;\; \alpha_t(b_k)=b_k, \;\;\;\text{for every }k\in\overline{1,n}.
$$

In fact, $M$ admits an s-malleable deformation by defining a trace preserving automorphism $\beta$ of $\tilde M$ by letting:
$$
\beta(a_k)=a_k, \;\;\; \beta(b_k)=b_k^*, \;\;\;\text{for every }k\in\overline{1,n}.
$$
\end{example}


\subsection{Deformation vs. rigidity}

An important concept in deformation/rigidity theory and a main source of rigidity is the notion of property (T) for von Neumann algebras introduced by Connes and Jonnes \cite{CJ85} and its relative version  for inclusions of von Neumann algebras defined by Popa \cite{Po01b}.

\begin{definition}
An inclusion of tracial von Neumann algebras $P\subset M$ has the {\it relative property (T)} if any deformation $\phi_n:M\to M$ of the identity must converge uniformly to the identity on the unit ball of  $P$.
A tracial von Neumann algebra $M$ has {\it property (T)} if the inclusion $M\subset M$ has relative property (T).
\end{definition}

Next, we exemplify why relative property (T) should be thought of as rigidity by noticing that any property (T) von Neumann algebra $M$ cannot have ``lots" of deformations. For instance, if $M$ has the Haagerup property relative to $\mathbb C$, then $M$ is not diffuse. Indeed, using the Haagerup property we let $\phi_n:M\to M$ be a sequence of normal subunital subtracial completely positive maps such that $\|\phi_n(x)-x\|_2\to 0$, for any $x\in M$, and every $\phi_n$ satisfies $\| \phi_n(w_k) \|_2\to 0$ for any  sequence of unitaries $(w_k)_k$ in $M$  that weakly goes to $0$. Since property (T) can also be characterized by using completely positive maps that are only subunital and subtracial (see \cite{Po01b}), we obtain that $\phi_n$ must converge uniformly to the identity on the unit ball of $M$. Hence, there is $n_0\ge 1$ such that $\|\phi_{n_0}(x)-x \|_2\leq 1/2$ for any $x\in\mathcal U(M)$. If we assume by contradiction that $M$ is not diffuse, then there is a sequence of unitaries $(w_k)_k$ in $M$  that weakly goes to $0$, and hence, $\| \phi_{n_0}(w_k) \|_2\to 0$. This gives the contradiction that $1=\lim_{k\to\infty} \| \phi_{n_0}(w_k)-w_k \|_2\leq 1/2.$

The natural correspondence between completely positive maps and bimodules provides a  reformulation of relative property (T) in terms of bimodules. 
First, recall that if $\mathcal H$ is an $M$-bimodule, then a vector $\xi\in  M$ is  $P${\it-central}
 if $x\xi=\xi x$, for all $x\in P$, and   {\it tracial} if $\langle x\xi,\xi \rangle= \langle \xi x, \xi \rangle=\tau(x)$, where $M$ is endowed with the trace $\tau$. A net of vectors $(\xi_n)_n\subset \mathcal H$ is called {\it $P$-almost central} if $\| x\xi_n-\xi_n x \|_2\to 0$, for any $x\in P$. Following \cite{Po01b}, the inclusion $P\subset M$ has relative property (T) if and only if any $M$-bimodule without central $P$-vectors does not admit a net $(\xi_n)_n$ of tracial, $M$-almost central vectors. Hence, relative property (T) requires that all $M$-bimodules satisfy a certain spectral gap condition.

A remarkable discovery of Popa \cite{Po06a,Po06b} is that one can obtain rigidity by only using that certain bimodules have spectral gap. For instance, if $P\subset M$ is a non-amenable subalgebra, then there does not exist a net $(\xi_n)_n\in L^2(M)\otimes L^2(M)$ of tracial, $P$-almost central vectors; this follows from the fact that $P$ is non-amenable relative to $\mathbb C$ inside $M$, see Section \ref{section.amenable}. 
To explain the spectral gap terminology, we note that Connes' theorem \cite{Co76} gives that a II$_1$ factor $P$ is non-amenable if and only if there exist a finite set $S\subset P$ and $k>0$ such that 
$$
\|\xi\|_2\leq  k \sum_{y\in S} \| y\xi-\xi y\|_2, \;\; \text{ for all }\xi\in L^2(P)\otimes L^2(P).
$$
This principle of using spectral gap from non-amenable subalgebras is illustrated below in the proof of Theorem \ref{solid}.


\subsection{Solidity of Free Group Factors} 

In this section we present Popa's  deformation/rigidity theory proof \cite{Po06b} of Ozawa's solidity result for the free group factors.

\begin{theorem} \emph{\cite{Oz03}}\label{solid}
Let $M=L(\mathbb F_n)$, for some $2\leq n\leq\infty$. Then $M$ is {\it solid}, i.e., for every diffuse von Neumann subalgebra $A\subset M$, the relative commutant $A'\cap M$ is amenable. 
\end{theorem}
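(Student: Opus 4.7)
The plan is to execute Popa's spectral gap argument inside the s-malleable deformation of Example~\ref{example.deformation.free.groups}. Assume for contradiction that $A \subset M = L(\mathbb F_n)$ is diffuse but that $B := A' \cap M$ is non-amenable, and work inside $\tilde M = M * M$ equipped with the $1$-parameter group $(\alpha_t)_{t\in\R}$ and the involution $\beta$ described there.

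The starting point is the standard free-product computation that $\mathcal K := L^2(\tilde M) \ominus L^2(M)$, viewed as an $M$-$M$-bimodule, decomposes as a direct sum of copies of the coarse bimodule $L^2(M)\otimes L^2(M)$. Since $B \subset M$ is non-amenable, the bimodule characterization of relative amenability recalled in Section~\ref{section.amenable} produces a finite set $F \subset \mathscr U(B)$ and a constant $C>0$ such that
\[
\|\xi\|_2 \le C \sum_{y\in F} \|y\xi - \xi y\|_2, \qquad \xi \in \mathcal K.
\]

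The second step exploits the commutation $[A,B]=0$. For $u \in \mathscr U(A)$ and $y \in F$, the identity $\alpha_t(y)\alpha_t(u) = \alpha_t(u)\alpha_t(y)$ gives
\[
[y,\alpha_t(u)] = (y-\alpha_t(y))\alpha_t(u) + \alpha_t(u)(\alpha_t(y)-y),
\]
so $\|[y,\alpha_t(u)]\|_2 \le 2\|y - \alpha_t(y)\|_2$. Setting $\xi_t(u) := \alpha_t(u) - E_M(\alpha_t(u)) \in \mathcal K$ and noting that the orthogonal projection onto $\mathcal K$ is $M$-bimodular, the spectral gap inequality above yields
\[
\sup_{u \in \mathscr U(A)}\|\alpha_t(u) - E_M(\alpha_t(u))\|_2 \le 2C \sum_{y\in F}\|y - \alpha_t(y)\|_2 \xrightarrow[t\to 0]{} 0.
\]
Popa's transversality identity for s-malleable deformations (a short application of $\beta$ from Definition~\ref{def:malleable}), namely $\|\alpha_{2t}(x)-x\|_2 \le 2\|\alpha_t(x) - E_M(\alpha_t(x))\|_2$, then upgrades this to genuine uniform convergence $\alpha_t \to \mathrm{id}$ on the unit ball $(A)_1$.

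The final step, which I expect to be the main obstacle, is to convert this uniform convergence into a contradiction with the diffuseness of $A$. I would invoke Popa's rigidity-to-intertwining transfer principle: uniform smallness of $\alpha_t - \mathrm{id}$ on $(A)_1$ allows one to produce, via a compactness/averaging argument in $\tilde M$, a nonzero partial isometry realizing an embedding of a corner of $A$ into the $\alpha$-fixed subalgebra of $\tilde M$, which in this deformation is precisely the second free-product factor $M' := \{b_1,\ldots,b_n\}''$. In the language of Theorem~\ref{corner}, this reads $A \prec_{\tilde M} M'$. Since $M$ and $M'$ sit in free position inside $\tilde M = M * M'$, a standard freeness argument using Theorem~\ref{corner}(4) shows that any such intertwiner must have finite-dimensional source, forcing $A$ to be completely atomic and contradicting diffuseness. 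The resulting contradiction establishes the amenability of $B$.
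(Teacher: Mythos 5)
Your proposal follows a genuinely different route from the paper's. The paper proves the statement directly: diffuseness of $A$ gives a weakly null sequence $(u_k)\subset\mathscr U(A)$, Lemma~\ref{compact} gives $\|E_M(\alpha_{t_n}(v_n))\|_2\to 0$ along a suitable diagonal subsequence, and the resulting vectors $\xi_n=\alpha_{t_n}(v_n)-E_M(\alpha_{t_n}(v_n))\in L^2(\tilde M)\ominus L^2(M)$ are simultaneously tracial and $B$-almost-central, which exhibits the trivial $B$-bimodule as weakly contained in the coarse one and hence $B$ as amenable. You instead argue by contradiction: assume $B$ non-amenable, extract a spectral gap inequality on $\mathcal K=L^2(\tilde M)\ominus L^2(M)$, use $[A,B]=0$ together with Popa's transversality to force uniform convergence $\alpha_t\to\mathrm{id}$ on $(A)_1$, and then try to derive a contradiction with diffuseness. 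Steps two through four of your outline are correct: the spectral gap inequality on $\mathcal K$ does follow from non-amenability of $B$ via the weak containment $\,_B\mathcal K_B\prec{}_BL^2(B)\otimes L^2(B)_B$ and the $\oplus_\infty$-absorption of the coarse bimodule, the commutator estimate $\|[y,\xi_t(u)]\|_2\le 2\|y-\alpha_t(y)\|_2$ is correct (using $M$-bimodularity of $E_M$), and the transversality inequality is standard for s-malleable deformations.

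The gap is in your final step. The standard ``rigidity-to-intertwining'' transfer does not produce an embedding of $A$ into the $\alpha$-fixed subalgebra: from uniform convergence on $(A)_1$ one obtains, by the averaging/polar-decomposition argument, a nonzero partial isometry $v\in\tilde M$ with $\alpha_{t_0}(x)v=vx$ for $x$ in a corner of $A$, which reads $A\prec_{\tilde M}\alpha_{t_0}(M)$, \emph{not} $A\prec_{\tilde M}\tilde M^\alpha$. The identification $\tilde M^\alpha=M'$ is plausible but requires a free-probability computation that you do not supply, and even granting it, passing from $A\prec_{\tilde M}\alpha_{t_0}(M)$ to $A\prec_{\tilde M}M'$ is exactly the sort of location result that, in the amalgamated free product setting of \cite{IPP05}, requires a nontrivial argument about quasi-normalizers. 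Fortunately there is a much shorter closure available to you, and it is the one the paper's Lemma~\ref{compact} was designed for: once $\sup_{u\in\mathscr U(A)}\|\alpha_t(u)-u\|_2<\tfrac12$ for some fixed $t>0$, pick, using diffuseness of $A$, a sequence $(u_n)\subset\mathscr U(A)$ converging weakly to $0$; then $\|E_M(\alpha_t(u_n))\|_2\ge\|u_n\|_2-\|\alpha_t(u_n)-u_n\|_2>\tfrac12$ for all $n$, contradicting Lemma~\ref{compact}. I would replace your intertwining/freeness discussion with this two-line argument; the rest of the proposal then becomes a complete and correct (though less economical than the paper's) deformation/rigidity proof of solidity.
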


The proof uses the {free malleable deformation} $(\tilde M, (\alpha_t)_{t\in\mathbb R})$ from Example \ref{example.deformation.free.groups}. Recall that $\tilde M=M*M$. 
We will use the fact that the contraction $E_M\circ\alpha_t :L^2(M)\to L^2(M)$ is a compact operator, for any $t>0$.
In fact, the following weaker property will suffice for our purposes:

\begin{lemma}\label{compact}
If $(u_n)_n\subset\mathcal U(M)$ is a sequence that  converges weakly to $0$, then for any $t>0$ we have $\lim_{n\rightarrow\infty}\|E_M(\alpha_t(u_n))\|_2=0$.
\end{lemma}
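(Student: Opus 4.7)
The plan is to prove the stronger fact that $E_M\circ\alpha_t:L^2(M)\to L^2(M)$ is a compact operator; Lemma \ref{compact} follows because compact operators map weak-null sequences to norm-null ones, and a weakly null sequence of unitaries in $M$ is automatically weakly null in $L^2(M)$. The compactness will be obtained by diagonalizing $E_M\circ\alpha_t$ on the orthonormal basis $\{u_g:g\in\F_n\}$ of $L^2(M)$ and showing that the eigenvalues decay geometrically in the word length $|g|$.

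\textbf{Key identity.} Write $\tilde M=N_1*N_2$ with $N_1=M$ the first free copy and $N_2$ the second, set $v_k:=\exp(ith_k)\in N_2$, and (for definiteness) choose the $h_k$ with uniform spectrum on $(-\pi,\pi]$, so that $c(t):=\tau(v_k)=\sin(\pi t)/(\pi t)$ is independent of $k$ and satisfies $|c(t)|<1$ for every $t>0$. Letting $w_k:=v_k-c(t)\in N_2$, which has trace zero, one has
\[
\alpha_t(a_k)=c(t)\,a_k+w_k a_k,\qquad \alpha_t(a_k^{-1})=c(t)\,a_k^{-1}+a_k^{-1}w_k^{*}.
\]
The identity to prove is that, for every reduced word $g=a_{k_1}^{\epsilon_1}\cdots a_{k_m}^{\epsilon_m}\in\F_n$,
\[
E_M(\alpha_t(u_g))=c(t)^m\,u_g.
\]

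\textbf{Proof of the identity.} Expand $\alpha_t(u_g)=\prod_{i=1}^{m}\alpha_t(a_{k_i}^{\epsilon_i})$ into $2^m$ summands. The ``all-diagonal'' summand contributes precisely $c(t)^m u_g$. In every other summand, once adjacent factors are gathered into $N_1$- and $N_2$-blocks, one obtains an alternating word between $N_1$ and $N_2$. Each $N_1$-block is a product of consecutive letters of $g$, and any such subword of a reduced word is itself reduced and non-trivial in $\F_n$, hence centered in $N_1$. Each $N_2$-block consists of at most two centered factors $w_{k_j}^{\pm}$; when it has length two, i.e.\ when two neighboring positions $i,i+1$ contribute remainders with signs $(-,+)$, the fact that $g$ is reduced forces $k_i\ne k_{i+1}$, so the block is a product of centered elements from freely independent copies of $L(\Z)$ inside $N_2$ and is therefore itself centered. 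By the free-product/Fock-space decomposition of $L^2(N_1*N_2)$, the closed span of such alternating centered words containing at least one $N_2$-letter is orthogonal to $L^2(N_1)=L^2(M)$, so $E_M$ annihilates every non-diagonal summand.

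\textbf{Conclusion and obstacle.} Granted the identity, $E_M\circ\alpha_t$ acts on the basis $\{u_g\}$ as $u_g\mapsto c(t)^{|g|}u_g$, with eigenvalues tending to $0$; for $n<\infty$ each eigenvalue has finite multiplicity, so the operator is compact and Lemma \ref{compact} follows from $u_n\to 0$ weakly in $L^2(M)$. Alternatively, one extracts the convergence directly: writing $u_n=\sum_g\tau(u_n u_g^*)u_g$,
\[
\|E_M(\alpha_t(u_n))\|_2^2=\sum_g|\tau(u_n u_g^*)|^2|c(t)|^{2|g|}\le\sum_{|g|\le N}|\tau(u_n u_g^*)|^2+|c(t)|^{2N},
\]
so given $\eps>0$ one picks $N$ with $|c(t)|^{2N}<\eps/2$ and then lets $n\to\infty$, using that $\tau(u_n u_g^*)\to 0$ for each fixed $g$ and that $\{g:|g|\le N\}$ is finite (for $n<\infty$) to drive the first term below $\eps/2$. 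The main technical obstacle is the combinatorial verification of the identity: one must check that in every non-diagonal summand the reduced assumption on $g$ forces both the non-triviality of each merged $N_1$-block and the centeredness of each merged $N_2$-block, at which point the standard orthogonality of alternating centered words in a free product closes the argument.
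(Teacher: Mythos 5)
Your proof is correct and follows the same approach as the paper: both establish the diagonalization $E_M(\alpha_t(u_g))=\rho(t)^{|g|}u_g$ and then estimate the Fourier-coefficient tail of $u_n$ by splitting at word length $N$. You add useful detail on the free-product combinatorics justifying the eigenvalue identity (the paper asserts it directly from $\tau(\exp(ith_k))=\rho(t)$), and you correctly flag in passing the restriction to $n<\infty$ needed for $\{g\in\mathbb F_n : |g|\le N\}$ to be finite, a hypothesis the paper invokes without comment.
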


{\it Proof.}
Denote $\rho(t)=\frac{\sin(\pi t)}{\pi t}$ and notice that $0<\rho(t)<1$, for all $t>0$. Note that $\exp(i h_k)$ can be seen as the canonical generating unitary of $L(\mathbb Z)$ for any $k\in\overline{1,n}$. Thus, by identifying $L(\mathbb Z)$ with $L^\infty(\mathbb T)$ we derive that 
\begin{equation*}\label{trace}
\tau(\exp(ith_k))=\frac{1}{2\pi}\int_{-\pi}^{\pi}\exp(it\theta)\;\text{d}\theta=\rho(t), \;\;\;\; \text{ for all }t>0 \text{ and }k\in \overline{1,n}.
\end{equation*}
This implies that
\begin{equation}\label{sss1}
E_M(\alpha_t(u_g))=\rho(t)^{|g|}u_g,\;\;\; \text{for all } t\in\mathbb R \text{ and } g\in\mathbb F_n.    
\end{equation}
Here,  $|g|$ denotes the {word length} of $g\in\mathbb F_n$ with respect to the generating set $\{a_k\}_{k=1}^n$.

Consider the Fourier expansion $u_n=\sum_{g\in\mathbb F_n}\tau(u_nu_g^*)u_g$. Then by formula \ref{sss1} we get that $E_M(\alpha_t(u_n))=\sum_{g\in\mathbb F_n}\rho(t)^{|g|}\tau(u_nu_g^*)u_g$ and thus $\|E_M(\alpha_t(u_n))\|_2^2=\sum_{g\in\mathbb F_n}\rho(t)^{2|g|}|\tau(u_nu_g^*)|^2.$ Thus, if $N\geq 1$ is an integer, then using that $\sum_{g\in\mathbb F_n}|\tau(u_nu_g^*)|^2=\|u_n\|_2^2=1$, we get that $$\|E_M(\alpha_t(u_n))\|_2^2\leq \sum_{|g|\leq N}|\tau(u_nu_g^*)|^2+\rho(t)^{2N}\sum_{|g|\geq N}|\tau(u_nu_g^*)|^2\leq \sum_{|g|\leq N}|\tau(u_nu_g^*)|^2+\rho(t)^{2N}.$$
Since the set $\{g\in F_n \,:\,|g|\leq N\}$ is finite and $\lim_{n\rightarrow\infty}\tau(u_nu_g^*)=0$, for all $g\in\mathbb F_n$, we conclude that $\limsup_{n\rightarrow\infty}\|E_M(\alpha_t(u_n))\|_2^2\leq\rho(t)^{2N}$, for all $N\geq 1$. Since $0<\rho(t)<1$, the conclusion of the corollary follows.
\hfill$\blacksquare$

The source of rigidity in the proof of Theorem \ref{solid} is given by the following spectral gap result.  

\begin{lemma}\label{bimodule}
The Hilbert $M$-bimodule $L^2(\tilde M)\ominus L^2(M)$ is isomorphic to an infinite multiple of the coarse $M$-bimodule, $(L^2(M)\bar{\otimes}L^2(M))^{\oplus_{\infty}}$. 

In particular, if $B\subset M$ is a non-amenable von Neumann subalgebra, then the trivial $B$-bimodule is not weakly contained in the $B$-bimodule $L^2(\tilde M)\ominus L^2(M)$.
\end{lemma}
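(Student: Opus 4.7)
The plan is to build an explicit unitary $M$-bimodule isomorphism from $L^2(\tilde M)\ominus L^2(M)$ onto an infinite multiple of the coarse $M$-bimodule by means of the reduced-word decomposition of the free product $\tilde M = M * M$, and then to read off the ``in particular'' statement from the characterization of amenability via weak containment in the coarse bimodule recalled in Section \ref{section.amenable}.

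Identify $M$ with the first free factor $M_1$ (generated by $\{a_k\}_{k=1}^n$), denote the second copy by $M_2$ (generated by $\{b_k\}_{k=1}^n$), and set $K_i := L^2(M_i)\ominus\mathbb C$. Voiculescu's free product construction yields the canonical orthogonal decomposition
\[
L^2(\tilde M) = \mathbb C \oplus \bigoplus_{n\geq 1}\;\bigoplus_{\substack{(i_1,\ldots,i_n)\\ i_j\neq i_{j+1}}} K_{i_1}\otimes\cdots\otimes K_{i_n},
\]
and under this identification $L^2(M) = \mathbb C \oplus K_1$. Consequently $L^2(\tilde M)\ominus L^2(M)$ is the sum of those reduced-word summands containing at least one $K_2$-factor; grouping such summands by the number $k\geq 1$ of occurrences of $K_2$ yields an orthogonal decomposition $L^2(\tilde M)\ominus L^2(M) = \bigoplus_{k\geq 1}\mathcal H_k$ into closed $M$-$M$-subbimodules.

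Next I realize each $\mathcal H_k$ as a coarse $M$-bimodule of infinite multiplicity. Introduce the multiplicity space
\[
\mathcal K_k := K_2\otimes L^2(M)\otimes K_2\otimes\cdots\otimes L^2(M)\otimes K_2
\]
(with $k$ factors of $K_2$ interleaved with $k-1$ factors of $L^2(M)$) and consider
\[
U_k\colon L^2(M)\otimes\mathcal K_k\otimes L^2(M)\longrightarrow\mathcal H_k,\quad m_0\otimes(\xi_1\otimes m_1\otimes\cdots\otimes\xi_k)\otimes m_k \longmapsto m_0\xi_1 m_1\xi_2\cdots\xi_k m_k.
\]
Using freeness of $M_1$ and $M_2$ inside $\tilde M$, an iterative conditional-expectation argument onto $M_1$ (expanding each $m_j$ as $(m_j - \tau(m_j)) + \tau(m_j)\cdot 1$ and exploiting that alternating centred products have trace zero) yields the moment identity
\[
\langle m_0\xi_1 m_1\cdots m_k,\; m_0'\xi_1' m_1'\cdots m_k'\rangle_{L^2(\tilde M)} = \prod_{j=0}^{k}\langle m_j,m_j'\rangle\;\prod_{j=1}^{k}\langle\xi_j,\xi_j'\rangle.
\]
This is precisely the statement that $U_k$ is an isometry; its image is obviously dense in $\mathcal H_k$ and the defining formula makes it manifestly $M$-$M$-bimodular. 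Hence $\mathcal H_k\cong L^2(M)\otimes\mathcal K_k\otimes L^2(M)$ as $M$-bimodules, which is an infinite multiple of the coarse bimodule since $\mathcal K_k$ is infinite-dimensional. Summing over $k\geq 1$ delivers the first assertion.

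For the ``in particular'' statement, let $B\subset M$ be non-amenable and suppose, toward a contradiction, that the trivial $B$-bimodule $L^2(B)$ is weakly contained in $L^2(\tilde M)\ominus L^2(M)$. By the first part this is equivalent to weak containment of $L^2(B)$ in a countable infinite multiple of the coarse $M$-bimodule $L^2(M)\otimes L^2(M)\cong L^2(\langle M,e_{\mathbb C}\rangle)$, and since weak containment in an infinite multiple is equivalent to weak containment in a single copy we obtain a net of tracial $B$-almost central vectors in $L^2(\langle M,e_{\mathbb C}\rangle)$. By definition this says that $B$ is amenable relative to $\mathbb C$ inside $M$, equivalently (by the fact recorded in Section \ref{section.amenable}) that $B$ is amenable, contradicting the hypothesis. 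The principal technical ingredient is the moment identity underlying the isometry of $U_k$; the rest is bookkeeping and a direct appeal to the preliminaries.
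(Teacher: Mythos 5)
Your proof is correct and reaches the same conclusion as the paper's, but via a genuinely different route in the main step. The paper works with the group structure of $\mathbb{F}_{2n}$: it writes $L^2(\tilde M)\ominus L^2(M)=\bigoplus_{s\in S}\overline{\operatorname{sp}(Mu_sM)}$, where $S$ is the (infinite) set of reduced words beginning and ending with a $b$-letter, shows $g^{-1}\mathbb{F}_n g\cap\mathbb{F}_n=\{e\}$ for $g\in S$, and from the resulting trace factorization $\tau(u_g^*au_gb)=\tau(a)\tau(b)$ deduces that each summand $\overline{\operatorname{sp}(Mu_sM)}$ is a single copy of the coarse $M$-bimodule. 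Your approach bypasses the group-theoretic combinatorics and works directly with the Voiculescu free-product Hilbert-space decomposition, grouping reduced words by the number $k$ of $K_2$-letters and obtaining $\mathcal H_k\cong L^2(M)\otimes\mathcal K_k\otimes L^2(M)$ via the free-moment identity. What the paper's approach buys is computational simplicity in the group setting (the coset decomposition makes the bimodule isomorphism a one-line consequence of malnormality of $\mathbb{F}_n$ in $\mathbb{F}_{2n}$); what yours buys is generality, since it applies verbatim to $L^2(P_1*P_2)\ominus L^2(P_1)$ for any free product of tracial von Neumann algebras with $P_2$ diffuse, which is in fact how this result is usually phrased in the literature.

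One small point worth tightening: in the ``in particular'' part you say the net of $B$-almost central vectors in $L^2(\langle M,e_{\mathbb{C}}\rangle)$ obtained from weak containment is ``tracial'' and ``by definition'' witnesses amenability of $B$ relative to $\mathbb{C}$. Weak containment of $B$-bimodules only gives the tracial normalization $\langle x\xi_n,\xi_n\rangle\to\tau(x)$ for $x\in B$, while the definition in Section \ref{section.amenable} asks it for all $x\in M$. The conclusion is still correct, but the cleaner route — the one the paper takes — is to note that $L^2(M)\bar\otimes L^2(M)$ is already weakly contained in the coarse $B$-bimodule $L^2(B)\bar\otimes L^2(B)$ (since $L^2(M)$ as a one-sided $B$-module embeds into $L^2(B)^{\oplus\infty}$), so the trivial $B$-bimodule would be weakly contained in the coarse $B$-bimodule, which directly forces amenability of $B$.
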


{\it Proof.} 
 Let $S\subset \mathbb F_{2n}$ be the set of elements $g\in\mathbb F_n$ whose reduced form begins and ends with a non-zero power of $b_k$ for some $k\in\overline{1,n}$. Since $L^2(\tilde M)\ominus L^2(M)=\bigoplus_{g\in S}\overline{{\rm sp} (Mu_gM)}$,  in order to prove the assertion, it suffices to show that $\overline{{\rm sp} (Mu_gM)}\cong L^2(M)\bar{\otimes}L^2(M)$, as Hilbert $M$-bimodules, for any $g\in S$.

If $g\in S$, then $g^{-1}\mathbb F_ng\cap\mathbb F_n=\{e\}$. Hence, $\tau(u_g^*u_hu_gu_k)=\delta_{g^{-1}hg,k^{-1}}=\delta_{h,e}\delta_{k,e}=\tau(u_h)\tau(u_k)$, for all $h,k\in\mathbb F_n$. This implies $\tau(u_g^*au_gb)=\tau(a)\tau(b)$, for all $a,b\in M$, and further that $$\text{$\langle xu_gy,zu_gt\rangle=\tau(u_g^*z^*xu_gyt^*)=\tau(z^*x)\tau(yt^*)=\langle x\otimes y,z\otimes t\rangle_{L^2(M)\bar{\otimes}L^2(M)} $, for all $x,y,z,t\in M$.}$$ Thus, $x\otimes y\mapsto xu_gy$ extends to an isomorphism of Hilbert $M$-bimodules $L^2(M)\bar{\otimes}L^2(M)\cong \overline{{\rm sp} (Mu_gM)}$.

In order to prove the last part of the lemma, let $B\subset M$ be a non-amenable von Neumann subalgebra. First, note that the $B$-bimodule $L^2(M)\otimes L^2(M)$ is weakly contained in the coarse $B$-bimodule $L^2(B)\otimes L^2(B)$. This follows from the fact that any left (respectively, right) $B$-module is contained in $L^2(B)^{\oplus\infty}$ as a left (respectively, right) $B$-module (see, for instance, \cite[Proposition 8.2.3]{AP22}). Next, we derive that the $B$-bimodule $(L^2(M)\bar{\otimes}L^2(M))^{\oplus{\infty}}$ is weakly contained in the $B$-bimodule $(L^2(B)\bar{\otimes}L^2(B))^{\oplus{\infty}}$, and hence, in the coarse $B$-bimodule $L^2(B)\bar{\otimes}L^2(B)$. Finally, if we assume by contradiction that the trivial $B$-bimodule is  weakly contained in the $B$-bimodule $L^2(\tilde M)\ominus L^2(M)$, we derive from the first part of the lemma that  the trivial $B$-bimodule is  weakly contained in the coarse $B$-bimodule $L^2(B)\bar{\otimes}L^2(B)$. This implies that $B$ is amenable, contradiction.
This proves the lemma.
\hfill$\blacksquare$

{\it Proof of Theorem \ref{solid}.} Let $A\subset M$ be a diffuse von Neumann subalgebra and let $(u_k)_k\subset\mathcal U(A)$ be a sequence which converges weakly to $0$. Denote $B=A'\cap M$.

Put $t_n=1/2^n$, for every $n\geq 1$. By Lemma \ref{compact} we can find a subsequence $(v_n)_n$ of $(u_k)_k$ such that $\lim_{n\rightarrow\infty}\|E_M(\alpha_{t_n}(v_n))\|_2=0$. By letting  $\xi_n=\alpha_{t_n}(v_n)-E_M(\alpha_{t_n}(v_n))$, we  claim that \begin{equation}\label{ss1}
\text{$\lim_{n\rightarrow\infty}\|[x,\xi_n]\|_2=0$, for every $x\in B$,}\end{equation}
\begin{equation}\label{ss2}\text{$\lim_{n\rightarrow\infty}\langle x\xi_n,\xi_n\rangle=\tau(x)$, for every $x\in \tilde M$.}\end{equation}

To prove \eqref{ss1}, note that for any $x\in B$ we have $\|[x,E_M(\alpha_{t_n}(v_n)) ]\|_2\leq 2 \|x\| \|E_M(\alpha_{t_n}(v_n))\|_2$ and
$$
\|[x,\alpha_{t_n}(v_n)]\|_2=\|[\alpha_{-t_n}(x),v_n]\|_2=\|[\alpha_{-t_n}(x)-x,v_n]\|_2 \leq \|\alpha_{-t_n}(x)-x\|_2.
$$

To prove \eqref{ss2}, note that
 $\lim_{n\rightarrow\infty}\|E_M(\alpha_{t_n}(v_n))\|_2=0$ implies that
 for every $x\in \tilde M$, we have
 $$
 \lim_{n\rightarrow\infty}\langle x\xi_n,\xi_n\rangle=\lim_{n\rightarrow\infty}\langle x\alpha_{t_n}(v_n),\alpha_{t_n}(v_n)\rangle=\tau(x).
 $$

Combining \eqref{ss1} and \eqref{ss2} it follows that 
the trivial $B$-bimodule is weakly contained in the $B$-bimodule $L^2(\tilde M)\ominus L^2(M)$.
Hence, by Lemma \ref{bimodule} it follows that $B$ is amenable.
\hfill$\blacksquare$

\section{Wreath-like Products and Wreath-like von Neumann Algebras}\label{section.wreath}

For further use, we recall in this section  the main result of our very recent work \cite{CDI22}. This asserts that property (T) II$_1$ factors form an \emph{embedding universal} family, i.e., every separable tracial von Neumann algebra embeds into a property (T) II$_1$ factor, see \cite[Theorem A]{CDI22}.  This result is proved using the so-called wreath-like product von Neumann algebras. In turn, these are built from wreath-like product groups which were introduced and studied in \cite{CIOS21} through the lenses of geometric group theory.  To provide some context we first recall these notions along with some of their basic properties. 

\subsection{Wreath-like product groups} Following \cite{CIOS21} let $A$ and $B$ be countable groups. Then $G$ is a \emph{regular wreath-like product} of $A$ and $B$ if it can be realized as a group  extension 

\begin{equation}\label{regwreathlike1'''}
    1 \rightarrow \bigoplus_{b\in B} A_b\hookrightarrow G \overset{\varepsilon}{\twoheadrightarrow} B \rightarrow 1 
 \end{equation}
 which satisfies the following properties:
 \begin{enumerate}
     \item [a)] $A_b\cong A$ for all $b\in B$, and 
     \item [b)] the conjugation action of  $G$ on $\bigoplus_{b\in B} A_b$  permutes the direct summands according to the rule \begin{equation*}g A_b g^{-1}= A_{\varepsilon(g)b}\text{, for all }g\in G, b\in B.\end{equation*}
 \end{enumerate}
 The class of all such wreath-like groups is denoted by $\mathcal W\mathcal R(A, B)$. When the extension \eqref{regwreathlike1'''} splits, $G$ is the classical wreath product of $A$ and $B$,  $G = A\wr B$. 
 
 \vskip 0.03in
 Next we recall the concept of a cocycle semidirect product group; see \cite[pages 104-105]{Br82}.
\begin{definition}
A {\it cocycle action} of a group $B$ on a group $A$ is a pair $(\alpha, v)$ consisting of two maps $\alpha:B\rightarrow\text{Aut}(A)$ and $v:B\times B\rightarrow A$ which satisfy the following 
\begin{enumerate}
\item $\alpha_b\alpha_c=\text{Ad}(v_{b,c})\alpha_{bc}$, for every $b,c\in B$, 
\item $v_{b,c}v_{bc,d}=\alpha_b(v_{c,d})v_{b,cd}$, for every $b,c,d\in B$, and
\item $v_{b,1}=v_{1,b}=1$, for every $b\in B$.
\end{enumerate}
\end{definition}

\begin{definition}\label{csproduct}
 Let $(\alpha, v)$ be a cocycle action of a group $B$ on group $A$.   Then the set $A\times B$ endowed with the unit $1=(1,1)$ and the multiplication operation $(x,b)\cdot (y,c)=(x\alpha_b(y)v_{b,c},bc)$ is a group, denoted $A\rtimes_{\alpha,v}B$, and called the {\it cocycle semidirect product} group. 
Moreover, we have a short exact sequence $1\rightarrow A\xrightarrow{i} A\rtimes_{\alpha,v}B\xrightarrow{\gamma} B\rightarrow 1$, where $i(a)=(a,1)$ and $\gamma(a,b)=b$.\end{definition}

 \begin{remark}\label{remark.semidirect.product}
 It is not hard to prove that a group $G$ belongs to $\mathcal W\mathcal R(A,B)$ if and only if it is isomorphic to $A^{(B)}\rtimes_{\alpha,v}B$, for a cocycle action $(\alpha,v)$ on $B$ on $A^{(B)}$ such that $\alpha_b(A_c)=A_{bc}$, for every $b,c\in B$; see \cite[Corollary 2.11]{CDI22}.
 \end{remark}

Wreath-like product groups admit a special cocycle semidirect product decomposition. Let $\sigma$ be the {\it shift} action of $B$ on $A^B=\prod_{b\in B}A$ given by $\sigma_b(x)=(x_{b^{-1}c})_{c\in B}$, for every $x=(x_c)_{c\in B}\in A^B$ and $b\in B$. Note that $\sigma$ leaves invariant the normal subgroup $A^{(B)}=\bigoplus_{b\in B}A$ of $A^B$. The following result has been established in  \cite{CIOS21}; see \cite[Lemmas 2.12 and 2.13]{CDI22} for proofs.

\begin{proposition}\emph{\cite{CIOS21,CDI22}}\label{special'} Let $A,B$ be groups. Then  $G\in\mathcal W\mathcal R(A,B)$ if and only if there is a function $\rho:B\rightarrow A^B$ such that setting $v_{b,c}:=\rho_b\sigma_b(\rho_c)\rho_{bc}^{-1}\in A^{(B)}$, for every $b,c\in B$, and $\rho_1=1$, and letting  $\alpha_b:=\emph{Ad}(\rho_b)\sigma_b\in\emph{Aut}(A^{(B)})$, for every $b\in B$, we have that  $G\cong A^{(B)}\rtimes_{\alpha,v}B$.
\end{proposition}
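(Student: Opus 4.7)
The proof is in two parts, connected by Remark~\ref{remark.semidirect.product}, which allows us to pass freely between membership in $\mathcal W\mathcal R(A,B)$ and the existence of a cocycle action $(\alpha,v)$ of $B$ on $A^{(B)}$ with $\alpha_b(A_c)=A_{bc}$.

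For the easier direction, suppose such a $\rho:B\to A^B$ is given. I would first verify that $(\alpha,v)$ is a cocycle action by direct algebraic manipulation. The relation $\alpha_b\alpha_c=\Ad(v_{b,c})\alpha_{bc}$ follows from the expansion
\[
\Ad(\rho_b)\sigma_b\Ad(\rho_c)\sigma_c=\Ad(\rho_b\sigma_b(\rho_c))\sigma_{bc},
\]
using that $\sigma$ is an honest action and that $\sigma_b\Ad(x)\sigma_b^{-1}=\Ad(\sigma_b(x))$ on $A^{(B)}$. The 2-cocycle identity for $v$ and the normalizations $v_{b,1}=v_{1,b}=1$ are similarly routine unwindings from the definition of $v_{b,c}$ and $\rho_1=1$. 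Since conjugation by an element of $A^B$ preserves each summand $A_d$, the automorphism $\alpha_b$ sends $A_c$ to $A_{bc}$, so Remark~\ref{remark.semidirect.product} places $A^{(B)}\rtimes_{\alpha,v}B$ in $\mathcal W\mathcal R(A,B)$.

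For the converse, start with $G\in\mathcal W\mathcal R(A,B)$ and realize it as $G=N\rtimes_{\alpha',v'}B$ via Remark~\ref{remark.semidirect.product}, with $N=\bigoplus_{b\in B}A_b$. Let $s:B\to G$ be the canonical section, so that $s(b)s(c)=v'(b,c)s(bc)$ and $\alpha'_b=\Ad(s(b))|_N$. The key idea is to let the section itself dictate the identifications $A\cong A_b$: fix an arbitrary isomorphism $\iota_1:A\to A_1$ and set $\iota_b:=\Ad(s(b))\circ\iota_1$ for every $b\in B$; since $s(b)A_1s(b)^{-1}=A_b$, this is indeed a well-defined isomorphism $A\to A_b$. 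A direct calculation, using $s(b)s(c)=v'(b,c)s(bc)$ and $\iota_c=\Ad(s(c))\iota_1$, gives
\[
\alpha'_b(\iota_c(a))=v'(b,c)_{bc}\,\iota_{bc}(a)\,v'(b,c)_{bc}^{-1}\qquad\text{for all }a\in A,
\]
where $v'(b,c)_d$ denotes the $A_d$-component of $v'(b,c)\in\bigoplus_d A_d$. I would therefore define
\[
(\rho_b)_d:=\iota_d^{-1}\bigl(v'(b,b^{-1}d)_d\bigr)\in A\qquad(d\in B),
\]
so that $\rho_b\in A^B$, $\rho_1=1$ (since $v'(1,\cdot)=1$), and $\alpha'_b=\Ad(\rho_b)\sigma_b$ by construction.

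The main obstacle is then to promote the identity $\alpha'_b=\Ad(\rho_b)\sigma_b$ to the stronger equality $v'(b,c)=\rho_b\sigma_b(\rho_c)\rho_{bc}^{-1}$, since the former only determines $\rho_b$ up to the centre $Z(A)^B$. The plan is to verify the cocycle equality coordinatewise. Fixing $d\in B$, I would apply the 2-cocycle identity
\[
v'(b,c)\,v'(bc,f)=\alpha'_b(v'(c,f))\,v'(b,cf)
\]
to the triple $(b,c,f)$ with $f=c^{-1}b^{-1}d$, so that $cf=b^{-1}d$ and $bcf=d$; reading off the $d$-component of both sides and using the displayed formula above to compute $(\alpha'_b(v'(c,f)))_d$, a cancellation produces exactly $v'(b,c)_d=(\rho_b)_d\,(\rho_c)_{b^{-1}d}\,(\rho_{bc})_d^{-1}$. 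This coordinate bookkeeping is the heart of the argument; it simultaneously confirms that $\rho_b\sigma_b(\rho_c)\rho_{bc}^{-1}\in A^{(B)}$ and yields $G\cong A^{(B)}\rtimes_{\alpha,v}B$.
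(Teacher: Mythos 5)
Your proof is correct, and since the paper itself supplies no proof but cites \cite[Lemmas 2.12 and 2.13]{CDI22}, your argument follows the same standard route as that reference: fix a section $s:B\to G$, use $\iota_b:=\Ad(s(b))\circ\iota_1$ to pin down the identifications $A\cong A_b$, and read $(\rho_b)_d=\iota_d^{-1}\bigl(v'(b,b^{-1}d)_d\bigr)$ off the cocycle, after which the substitution $f=c^{-1}b^{-1}d$ in the $2$-cocycle identity delivers the coordinatewise equality $\iota_d^{-1}(v'(b,c)_d)=(\rho_b)_d(\rho_c)_{b^{-1}d}(\rho_{bc})_d^{-1}$ (your display writes $v'(b,c)_d$ where $\iota_d^{-1}(v'(b,c)_d)$ is meant, a harmless abuse). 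The two directions and all the coordinate bookkeeping check out.
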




\subsection{Tracial wreath-like product von Neumann algebras}By analogy with wreath-like product groups there is a notion of wreath-like product for tracial von Neumann algebras. First we need to briefly recall the notion of tracial cocycle crossed product von Neumann algebra.   \begin{definition}\label{definition.cocycle.action}
A {\it cocycle action} of a group $B$ on a tracial von Neumann $(M,\tau)$ is a pair $(\beta, w)$ consisting of two maps $\beta:B\rightarrow\text{Aut}(M)$ and $w:B\times B\rightarrow\mathscr U(M)$ which satisfy the following 
\begin{enumerate}
\item $\beta_b\beta_c=\text{Ad}(w_{b,c})\beta_{bc}$, for every $b,c\in B$, 
\item $w_{b,c}w_{bc,d}=\beta_b(w_{c,d})w_{b,cd}$, for every $b,c,d\in B$, and
\item $w_{b,1}=w_{1,b}=1$, for every $b\in B$.
\end{enumerate}
\end{definition}

\begin{definition}
Let $(\beta,w)$ be a cocycle action of a group $B$ on a tracial von Neumann algebra $(M,\tau)$.
The {\it cocycle crossed product} von Neumann algebra $M\rtimes_{\beta,w}B$ is a tracial von Neumann algebra which is  generated by a copy of $M$ and unitary elements $\{u_b\}_{b\in B}$ such that
$u_bxu_b^*=\beta_b(x)$, $u_bu_c=w_{b,c}u_{bc}$ and $\tau(xu_b)=\tau(x)\delta_{b,1}$, for every $b,c\in B$ and $x\in M$.
\end{definition}

\begin{definition}
Let $(M,\tau)$ be a tracial von Neumann algebra and $B$ be a group. A tracial von Neumann algebra $N$ is said to be a {\it wreath-like product of $M$ and $B$} if it is isomorphic to $M^B\rtimes_{\beta,w}B$, where $(\beta,w)$ is a cocycle action of $B$ on $M^B$ such that $\beta_b(M^c)=M^{bc}$, for every $b,c\in B$. We denote by  $\mathcal W\mathcal R(M,B)$ the class of all wreath-like products of $M$ and $B$.
\end{definition}

\begin{example}
If $G\in\mathcal W\mathcal R(A,B)$, then $L(G)\in\mathcal W\mathcal R(L(A),B)$; see \cite[Example 3.2]{CDI22}.


\end{example}

\begin{notation}\label{notatie}
Let  $(M,\tau)$ be a tracial von Neumann algebra and $B$ be a group. We denote by 
\begin{enumerate}
\item $\gamma:\mathscr U(M)^{(B)}\rightarrow\mathscr U(M^B)$ the homomorphism given by $\gamma((x_b)_{b\in B})=\otimes_{b\in B}x_b$.
\item $\eta:\mathscr U(M)^B\rightarrow\text{Aut}(M^B)$ the homomorphism given by  $\eta((y_b)_{b\in B})=\otimes_{b\in B}\text{Ad}(y_b)$.
\item $B\curvearrowright^{\sigma}\mathscr U(M)^B$ the shift action of $B$ (which preserves the subgroup $\mathscr U(M)^{(B)}<\mathscr U(M)^B$).
\item $B\curvearrowright^{\lambda} M^B$ the Bernoulli shift action given by $\lambda_b(x)=\otimes_{c\in B}x_{b^{-1}c}$, for $x=\otimes_{c\in B}x_c\in M^B$.
\end{enumerate}
\end{notation}

With this notation, we have:


\begin{lemma}\label{wrvN1}
 Let $(M,\tau)$ be a tracial von Neumann algebra and $B$ a group. Let $\xi:B\rightarrow\mathscr U(M)^B$ be a map such that $\xi_b\sigma_b(\xi_c)\xi_{bc}^{-1}\in\mathscr U(M)^{(B)}$, for every $b,c\in B$. Define  $\beta_b=\eta(\xi_b)\lambda_b\in\emph{Aut}(M^B)$ and $w_{b,c}=\gamma(\xi_b\sigma_b(\xi_c)\xi_{bc}^{-1})\in\mathscr U(M^B)$, for every $b,c\in B$. Then $(\beta,w)$ is a cocycle action of $B$ on $M^B$ and $M^B\rtimes_{\beta,w}B\in\mathcal W\mathcal R(M,B)$.
\end{lemma}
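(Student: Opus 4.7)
The claim breaks into two verifications: that $(\beta,w)$ satisfies the three axioms of a cocycle action (Definition \ref{definition.cocycle.action}), and that $\beta_b(M^c)=M^{bc}$ for every $b,c\in B$. Both parts are a direct computation pivoting on a handful of compatibility identities between the maps $\gamma,\eta,\lambda,\sigma$ recorded in Notation \ref{notatie}.

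The first step is to record three elementary identities from the explicit formulas $\lambda_b(\otimes_d x_d)=\otimes_d x_{b^{-1}d}$ and $\eta(y)=\otimes_d\mathrm{Ad}(y_d)$:
\begin{enumerate}
\item $\lambda_b\,\eta(y)\,\lambda_b^{-1}=\eta(\sigma_b(y))$ for every $y\in\mathscr U(M)^B$;
\item $\lambda_b(\gamma(z))=\gamma(\sigma_b(z))$ for every $z\in\mathscr U(M)^{(B)}$;
\item $\eta(z)=\mathrm{Ad}(\gamma(z))$ for every $z\in\mathscr U(M)^{(B)}$, which makes sense precisely because $z$ has finite support, so $\gamma(z)\in\mathscr U(M^B)$.
\end{enumerate}
The only further ingredients are the standing hypothesis $\xi_b\sigma_b(\xi_c)\xi_{bc}^{-1}\in\mathscr U(M)^{(B)}$ and the normalization $\xi_1=1$ (implicit here, matching $\rho_1=1$ in Proposition \ref{special'}).

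Using (1), together with $\lambda_b\lambda_c=\lambda_{bc}$ and the fact that $\eta$ is a homomorphism, I will compute
$\beta_b\beta_c=\eta(\xi_b)\lambda_b\eta(\xi_c)\lambda_b^{-1}\lambda_{bc}=\eta(\xi_b\,\sigma_b(\xi_c))\lambda_{bc}$,
and then factor $\xi_b\sigma_b(\xi_c)=\bigl(\xi_b\sigma_b(\xi_c)\xi_{bc}^{-1}\bigr)\,\xi_{bc}$, where the first parenthesis lies in $\mathscr U(M)^{(B)}$ by hypothesis. Applying (3) converts $\eta$ of that parenthesis into $\mathrm{Ad}(w_{b,c})$, yielding axiom (1): $\beta_b\beta_c=\mathrm{Ad}(w_{b,c})\beta_{bc}$. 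For the $2$-cocycle identity $w_{b,c}w_{bc,d}=\beta_b(w_{c,d})w_{b,cd}$, I will unfold $\beta_b(w_{c,d})=\eta(\xi_b)\lambda_b\bigl(\gamma(\xi_c\sigma_c(\xi_d)\xi_{cd}^{-1})\bigr)$, push $\lambda_b$ inside using (2) and $\sigma_b\sigma_c=\sigma_{bc}$, and then absorb $\eta(\xi_b)$ into $\gamma$ using (3); both sides collapse to $\gamma\bigl(\xi_b\sigma_b(\xi_c)\sigma_{bc}(\xi_d)\xi_{bcd}^{-1}\bigr)$ after the internal $\xi_{bc}^{\pm 1}$ and $\xi_b^{\pm 1}$ cancellations. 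Finally, $w_{b,1}=w_{1,b}=1$ is immediate from $\xi_1=1$ and $\sigma_1=\mathrm{id}$.

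The wreath-like property $\beta_b(M^c)=M^{bc}$ is then transparent: $\lambda_b$ sends $M^c$ onto $M^{bc}$ by construction of the Bernoulli shift on tensor slots, while $\eta(\xi_b)=\bigotimes_d\mathrm{Ad}(\xi_{b,d})$ acts slot by slot and so leaves the slot $M^{bc}$ invariant. The only friction I anticipate is bookkeeping — being careful about which identities genuinely require finite support (so that $\gamma$ is defined and $\eta$ becomes inner) versus those valid for all of $\mathscr U(M)^B$ — but once identities (1)--(3) are in hand the remainder is purely symbolic.
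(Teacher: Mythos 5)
Your proof is correct, and since the paper states the lemma without giving its own proof (deferring implicitly to \cite{CDI22}), there is no alternative argument to compare against; the direct slot-wise verification you give is the natural one.

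Two small precision points are worth flagging. First, in the verification of the $2$-cocycle identity you invoke identity (3), $\eta(z)=\mathrm{Ad}(\gamma(z))$ for $z\in\mathscr U(M)^{(B)}$, to ``absorb $\eta(\xi_b)$ into $\gamma$.'' But $\xi_b$ need not be finitely supported, so (3) does not literally apply; what you actually need is the companion identity
\[
\eta(y)\bigl(\gamma(z)\bigr)=\gamma\bigl(y z y^{-1}\bigr)\qquad\text{for all }y\in\mathscr U(M)^B,\ z\in\mathscr U(M)^{(B)},
\]
which is equally immediate from the slot-wise formulas (note $y z y^{-1}$ stays finitely supported because each slot conjugates $1$ to $1$) but is not a consequence of (3) alone. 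With that identity in hand the computation $\beta_b(w_{c,d})w_{b,cd}=\gamma\bigl(\xi_b\sigma_b(\xi_c)\sigma_{bc}(\xi_d)\xi_{bcd}^{-1}\bigr)=w_{b,c}w_{bc,d}$ goes through exactly as you describe. Second, you are right that axiom (3) of a cocycle action requires the normalization $\xi_1=1$ (or at least $\gamma(\xi_1)=1$), which the lemma statement does not record but which is harmless to assume and is automatic in the intended application via Proposition \ref{special'}; it is good that you made this explicit. For axiom (1) your use of identity (3) is exact, since there the argument $\xi_b\sigma_b(\xi_c)\xi_{bc}^{-1}$ lies in $\mathscr U(M)^{(B)}$ by hypothesis, and the check that $\beta_b(M^c)=M^{bc}$ is complete.
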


Using this lemma in combination with Proposition \ref{special'} it was shown in \cite{CDI22} that given $G\in\mathcal W\mathcal R(A,B)$,  
  any homomorphism $\pi:A\rightarrow\mathscr U(M)$, where $(M,\tau)$ is  a tracial von Neumann algera, extends to a homomorphism $\widetilde\pi:G\rightarrow\mathscr U(N)$,  for some $N\in\mathcal W\mathcal R(M,B)$. 

More precisely,  using Proposition \ref{special'}, we write $G=A^{(B)}\rtimes_{\alpha,v}B$, where $(\alpha,v)$ is a cocycle action of $B$ on $A^{(B)}$ given by $\alpha_b=\text{Ad}(\rho_b)\sigma_b$ and $v_{b,c}=\rho_b\sigma_b(\rho_c)\rho_{bc}^{-1}$, for some map $\rho:B\rightarrow A^B$. Then we have the following:

\begin{proposition}\label{extend'}
Let  $\pi:A\rightarrow\mathscr U(M)$ be a homomorphism, where $(M,\tau)$ is a tracial von Neumann algebra. Define $\xi:=\pi^B(\rho_b)\in\mathscr U(M)^B$, for every $b\in B$. 
Then $\xi_b\sigma_b(\xi_c)\xi_{bc}^{-1}\in\mathscr U(M)^{(B)}$, for every $b,c\in M$.
Define  $\beta_b=\eta(\xi_b)\lambda_b\in\emph{Aut}(M^B)$ and $w_{b,c}=\gamma(\xi_b\sigma_b(\xi_c)\xi_{bc}^{-1})\in\mathscr U(M^B)$, for every $b,c\in B$. 
Then $(\beta,w)$ is a cocycle action of $B$ on $M^B$,  $N:=M^B\rtimes_{\beta,w}B\in\mathcal W\mathcal R(M,B)$ and there is a homomorphism $\widetilde \pi:G\rightarrow\mathscr U(N)$ given by $\widetilde\pi(x)=\gamma(\pi^B(x))=\otimes_{b\in B}\pi(x_b)$ and $\widetilde\pi(e,c)=u_c$, for every $x=(x_b)_{b\in B}\in A^{(B)}$ and $c\in B$.

\end{proposition}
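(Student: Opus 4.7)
The proposition is essentially a bookkeeping exercise: the homomorphism $\pi:A\to\mathscr U(M)$ transports the cocycle semidirect product data $(\alpha,v)$ provided by Proposition~\ref{special'} into cocycle crossed product data $(\beta,w)$ on the tensor power $M^B$, and once this is established the formula for the extension $\widetilde\pi$ is essentially forced. The plan is to (i) verify the hypothesis of Lemma~\ref{wrvN1}; (ii) invoke that lemma to obtain the cocycle action $(\beta,w)$ and the membership $N\in\mathcal W\mathcal R(M,B)$; and (iii) check that $\widetilde\pi$ is a group homomorphism by reducing to two compatibility identities.

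For step (i), the key observation is that $\pi^B:A^B\to\mathscr U(M)^B$ is a group homomorphism which intertwines the shifts (so that $\pi^B\circ\sigma_b=\sigma_b\circ\pi^B$) and which carries $A^{(B)}$ into $\mathscr U(M)^{(B)}$. Applying these properties coordinatewise yields
\[
\xi_b\,\sigma_b(\xi_c)\,\xi_{bc}^{-1}=\pi^B(\rho_b)\,\pi^B(\sigma_b(\rho_c))\,\pi^B(\rho_{bc})^{-1}=\pi^B\bigl(\rho_b\sigma_b(\rho_c)\rho_{bc}^{-1}\bigr)=\pi^B(v_{b,c}),
\]
which lies in $\mathscr U(M)^{(B)}$ because $v_{b,c}\in A^{(B)}$ by Proposition~\ref{special'}. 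Step (ii) is then immediate from Lemma~\ref{wrvN1}, producing both the cocycle action $(\beta,w)$ and the wreath-like product $N\in\mathcal W\mathcal R(M,B)$ together with its canonical unitaries $\{u_b\}_{b\in B}\subset\mathscr U(N)$.

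For step (iii), define $\widetilde\pi(x,b):=\gamma(\pi^B(x))\,u_b$ for every $(x,b)\in A^{(B)}\rtimes_{\alpha,v}B$. Using the multiplication $(x,b)(y,c)=(x\alpha_b(y)v_{b,c},bc)$ in $G$ together with the relations $u_b m u_b^{-1}=\beta_b(m)$ and $u_bu_c=w_{b,c}u_{bc}$ in $N$, and using that $\gamma$ is a group homomorphism from $\mathscr U(M)^{(B)}$ into $\mathscr U(M^B)$, the identity $\widetilde\pi((x,b)(y,c))=\widetilde\pi(x,b)\widetilde\pi(y,c)$ reduces to the pair of compatibilities
\[
\gamma\bigl(\pi^B(\alpha_b(y))\bigr)=\beta_b\bigl(\gamma(\pi^B(y))\bigr)\qquad\text{and}\qquad\gamma(\pi^B(v_{b,c}))=w_{b,c}.
\]
The second identity is what was shown in step (i). For the first, unwinding $\alpha_b=\Ad(\rho_b)\sigma_b$ and $\beta_b=\eta(\xi_b)\lambda_b$ further reduces the check to the two tautological compatibilities $\lambda_b\circ\gamma=\gamma\circ\sigma_b$ and $\eta(\xi_b)\circ\gamma=\gamma\circ\Ad(\xi_b)$ on $\mathscr U(M)^{(B)}$, each of which holds by componentwise inspection.

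The only subtle point is the interplay between the componentwise objects living in $\mathscr U(M)^B$ (the automorphism $\eta$, the shift $\sigma$, the map $\gamma$) and their von Neumann algebraic avatars on $M^B$ (tensor-product conjugation and the Bernoulli shift $\lambda$), especially since $\gamma$ is only defined on the restricted product $\mathscr U(M)^{(B)}$; consequently every equality above has to be interpreted at the level of finitely supported tensors, which is precisely why the hypothesis $\pi^B(A^{(B)})\subset\mathscr U(M)^{(B)}$ from step (i) is essential. Beyond this bookkeeping I do not anticipate a real obstacle: the whole argument is formal once the compatibility of $\pi^B$ with the shift has been noted.
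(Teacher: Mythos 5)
Your proof is correct and follows precisely the route indicated by the text surrounding the proposition (the paper itself does not spell out the argument, attributing it to \cite{CDI22} and noting that it combines Lemma~\ref{wrvN1} with Proposition~\ref{special'}). The three key identities you isolate — $\pi^B\circ\sigma_b=\sigma_b\circ\pi^B$ (so that $\xi_b\sigma_b(\xi_c)\xi_{bc}^{-1}=\pi^B(v_{b,c})\in\mathscr U(M)^{(B)}$), $\lambda_b\circ\gamma=\gamma\circ\sigma_b$, and $\eta(\xi_b)\circ\gamma=\gamma\circ\mathrm{Ad}(\xi_b)$ — are exactly what makes Lemma~\ref{wrvN1} applicable and what forces $\widetilde\pi$ to be a homomorphism; your reduction of multiplicativity to the two compatibilities $\gamma(\pi^B(\alpha_b(y)))=\beta_b(\gamma(\pi^B(y)))$ and $\gamma(\pi^B(v_{b,c}))=w_{b,c}$ is the natural way to organize the bookkeeping, and your attention to the domain issue with $\gamma$ (defined only on $\mathscr U(M)^{(B)}$) is the correct subtlety to flag.
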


Finally, using Propositions \ref{special'} and \ref{extend'} and the fact that any acylindrically hyperbolic group has plenty of wreath-like quotients (see \cite[Theorems 4.20]{CIOS21}) and that there exist plenty of property (T) wreath-like product groups (see \cite[Theorems 6.9]{CIOS21}), the following result was shown in \cite[Theorem A]{CDI22}. 


\begin{theorem}\label{theorem.cdi}
Let  $(M,\tau)$ be any separable von Neumann algebra. Then the following hold: 

\begin{enumerate}

\item For every acylindrically hyperbolic group $H$, $M$ embeds into a II$_1$ factor $N$ which is generated by a representation $\pi:H\rightarrow\mathscr U(N)$. Thus, if $H$ has property (T), then $N$ has property (T).
\item $M$ embeds into a property (T), wreath-like product II$_1$ factor  $P\in \mathcal{WR}(Q, B)$ where $B$ is a hyperbolic property (T) group, $\emph{Out}(P)=1$, and $\mathcal F(P)=\{1\}$.  More precisely,  $Q$ can be taken to be $Q= (M\ast L(\mathbb F_2))\overline\otimes R$, where $R$ is the hyperfinite II$_1$ factor.  
\end{enumerate}
\end{theorem}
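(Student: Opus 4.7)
The plan is to combine Proposition \ref{extend'} with the structural results for wreath-like quotients of acylindrically hyperbolic groups from \cite{CIOS21}. First I would set $Q:=(M*L(\mathbb F_2))\overline\otimes R$, which is a separable II$_1$ factor containing $M$ (here $M*L(\mathbb F_2)$ is a factor because $L(\mathbb F_2)$ is diffuse, and tensoring with $R$ preserves factoriality). Since $Q$ is separable, $\mathscr U(Q)$ admits a countable $\|\cdot\|_2$-dense subgroup, so there exist a countable group $A$ (for concreteness, $A=\mathbb F_\infty$) and a homomorphism $\pi_0:A\to\mathscr U(Q)$ with $\pi_0(A)''=Q$. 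For part (1), given an acylindrically hyperbolic group $H$, I would invoke \cite[Theorem 4.20]{CIOS21} to produce a quotient $q:H\twoheadrightarrow G$ with $G\in\mathcal{WR}(A,B)$ for some hyperbolic group $B$, and then apply Proposition \ref{extend'} to $\pi_0$ to obtain $N\in\mathcal{WR}(Q,B)$ together with a homomorphism $\widetilde{\pi_0}:G\to\mathscr U(N)$ satisfying $\widetilde{\pi_0}(x)=\gamma(\pi_0^B(x))$ for $x\in A^{(B)}$ and $\widetilde{\pi_0}(e,c)=u_c$ for $c\in B$. Setting $\pi:=\widetilde{\pi_0}\circ q:H\to\mathscr U(N)$, four items need to be verified: (i) $M\hookrightarrow Q\hookrightarrow Q^B\hookrightarrow N$; (ii) the density $\pi_0(A)''=Q$ propagates to $\widetilde{\pi_0}(A^{(B)})''=Q^B$, which together with the unitaries $u_c$ yields $\pi(H)''=\widetilde{\pi_0}(G)''=N$; (iii) $N$ is a II$_1$ factor because $Q$ is diffuse and the twisted Bernoulli-type action of the infinite icc group $B$ on $Q^B$ is free and properly outer; and (iv) when $H$ has property (T) so does the quotient $G$, and since $\pi(H)''=N$, it follows from the standard correspondence \cite{CJ85,Po01b} that $N$ inherits property (T).

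For part (2), the same recipe applies, but the wreath-like product group $G\in\mathcal{WR}(A,B)$ is obtained from \cite[Theorem 6.9]{CIOS21} with $B$ hyperbolic property (T) and $G$ itself property (T), together with the additional structural constraints needed for the rigidity of the resulting factor. Applying Proposition \ref{extend'} to $\pi_0$ then produces a II$_1$ factor $P\in\mathcal{WR}(Q,B)$ and a generating representation $\widetilde{\pi_0}:G\to\mathscr U(P)$; the embedding $M\hookrightarrow P$, factoriality, and property (T) for $P$ are established exactly as in part (1). The remaining conclusions $\mathrm{Out}(P)=1$ and $\mathcal F(P)=\{1\}$ are transferred from the group-theoretic rigidity of $G$ and the hyperbolic property (T) of $B$ to the factor $P$ by Popa's deformation/rigidity techniques as developed in \cite{CIOS21}, exploiting the explicit wreath-like product structure of $P$.

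The bulk of the difficulty is not in the algebraic extension procedure, which is essentially formal once Proposition \ref{extend'} is in hand, but in the two deep group-theoretic inputs of \cite{CIOS21}: the production of abundant wreath-like quotients of acylindrically hyperbolic groups with prescribed fiber $A$ (Theorem 4.20), and the construction of hyperbolic property (T) groups whose wreath-like products are rigid enough to force triviality of the outer automorphism group and of the fundamental group of the resulting factor (Theorem 6.9). Granting these, the factoriality and property (T) verifications follow from standard arguments in Popa's deformation/rigidity theory.
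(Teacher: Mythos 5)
The paper does not prove this theorem itself; it quotes it from \cite[Theorem A]{CDI22} and merely indicates the ingredients: Proposition \ref{extend'}, Proposition \ref{special'}, and \cite[Theorems 4.20 and 6.9]{CIOS21}. Your reconstruction uses exactly those ingredients in exactly the indicated order — choose a countable group $A$ (e.g.\ $\mathbb F_\infty$) with a dense representation into $\mathscr U(Q)$ for $Q=(M*L(\mathbb F_2))\overline\otimes R$, obtain a wreath-like quotient $G\in\mathcal{WR}(A,B)$ of $H$ (resp.\ a property (T) wreath-like product for part (2)) from \cite{CIOS21}, and apply Proposition \ref{extend'} to produce $N\in\mathcal{WR}(Q,B)$ together with a generating representation of $G$, after which factoriality follows from proper outerness of the permutation-type cocycle action on $Q^B$ (since $Q$ is diffuse and a factor) and property (T) is pulled back through $\pi(H)''=N$ in the usual way — so it is essentially the same approach as the paper's cited source.
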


We now explain the terminology used in the theorem. The class of acylindrically hyperbolic groups includes all non-elementary hyperbolic and relatively hyperbolic groups, mapping class groups of closed surfaces of non-zero genus and ${\rm Out}(\mathbb F_n)$, for $n\ge 2.$ We refer the reader to
\cite[Section 3.2]{CIOS21} and to the survey [Osi18] for the precise definition of acylindrically hyperbolic groups and for more details.

For a II$_1$ factor $P$, we denote by $\text{Out}(P)=\text{Aut}(P)/\text{Inn}(P)$ the {\it outer automorphism group of $P$} and by $\mathcal F(P)=\{\tau(e)/\tau(f)\mid\text{$e,f\in P$ projections, $ePe\cong fPf$}\}$
the {\it fundamental group of $P$} \cite{MvN43}.

To this end we make several remarks regarding the previous theorem. Part (1) in Theorem \ref{theorem.cdi} should be viewed as a von Neumann algebraic analogue of the SQ-universality property for (acylindrically) hyperbolic groups established be Olshanskii, \cite{Ol95}, Delaznt \cite{De96}, and Dahmani-Guirardel-Osin \cite{DGO11}.  Recall that a countable group $H$ is called SQ-universal if every countable group embeds into a quotient of $H$. Since property (T) passes to quotients, by taking $H$ to be a hyperbolic group with property (T) it follows that every countable group
embeds into a countable group with property (T). Therefore, Theorem \ref{theorem.cdi} provides an analogue of this fact for II$_1$ factors.

Part (1) applies in particular to icc cocompact lattices $H$ in any  rank one simple real Lie group with finite center (e.g. $Sp(n,1)$, $n\ge 2$), as such $H$ are hyperbolic. Hence, Theorem \ref{theorem.cdi}(1) implies that the family of II$_1$ factors generated by representations of $H$ is embedding universal. This is in sharp contrast with the higher rank case since the work of Peterson \cite{Pe14}  (see also \cite{Be06,BH19})  shows that if $G$ is any icc lattice in higher rank simple real Lie group with finite center (e.g. $\text{SL}_m(\mathbb R)$, $m\geq 3$), then $L(G)$ is the only II$_1$ factor generated by a representation of $G$.

To put Theorem \ref{theorem.cdi}(2) into a better perspective, note that the first examples of II$_1$ factors $P$ with $\mathcal F(P)=\{1\}$ have been obtained by Popa in \cite{Po01b} and the first examples of II$_1$ factors $P$ with $\text{Out}(P)=\{e\}$ and $\mathcal F(P)=\{1\}$ were obtained in \cite{IPP05}. Note that none of these II$_1$ factors have property (T),  although Popa's strengthening of Connes' rigidity conjecture (see \cite[Section 3]{Po07}) predicts that $\text{Out}(L(G))=\{e\}$ and $\mathcal F(L(G))=\{1\}$, whenever $G$ is an icc property (T) group with $\text{Out}(G)=\{e\}$ and no characters. This conjecture has been confirmed for an uncountable class of groups in \cite[Corollary 2.7]{CIOS21} and thus showing that the class $\bm{\mathcal {T}}$ of all II$_1$ factors $P$ with property (T) which satisfy $\text{Out}(P)=\{e\}$ and $\mathcal F(P)=\{1\}$ is uncountable. Theorem \ref{theorem.cdi}(2) shows that $\bm{\mathcal T}$ is in fact embedding universal.

\section{Primeness Results for Wreath-like Product von Neumann Algebras}\label{Section.primeness}

In this section we show that many of the wreath-like product von Neumann algebras introduced in the prior section are (virtually) prime; see Definition \ref{def:vprime} and Theorem \ref{primewlp}. 

We start with two preliminary results on intertwining von Neumann subalgebras in cocycle crossed product von Neumann algebras. These are modest extensions of prior results from \cite{CIK13,CIOS21}, but we include detailed proofs for the readers' convenience.  

\begin{notation}\label{com}Let $G\curvearrowright^{\sigma,\alpha} (Q,\tau)$ be a trace preserving cocycle action on a tracial von Neumann algebra  $(Q,\tau)$. Let $\pi:G\rightarrow H$ be a group epimorphism.  Let $M=Q\rtimes_{\sigma,\alpha} G$.
Following \cite[Section 2]{CIK13}, we consider the $\ast$-embedding $\Delta^\pi: M \hookrightarrow M\overline\otimes L(H)=:\tilde M$ given by $\Delta^\pi(a u_g )= au_g \otimes v_{\pi(g)}$ for every $a\in M$ and $g\in G$. Here we have denoted by $(v_h)_{h\in H}\subset L(H)$ the canonical group unitaries. When $G=H$ and $\pi={\rm id}$, we denote $\Delta^\pi$ simply by $\Delta$.
\end{notation}

Our first result is a straightforward extension of \cite[Proposition 3.4]{CIK13}. 
\begin{proposition}\emph{\cite{CIK13}}\label{CIK13} Assume the Notation \ref{com}  and let $p\in M$ be a projection. Then for any subgroup $K<H$ and any von Neumann subalgebra $P\subseteq pMp$ satisfying $\Delta^\pi (P)\prec_{\tilde M} M\overline\otimes L(K)$, we have that $P \prec_M Q\rtimes_{\sigma, \alpha} \pi^{-1}(K)$. 
\end{proposition}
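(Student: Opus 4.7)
The approach is to use the $L^2$-witness characterization (part $(3)$ of Theorem \ref{corner}) on both sides of the argument, converting a quantitative witness for $\Delta^\pi(P)\prec_{\tilde M} M\overline\otimes L(K)$ into one for $P\prec_M N$, where $N := Q\rtimes_{\sigma,\alpha}\pi^{-1}(K)$. The hypothesis yields finitely many $\tilde x_i,\tilde y_i\in\tilde M$ and a constant $C>0$ such that $\sum_i\|E_{M\overline\otimes L(K)}(\tilde x_i\,\Delta^\pi(u)\,\tilde y_i)\|_2^2\ge C$ for every $u\in\mathscr U(P)$. By Kaplansky's density theorem and linearity, one may replace the $\tilde x_i,\tilde y_i$ by elementary tensors of the form $a_i\otimes v_{h_i}$ and $b_i\otimes v_{k_i}$, where $a_i,b_i\in M$ and $h_i,k_i\in H$ (possibly enlarging the family and shrinking $C$).

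The central trick is a factorization through the image of $\Delta^\pi$. Since $\pi$ is surjective, choose preimages $\tilde h_i,\tilde k_i\in G$ of $h_i,k_i$. Using that $u_{\tilde h_i}$ and $u_{\tilde k_i}$ are unitary in $M$, a direct check gives
\[
a_i\otimes v_{h_i}\,=\,(a_i u_{\tilde h_i}^*\otimes 1)\,\Delta^\pi(u_{\tilde h_i}),\qquad b_i\otimes v_{k_i}\,=\,\Delta^\pi(u_{\tilde k_i})\,(u_{\tilde k_i}^* b_i\otimes 1).
\]
Setting $A_i := a_i u_{\tilde h_i}^*$ and $B_i := u_{\tilde k_i}^* b_i$ in $M$ and using that $\Delta^\pi$ is a $*$-homomorphism, this yields
\[
\tilde x_i\,\Delta^\pi(u)\,\tilde y_i\,=\,(A_i\otimes 1)\,\Delta^\pi(u_{\tilde h_i}\,u\,u_{\tilde k_i})\,(B_i\otimes 1),
\]
so all of the $L(H)$-content now sits inside $\Delta^\pi$, while $A_i\otimes 1$ and $B_i\otimes 1$ lie in $M\overline\otimes L(K)$ and therefore pass freely through any conditional expectation onto $M\overline\otimes L(K)$.

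The last ingredient is the identity $E_{M\overline\otimes L(K)}\circ\Delta^\pi = \Delta^\pi\circ E_N$, which is immediate on Fourier generators $qu_g$ since $\pi(g)\in K$ iff $g\in\pi^{-1}(K)$. Combined with the trace-preservation of $\Delta^\pi$ and the elementary inequality $\|(A\otimes 1)\,\xi\,(B\otimes 1)\|_2\le\|A\|_\infty\|B\|_\infty\|\xi\|_2$, this produces
\[
\|E_{M\overline\otimes L(K)}(\tilde x_i\,\Delta^\pi(u)\,\tilde y_i)\|_2\,\le\,\|A_i\|_\infty\|B_i\|_\infty\,\|E_N(u_{\tilde h_i}\,u\,u_{\tilde k_i})\|_2.
\]
Summing over $i$ and feeding the result into the starting lower bound yields a uniform positive lower bound for $\sum_i\|E_N(u_{\tilde h_i}\,u\,u_{\tilde k_i})\|_2^2$ over $u\in\mathscr U(P)$, and one more appeal to Theorem \ref{corner}(3) inside $M$, with witnesses $\{u_{\tilde h_i}\}$ and $\{u_{\tilde k_i}\}$, then delivers $P\prec_M N$. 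The only mildly subtle point is the factorization of the second paragraph; once one realizes that surjectivity of $\pi$ allows every $v_h$-factor to be absorbed into the image of $\Delta^\pi$ at the cost of multiplying by bounded elements of $M\otimes 1$, the rest is routine bookkeeping.
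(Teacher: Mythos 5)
Your proof is correct and follows essentially the same route as the paper: both reduce, via Theorem~\ref{corner}(3) and $\|\cdot\|_2$-approximation, to witnesses built from group unitaries, both lift the $v_{h}$-factors along the epimorphism $\pi$ using the factorization $1\otimes v_{\pi(g)}=(u_g^{*}\otimes 1)\Delta^\pi(u_g)$, and both conclude with the identity $E_{M\overline\otimes L(K)}\circ\Delta^\pi=\Delta^\pi\circ E_{Q\rtimes_{\sigma,\alpha}\pi^{-1}(K)}$. The only cosmetic difference is that the paper first absorbs the $M$-legs of the elementary tensors to arrive at witnesses of the pure form $1\otimes v_{g_i}$ (yielding an equality of $\|\cdot\|_2$-norms), whereas you keep $A_i\otimes 1$, $B_i\otimes 1$ around and settle for a one-sided estimate, which is equally sufficient for the lower bound.
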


\begin{proof}  Since $\Delta^\pi (P)\prec_{\tilde M} M\overline\otimes L(K)$, using Theorem \ref{corner} and $\|\cdot\|_2$-approximations one can find group elements $g_1, \ldots g_n, h_1 \ldots, h_n\in H$ and a scalar $C>0$ such that for all $x\in \mathscr U (P)$ we have
\begin{equation}\label{weakmixineq}
\sum^n_{i=1} \|E_{M\overline \otimes L(K)}( (1\otimes v_{g_i})\Delta^\pi(x) (1\otimes v_{h_i}))\|_2\geq C.    
\end{equation}
Now pick elements $k_i,l_i \in G$ satisfying $\pi(k_i )=g_i$ and $\pi(l_i)=h_i$, for all $i\in\overline{1,n}$. Using this, together with the relation $E_{M\overline \otimes L(K)}\circ \Delta^\pi= \Delta^\pi\circ E_{Q \rtimes_{\sigma,\alpha} \pi^{-1}(K)}$, we can see that

\begin{equation*}\begin{split}E_{M\overline \otimes L(K) }((1\otimes v_{g_i})\Delta^\pi(x) (1\otimes v_{h_i}) )&=E_{M\overline \otimes L(K) }(( u_{k_i^{-1}}\otimes 1)  \Delta^\pi( u_{k_i} x u_{l_i}) ( u_{l^{-1}_i} \otimes 1 ))\\ 
&=( u_{k_i^{-1}}\otimes 1) E_{M\overline \otimes L(K) }  (\Delta^\pi( u_{k_i} x u_{l_i}) ) (u_{l^{-1}_i} \otimes 1 )\\
&=( u_{k_i^{-1}}\otimes 1) \Delta^\pi ( E_{Q \rtimes_{\sigma,\alpha} \pi^{-1}(K)}( u_{k_i} x u_{l_i}))  ( u_{l^{-1}_i} \otimes 1 ).\end{split} \end{equation*}

\noindent In particular, this shows that $\|E_{M\overline{ \otimes} L(K) }((1\otimes v_{g_i})\Delta^\pi(x) (1\otimes v_{h_i}) )\|_2= 
\|  E_{Q \rtimes_{\sigma,\alpha} \pi^{-1}(K)}( u_{k_i} x u_{l_i})\|_2$ and in combination with \eqref{weakmixineq} we obtain  $\sum^n_{i=1}\|  E_{Q \rtimes_{\sigma,\alpha} \pi^{-1}(K)}( u_{k_i} x u_{l_i})\|_2\geq C$, for all $x\in \mathscr U(P)$. Then Theorem \ref{corner} yields $P \prec_M Q\rtimes_{\sigma, \alpha} \pi^{-1}(K)$, as desired. 
\end{proof}

In preparation for our second preliminary result we recall a deep result of Popa-Vaes from \cite{PV12} regarding the classification of normalizers of amenable subalgebras in various crossed products von Neumann algebras. Since we need this theorem only for tensor products we will state it in this form.   
\begin{theorem}[\text{\cite[Theorem 1.4]{PV12}}]\label{PV12} Let $\Gamma$ be a group that is biexact \cite{BO08} and weakly amenable \cite{Oz08}. Let $P$ be a tracial von Neumann algebra and denote by $M= P\overline \otimes L(\Gamma)$. Let $q\in M$ be a projection and let $Q\subset qMq$ be a von Neumann subalgebra that is amenable relative to $P$ inside $M$.  Then one of the following must hold: \begin{enumerate}
    \item $Q \prec_M P$;
    \item $\mathscr N_{qMq}(Q)''$ is amenable relative to $P$ inside $M$.
\end{enumerate}
    
\end{theorem}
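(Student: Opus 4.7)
The plan is to emulate the deformation/rigidity proof of solidity (Theorem \ref{solid}), with the role of the free malleable deformation taken over by the two structural hypotheses on $\Gamma$. Weak amenability furnishes a sequence of finitely supported, completely bounded multipliers $\varphi_k:\Gamma\to\mathbb C$ with $\sup_k\|\varphi_k\|_{cb}<\infty$ and $\varphi_k(g)\to 1$ for every $g\in\Gamma$; these lift to normal, $P$-bimodular, completely bounded maps $\Phi_k=\id_P\bar\otimes m_{\varphi_k}:M\to M$ converging pointwise in $\|\cdot\|_2$ to $\id_M$. Since $\varphi_k$ is finitely supported, $\Phi_k$ extends to a bounded operator on $L^2(\langle M,e_P\rangle)$ lying in the norm closure of $Me_PM$; this serves as the $P$-relative analogue of the compactness of $E_M\circ\alpha_t$ used in Lemma \ref{compact}.

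Biexactness of $\Gamma$ should then provide an Akemann--Ostrand property relative to $P$: the canonical $\ast$-homomorphism $M\otimes_{\mathrm{alg}}M^{\mathrm{op}}\to\mathbb B(L^2(M))/\cK_P$ factors continuously through the minimal tensor norm, where $\cK_P$ denotes the norm-closed $\ast$-ideal generated by $e_P$ inside $\langle M,e_P\rangle$. This is the $P$-amplified form of the nuclearity of the left--right $\Gamma\times\Gamma^{\mathrm{op}}$-action on the Stone--\v Cech corona of $\Gamma$. Combined with the compact exhaustion provided by the $\Phi_k$, it yields a relative spectral gap statement in the spirit of Lemma \ref{bimodule}: the orthogonal complement of $\cK_P$ inside $L^2(\langle M,e_P\rangle)$, viewed as an $M$-bimodule, is weakly contained in an infinite multiple of the $P$-relative coarse bimodule $L^2(M)\bar\otimes_P L^2(M)$.

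With these tools in hand I would run the Popa-type dichotomy. Suppose $Q\not\prec_M P$ while $Q$ is amenable relative to $P$ inside $M$, and pick a net $(\xi_n)\subset L^2(\langle M,e_P\rangle)$ which is tracial on $qMq$ and asymptotically $Q$-central. By Theorem \ref{corner}(4), the nonintertwining hypothesis forces, after a standard averaging over $Q$, the $\cK_P$-part of $\xi_n$ to vanish in the limit. The relative spectral gap then turns the $Q$-almost central vectors on the resulting ``continuous'' quotient into vectors that are also $\mathscr N_{qMq}(Q)$-almost central, yielding that $\mathscr N_{qMq}(Q)''$ is amenable relative to $P$ in $M$. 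The principal obstacle is the relative Akemann--Ostrand property itself: developing a workable theory of $P$-compact operators on $L^2(\langle M,e_P\rangle)$ and verifying that the biexactness-induced quotient of $C^*_r(\Gamma)$ remains exact after amplification by $P$ is the technical heart of \cite[Sections 3--5]{PV12}, and once it is in place the averaging and spectral gap steps follow the pattern of Popa's argument in Theorem \ref{solid}.
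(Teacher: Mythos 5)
The paper does not prove this theorem: it is imported as a black box from Popa--Vaes \cite{PV12}, so there is no internal proof to compare against. What you have written is a sketch of the \cite{PV12} argument itself.

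Your sketch gets the broad architecture right: weak amenability produces a uniformly cb-bounded net of finitely supported Herz--Schur multipliers whose $P$-amplifications $\Phi_k = \id_P\bar\otimes m_{\varphi_k}$ serve as a $P$-relative compact exhaustion; biexactness is what supplies an Akemann--Ostrand-type nuclearity for the $M$--$M^{\mathrm{op}}$ action modulo the $P$-compacts; and the non-intertwining $Q\not\prec_M P$ pushes the $Q$-almost central, tracial vectors asymptotically out of the $\mathcal K_P$-part. You are also right that the technical bottleneck is formulating and proving the relative AO property. However, the step you describe as ``the relative spectral gap turns $Q$-almost central vectors into $\mathscr N_{qMq}(Q)$-almost central vectors'' is \emph{not} a spectral-gap argument of the kind used in Theorem~\ref{solid}; it is where the real work of \cite{PV12} happens (their Theorem~3.1). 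There one does not upgrade almost-central vectors directly; instead one constructs a $\mathscr N_{qMq}(Q)$-\emph{invariant state} on a suitable C$^*$-algebra containing $qMq$ and $(qMq)'$, and weak amenability is used precisely to obtain normalizer invariance of the limiting state via the uniform cb-bound, not merely as a deformation. In short, weak amenability and biexactness are interlocked in the state-construction step rather than separating cleanly into a ``deformation'' slot and a ``rigidity'' slot as in Popa's proof of solidity. Also, $\mathcal K_P$ is a norm-closed C$^*$-ideal in $\langle M,e_P\rangle$, not a subspace of $L^2(\langle M,e_P\rangle)$, so ``the $\mathcal K_P$-part of $\xi_n$'' requires a more careful formulation (one truncates against $Q$-relative-commutant projections of finite trace and shows the trace norm goes to zero, as in the proof of \cite[Theorem 1.4 via Theorem 3.1]{PV12}). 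Your self-diagnosis of the remaining gap is accurate; just be aware that even granting the relative AO property, the final dichotomy does not fall out of a Theorem~\ref{solid}-style weak-containment argument but requires the state-averaging machinery of \cite[Section 3]{PV12}.
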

We note that when $\Gamma$ is amenable then for \emph{any} von 
 Neumann  subalgebras $Q\subseteq qMq$ we automatically have that both $Q$ and $\mathscr N_{qMq}(Q)''$ are amenable relative to $P$ inside $M$, \cite[Proposition 2.4]{OP07}. Thus in this case the result does not provide any meaningful information, and thus when using this result we are  interested only in the case when $\Gamma$ is non-amenable. Theorem \ref{PV12} covers a variety of groups $\Gamma$ which are very important to the structural study of von Neumann algebras. Next we highlight only one such class that is relevant for our results.
Ozawa established that all hyperbolic groups are bi-exact \cite{Oz03,BO08} and weakly amenable \cite{Oz08}. Since both these properties are hereditary, they are satisfied by all subgroups of hyperbolic groups. Using the strong Tits alternative for hyperbolic groups \cite{Gr87} it follows that every amenable subgroup of a hyperbolic group is  elementary\footnote{A group is called elementary if it contains a cyclic subgroup of finite index.}; thus, Theorem \ref{PV12} is meaningful and applies to all non-elementary subgroups of a given hyperbolic group.

Our second preliminary result is a straightforward extension of \cite[Theorem 6.15]{CIOS21}. The proof is almost identical with the one presented there.

\begin{theorem}\emph{\cite{CIOS21}}\label{comutcontrol}
Let $G$ be a non-elementary subgroup of a hyperbolic group, and let $G\curvearrowright^{\sigma,\alpha} (Q,\tau)$ be a trace preserving cocycle action on a tracial von Neumann algebra  $(Q,\tau)$. Let $M=Q\rtimes_{\sigma,\alpha} G$ and let $p\in M$ be a projection. Then the following hold:

\begin{enumerate}
\item [1.] Let $P\subset p M p$ be a von Neumann subalgebra which is amenable relative to $Q$ inside $M$, and let $N=\mathscr N_{pM p}(P)''$. If there is a von Neumann subalgebra $S\subseteq N$ with the relative property (T) such that $S\nprec_{M}Q$, then  $P\prec_{M}^{\rm s}Q$.
\item [2.] Let  $A,B \subset p M p$ be commuting von Neumann subalgebras  and let $N=\mathscr N_{pM p}(A\vee B)''$. If  there is a von Neumann algebra $S\subseteq  N$ with the relative property (T) and such that $S\nprec_{M} Q$, then $A\prec_{M} Q$ or $B\prec_{M} Q$. 
\end{enumerate}
\end{theorem}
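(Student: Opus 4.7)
The plan mirrors the scheme of \cite[Theorem 6.15]{CIOS21}, combining Popa's comultiplication $\Delta\colon M\hookrightarrow \tilde M := M\overline\otimes L(G)$ (Notation \ref{com}) with the Popa--Vaes dichotomy (Theorem \ref{PV12}). Since $G$ embeds into a hyperbolic group, it is bi-exact and weakly amenable, so Theorem \ref{PV12} applies to subalgebras of $\tilde M$ that are amenable relative to $M\otimes 1$. A standard first ingredient is the bimodule transfer: if $P$ is amenable relative to $Q$ in $M$, then $\Delta(P)$ is amenable relative to $M\otimes 1$ in $\tilde M$, obtained by transporting the almost-central vectors through the isomorphism $\Delta\colon M\to \Delta(M)$, inflating to $\tilde M$, and using $\Delta(Q)\subseteq M\otimes 1$.

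For Part (1), I would apply Theorem \ref{PV12} to $\Delta(P)\subset \tilde M$. In the first alternative, $\Delta(P)\prec_{\tilde M} M\otimes 1$, and Proposition \ref{CIK13} with $\pi=\mathrm{id}$ and $K=\{e\}$ gives $P\prec_M Q$. To upgrade to strong intertwining, I would rerun the whole argument with $P$ replaced by $Pz$ for an arbitrary non-zero projection $z\in \mathscr N_{pMp}(P)'\cap M$: since $z$ commutes with $\mathscr N_{pMp}(P)$ one has $\mathscr N_{pMp}(P)\subseteq \mathscr N_{pMp}(Pz)$, so $S\subseteq N$ continues to normalize $Pz$, all hypotheses persist, and the same analysis yields $Pz\prec_M Q$; Lemma \ref{lemma.elemetary.facts.intertwinining}(1) then delivers $P\prec^{\mathrm s}_M Q$.

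In the second alternative, $\mathscr N_{\tilde M}(\Delta(P))''$ is amenable relative to $M\otimes 1$. Since $\Delta(S)\subseteq \Delta(N)\subseteq \mathscr N_{\tilde M}(\Delta(P))''$, the algebra $\Delta(S)$ is amenable relative to $M\otimes 1$; but it inherits the relative property (T) of $S$, and the standard spectral-gap principle --- a relative (T) subalgebra amenable relative to another must intertwine into it, in the spirit of the argument sketched in Section \ref{section.popa.deformation.rigidity} --- forces $\Delta(S)\prec_{\tilde M} M\otimes 1$. Proposition \ref{CIK13} then gives $S\prec_M Q$, contradicting the hypothesis, so the second alternative is impossible and Part (1) is complete.

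For Part (2), assume for contradiction that both $A\nprec_M Q$ and $B\nprec_M Q$, so also $\Delta(A),\Delta(B)\nprec_{\tilde M} M\otimes 1$ by Proposition \ref{CIK13}. A standard solidity consequence of Theorem \ref{PV12} --- if $X\subset \tilde M$ satisfies $X\nprec M\otimes 1$, then $X'\cap \tilde M$ is amenable relative to $M\otimes 1$ --- applied to $\Delta(A)$, combined with $\Delta(B)\subset \Delta(A)'\cap \tilde M$ and a second application of Theorem \ref{PV12} now to $\Delta(B)$, shows that $\mathscr N_{\tilde M}(\Delta(B))''$ is amenable relative to $M\otimes 1$; this normalizer contains $\Delta(A\vee B)$, because $\Delta(A)$ commutes with $\Delta(B)$, so $\Delta(A\vee B)$ itself is amenable relative to $M\otimes 1$. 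The proof of Part (1) then applies verbatim with $P := A\vee B$ (whose normalizer is exactly $N\supseteq S$), producing $A\vee B\prec_M Q$ and hence $A\prec_M Q$ since $A\subseteq A\vee B$, contradicting the assumption. The principal technical point throughout is the correct bimodule manipulation through the comultiplication $\Delta$; the remaining steps reduce to direct applications of Theorem \ref{PV12} and the relative-(T) spectral-gap principle.
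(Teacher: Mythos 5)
Your proof is correct and follows essentially the same strategy as the paper's: transferring relative amenability through Popa's comultiplication $\Delta$, applying the Popa--Vaes dichotomy of Theorem~\ref{PV12}, and resolving the resulting cases via Proposition~\ref{CIK13} together with a relative property~(T) spectral-gap argument for $\Delta(S)$. The only cosmetic differences are that in Part~1 the paper works directly with the cut-down $Pz$ from the outset (via Lemma~\ref{lemma.elemetary.facts.intertwinining}(1)) instead of ``rerunning'' the argument, and in Part~2 it spells out the chain of three applications of Theorem~\ref{PV12}, explicitly invoking \cite[Corollary F.14]{BO08} to pass from amenable subalgebras of $\Delta(A)$ to $\Delta(A)$ itself --- which is exactly the content packaged into your ``standard solidity consequence.''
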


\begin{proof}1.\ Throughout the proof we use freely the Notation \ref{com}. Assuming the conclusion is false, by Lemma \ref{lemma.elemetary.facts.intertwinining}(1) we find a nonzero projection $z\in N'\cap pM p$ such that $P z\nprec_{M}Q$. Since $P$ is amenable relative to $Q$ inside $M$, we get that $\Delta(P)$ is amenable relative to $M\overline{\otimes}1$ inside $\tilde M$. This follows from $\Delta(Q)=Q\otimes 1\subset M\otimes 1$ and $\Delta(P)$ being amenable relative to $\Delta(Q)$ inside $\tilde M$ (see also \cite[Proposition 2.4(3)]{OP07}).
Since $G$ is  a non-elementary subgroup of a hyperbolic group, applying Theorem \ref{PV12} in the special case of tensor product to $\Delta(P z)\subset M\overline{\otimes}L(G)$ gives that either a) $\Delta(P z)\prec_{M\overline{\otimes}L(G)}M\overline{\otimes} 1$ or b) $\Delta(N z)$ is amenable relative to $M\overline{\otimes} 1$ inside $M\overline{\otimes}L(G)$.
\vskip 0.03in
If a) holds, then by Proposition \ref{CIK13} we have that $P z\prec_{M}Q$, which is a contradiction. 
If b) holds, then there is a sequence $\eta_n\in L^2(\Delta(z)\langle M\overline{\otimes}L(G),M\overline{\otimes}1\rangle\Delta(z))^{\bigoplus\infty}$ such that $\|\langle \cdot\eta_n,\eta_n\rangle-\tau(\cdot)\|\rightarrow 0$, $\|\langle\eta_n\cdot,\eta_n\rangle-\tau(\cdot)\|\rightarrow 0$, and $\|y\eta_n-\eta_n y\|_2\rightarrow 0$, for every $y\in\Delta(N z)$ (see \cite[Remark 7.1]{CIOS21}).
 Since  $S\subset N$ has the relative property (T),   by \cite[Proposition 4.7]{Po01b},  so does $\Delta(S z)\subset\Delta(N z)$. Hence, there is a nonzero $\eta\in L^2(\Delta(z)\langle M\overline{\otimes}L(G),M\overline{\otimes} 1\rangle\Delta(z))$ such that $y\eta=\eta y$, for every $y\in\Delta(S z)$. 
Then $\zeta=\eta^*\eta\in  L^1(\Delta(z)\langle M\overline{\otimes}L(G),M\overline{\otimes} 1\rangle\Delta(z))$ is nonzero and satisfies $\zeta\geq 0$ and $y\zeta=\zeta y$, for every $y\in\Delta(S z)$. Let $t>0$ such the spectral projection $a={\bf 1}_{[t,\infty)}(\zeta)$ of $\zeta$ is nonzero. Then $a\in \Delta(S z)'\cap \Delta(z)\langle M\overline{\otimes}L(G),M\overline{\otimes} 1\rangle\Delta(z)$. As $ta\leq\zeta$, we get that $\text{Tr}(a)\leq{\text{Tr}(\zeta)}/{t}<\infty$.  Theorem \ref{corner} implies that $\Delta(S z)\prec_{M\overline{\otimes}L(G)}M\overline{\otimes} 1$.  Applying Proposition \ref{CIK13} again, we get that $Sz\prec_{M}Q$ and hence $S\prec_{M}Q$, a contradiction.
\vskip 0.03in
2. 
Let $X\subset \Delta(A)$ be an arbitrary amenable von Neumann subalgebra. Since $X$ and $\Delta(B)$ commute, by Theorem \ref{PV12} we have that either  a) $X \prec_{M\overline{\otimes}L(G)} M\overline{\otimes} 1$ or b) $ \Delta(B)$ is amenable relative to $M\overline{\otimes} 1$ inside $M\overline\otimes L(G)$. If a) holds for all such $X$, then \cite[Corollary F.14]{BO08}  implies that c) $\Delta(A)\prec_{M\overline{\otimes}L(G)} M\overline{\otimes} 1$. If b) holds, by applying Theorem \ref{PV12} we get that either d) $\Delta(B) \prec_{M\overline\otimes L(G)}M\overline{\otimes} 1$  or e) $\Delta (A\vee B)$ is amenable relative to $M \overline{\otimes} 1$ inside $M\overline{\otimes} L(G)$. If e) holds then applying Theorem \ref{PV12} again we further obtain either f) $\Delta(A\vee B)\prec_{M\overline{\otimes}L(G)} M\overline{\otimes} 1$ or g) $\Delta (N)$ is amenable relative to $M \overline{\otimes} 1$ inside $M\overline{\otimes} L(G)$. By Proposition \ref{CIK13}, d) implies that $B\prec_{M} Q$ while c) and f) imply $A\prec_{M} Q$. So it remains to treat g). If g) holds, then 
by arguing as in part 1.\ we get that $S\prec_{M}Q$, which is a contradiction.\end{proof}
\vskip 0.08in
For our results we need a strengthening of the notion of primeness for von Neumann algebras.

\begin{definition}\label{def:vprime} A von Neumann algebra $M$ is called \emph{s-prime} if it satisfies the following property: given any non-zero projection $ p\in M$, if $A, B\subseteq pMp$ are commuting von Neumann subalgebras such that $A\vee B \subseteq pMp$ has finite index, then either $A$ or $B$ is finite dimensional.\end{definition}
From the definition, it is clear that s-primeness implies primeness. The converse however, does not hold true in general. Constructing such examples is typically involved technically and exceeds the purpose of this paper. However, to give the reader a flavor on the presentation of such factors, we highlight an example below that arises from \cite{DHI16} (see also \cite{CD19}). Consider any action by outer automorphisms  $K\curvearrowright^\rho \mathbb F_2$ of a finite group $K$ on the free group with two generators, $\mathbb F_2$. Now let $K \curvearrowright^{\tilde \rho}  \mathbb F_2 \times \mathbb F_2$ be the canonical diagonal action and let $K= (\mathbb F_2 \times \mathbb F_2 )\rtimes_{\tilde \rho} K$ be the corresponding semidirect product group. \cite[Theorem C]{DHI16} yields that $L(K)$ is a prime factor. However, one can easily see that $L(K)$ is not s-prime as it contains the finite index non-prime subfactor, $L(\mathbb  F_2\times \mathbb F_2)= L(\mathbb F_2)\overline \otimes L(\mathbb F_2)$. Additional examples of prime factors associated with fibered products and other groups have been constructed in \cite{DHI16,CD19,Dr22}.

Using our preliminary results together with standard technology on wreath product von Neumann algebras from \cite{Po03,Io06,IPV10} we now derive the main primeness result of the section.
\begin{theorem}\label{primewlp} Let $Q$ be any non-trivial tracial von Neumann algebra and let $\Gamma$ be any icc subgroup of a hyperbolic group. Then any property (T) wreath-like product factor $ M\in \mathcal W\mathcal R(Q,\Gamma)$ is s-prime.    
\end{theorem}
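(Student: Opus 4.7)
I will argue by contradiction. Supposing $A,B\subseteq pMp$ are commuting with $A\vee B\subseteq pMp$ of finite index and both infinite-dimensional, I will deduce $M\prec_M Q^\Gamma$, which contradicts Lemma \ref{findexint} since $\Gamma$ is icc and hence infinite. The core engine is Theorem \ref{comutcontrol}(2), applied to $M=Q^\Gamma\rtimes_{\beta,w}\Gamma$ with $G=\Gamma$ (an icc subgroup of a hyperbolic group, hence non-elementary), after reducing to diffuse subalgebras of $A$ and $B$.

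\textbf{Reduction and commutant control.} Because $A\vee B\subseteq pMp$ has finite index, Proposition \ref{findex}(3) gives that $\mathcal Z(A\vee B)$ is finite-dimensional; the inclusions $\mathcal Z(A),\mathcal Z(B)\subseteq \mathcal Z(A\vee B)$ then force $A$ and $B$ to be finite direct sums of factors. Being infinite-dimensional, each contains a diffuse direct summand; let $e_A\in\mathcal Z(A)$, $e_B\in\mathcal Z(B)$ be the corresponding central projections and set $A_0:=Ae_A$, $B_0:=Be_B$. Since $A_0$ and $B_0$ are two-sided ideals of $A$ and $B$, the groups $\mathscr U(A)$ and $\mathscr U(B)$ sit inside $\mathscr N_{pMp}(A_0\vee B_0)$; taking double commutants gives $A\vee B\subseteq \mathscr N_{pMp}(A_0\vee B_0)''=:N$. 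Because $M$ has property (T), so do $pMp$ (a corner of a property (T) factor) and $S:=A\vee B$ (of finite index in $pMp$). I verify that $S\not\prec_M Q^\Gamma$: otherwise Lemma \ref{lemma.elemetary.facts.intertwinining}(5) would give $pMp\prec_M Q^\Gamma$, hence $M\prec_M Q^\Gamma$ (any corner of $pMp$ is a corner of $M$), and then Lemma \ref{findexint} would force $\Gamma$ to be finite. Applying Theorem \ref{comutcontrol}(2) to $A_0,B_0\subseteq pMp$ with the property (T) subalgebra $S\subseteq N$ therefore yields $A_0\prec_M Q^\Gamma$ or $B_0\prec_M Q^\Gamma$.

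\textbf{Mixing propagation and conclusion.} By symmetry assume $A_0\prec_M Q^\Gamma$. Theorem \ref{corner} provides a nonzero projection $p_1\in A_0$, a partial isometry $v\in M$ with $v^*v=p_1$, and an injective $*$-homomorphism $\theta:p_1A_0p_1\hookrightarrow Q^\Gamma$ with $\theta(x)v=vx$; the image $A_1:=\theta(p_1A_0p_1)\subseteq Q^\Gamma$ is diffuse. Since $B_0$ commutes with $p_1A_0p_1$, the relations $\theta(x)v=vx$ and $xv^*=v^*\theta(x)$ give $vB_0v^*\subseteq A_1'\cap M$. The crucial input is that the cocycle action $\beta$ permutes the tensor components via $\beta_g(Q^c)=Q^{gc}$ and is therefore mixing, so by a standard quasi-normalizer argument for mixing inclusions one has $A_1'\cap M\subseteq Q^\Gamma$; in particular $vB_0v^*\subseteq Q^\Gamma$. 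Hence $v\,p_1(A_0\vee B_0)p_1\,v^*=A_1\vee vB_0v^*\subseteq Q^\Gamma$, so $p_1(A_0\vee B_0)p_1\prec_M Q^\Gamma$. Now $A_0\vee B_0$ differs from $A\vee B$ only by the adjoined finite-dimensional summands $A(1-e_A)$ and $B(1-e_B)$, so it has finite index in $A\vee B$, and hence in $pMp$ by Proposition \ref{findex}(5); since $p_1\in A_0\vee B_0$, Proposition \ref{findex}(1) gives $[p_1Mp_1:p_1(A_0\vee B_0)p_1]<\infty$. Lemma \ref{lemma.elemetary.facts.intertwinining}(5) then upgrades $p_1(A_0\vee B_0)p_1\prec_M Q^\Gamma$ to $p_1Mp_1\prec_M Q^\Gamma$, and hence to $M\prec_M Q^\Gamma$, contradicting Lemma \ref{findexint}. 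The main obstacle I anticipate is the mixing step, i.e., verifying that any diffuse subalgebra $A_1\subseteq Q^\Gamma$ satisfies $A_1'\cap M\subseteq Q^\Gamma$ despite the cocycle twist in $\beta$.
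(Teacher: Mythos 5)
Your plan and the first half of your argument align with the paper: the reduction via Proposition \ref{findex}(3), the use of property (T) to rule out $A\vee B\prec_M Q^\Gamma$ via Lemma \ref{findexint}, and the application of Theorem \ref{comutcontrol}(2) to get $A_0\prec_M Q^\Gamma$ (up to minor cosmetic differences: the paper shrinks $p$ to make $A\vee B$ a factor rather than passing to diffuse central summands). The gap is in the mixing step, and you correctly flagged it as the weak link, but the problem is more serious than a cocycle bookkeeping issue.

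The assertion that any diffuse $A_1\subseteq Q^\Gamma$ satisfies $A_1'\cap M\subseteq Q^\Gamma$ is \emph{false} in this generality. Take $\Gamma$ with an element $g$ of order $2$ (icc subgroups of hyperbolic groups may well have torsion, e.g.\ $\mathrm{PSL}(2,\mathbb Z)\cong(\mathbb Z/2)\ast(\mathbb Z/3)$), take the cocycle to be trivial so $\beta=\lambda$, take $Q$ a II$_1$ factor, and let $A_1=\bigl(Q^{\{e,g\}}\bigr)^{\lambda_g}\subseteq Q^\Gamma$ be the flip-fixed subalgebra of $Q^e\,\overline\otimes\, Q^g$, which is a diffuse index-$2$ subfactor. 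Then $\lambda_g$ restricts to the identity on $A_1$, so $u_g\in A_1'\cap M$ but $u_g\notin Q^\Gamma$. More generally, the ``standard quasi-normalizer argument for mixing inclusions'' you are invoking requires $A_1\not\prec_{Q^\Gamma}Q^F$ for all finite $F$; but that hypothesis is exactly what is \emph{not} available here. Indeed, the paper proves the opposite: it uses property (T) of $A$ (together with $E_{Q^{F_n}}\to\mathrm{Id}$ pointwise and the uniform convergence on $\mathscr U$ forced by property (T)) to upgrade $A\prec_M Q^\Gamma$ to $A\prec_M Q^F$ for a \emph{finite} $F$, which puts the image in precisely the ``small'' regime where the unconditional relative-commutant control fails. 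Your proof skips this localization step entirely, and without it there is no leverage on the relative commutant.

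The paper resolves this by proving the strictly weaker Claim \ref{controlcom}: for $D\subseteq Q^F$ diffuse, $D'\cap rMr\subseteq PK$ where $K=F^{-1}F$ is finite but in general $\neq\{e\}$. (This is consistent with the counterexample above, where $K=\{e,g\}\ni g$.) The key mechanism is that for $g\in\Gamma\setminus K$ one has $gF\cap F=\varnothing$, so $\beta_g$ carries $Q^F$ to the tensor-independent $Q^{gF}$, and then a Fourier-coefficient estimate using a weakly null sequence of unitaries in $D$ kills all $y_g$ with $g\notin K$. Containment in $PK$ still suffices because the finitely many group-element corrections can be absorbed via the projection formula $\mathcal P_{PK}(x)=\sum_{g\in K}E_P(xu_{g^{-1}})u_g$ to conclude $A\vee(A'\cap pMp)\prec_M P$, and one then finishes via Lemmas \ref{Theorem.PP86}, \ref{lemma.elemetary.facts.intertwinining}(4), and \ref{findexint} as you anticipated. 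To repair your argument you would need both missing ingredients: (a) the property-(T) localization $A\prec_M Q^F$, and (b) the relative commutant bound $D'\cap rMr\subseteq PK$ with finite $K=F^{-1}F$ rather than $\subseteq Q^\Gamma$, together with the additional bookkeeping showing that a $PK$-containment is still enough.
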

\begin{proof}  Fix a projection $0\neq p \in M$ and two commuting von Neumann subalgebras $A, B\subseteq pMp$ such that $[pMp: A\vee B ]<\infty$.  Since $M$ is a factor, we can apply Lemma \ref{Theorem.findex}(3) to obtain that $\mathcal Z(A\vee B)$ is completely atomic. Hence, we may assume that $A\vee B$ is a factor, up to replacing $p$ by a smaller projection. This implies that both $A$ and $B$ are factors, and thus $A\vee B=A\overline{\otimes}B$.
Since $M$ has property (T), so does $pMp$. Since $[pMp:A\vee B]<\infty$, then using \cite[Proposition 5.7.1) and Proposition 4.6.1)]{Po01b}, we get that both $A$
and $B$ have property (T).  

Now assume by contradiction that neither $A$ nor $B$ are finite dimensional. This implies that both $A$ and $B$ are diffuse von Neumann algebras.  

Denote $P=\overline{\otimes}_\Gamma Q\subset M$. Notice that since $[pMp: A\vee B]<\infty$ and $\Gamma $ is infinite then using Lemma \ref{findexint} and part 5) in Lemma \ref{lemma.elemetary.facts.intertwinining} we have that $A\vee B \nprec_M P $. Thus   using part 2) in Theorem \ref{comutcontrol} (for $R= A \vee B$)  we have either $A\prec_M P$ or $B\prec_M P$. Due to symmetry we may assume that $A\prec_M P$. 

Next we argue there exists a finite subset $F\subset \Gamma$  such that \begin{equation}\label{intfinite}A\prec_M \overline{\otimes}_F Q.\end{equation}

As $A\prec_M P$ one can find nonzero projections $a_0\in A$, $b_0\in P$, a partial isometry $w\in b_0Ma_0$, and a $\ast$-isomorphism onto its image $\psi: a_0 A a_0 \ra \psi(a_0Aa_0)=: P_0\subseteq b_0 Pb_0$ such that \begin{equation}\label{inter7}\psi(x)w=wx\text{ for all }x\in a_0Aa_0. \end{equation} Next we show that one can find a finite subset $F\subset \Gamma$  such that \begin{equation}\label{inter8}P_0\prec_P \overline{\otimes}_F Q.\end{equation} Towards this note that since $A$ has property (T) then so does $a_0Aa_0$. Therefore $P_0$ has property (T) as well. Now fix an exhaustion $F_n \nearrow \Gamma$  by a sequence of increasing finite subsets, where $n\geq 1$. Thus, if $P_n:= \overline \otimes_{F_n} Q$ we notice that $(P_n)_n \subset P$ forms an increasing sequence of von Neumann algebras such that $\overline{\cup_n P_n}^{\rm wot}=P$. Hence, letting $E_{P_n}$ be the conditional expectation from $P$ onto $P_n$ we have that $E_{P_n} \rightarrow {\rm Id}$, $\|\cdot \|_2$-pointwise on $P$. As $P_0$ has property (T) it follows that for every $\varepsilon >0$ there is $n_\varepsilon \in \mathbb N$ such that $\|E_{P_n}(x)-x\|_2< \varepsilon$ for all $x\in \mathscr U(P_0)$ and $n\geq n_\varepsilon$. Picking $\varepsilon\leq  2^{-1}\tau(b_0)^{1/2}$, and using the triangle inequality above we have  $\|E_{P_n}(x)\|_2\geq \|x\|_2 -\|x-E_{P_n}(x)\|_2\geq \|x\|_2-\varepsilon\geq 2^{-1} \tau(b_0)^{1/2} >0$, for all $x\in \mathscr U(P_0)$. Then using Theorem \ref{corner} we get the intertwining \eqref{inter8}, as desired.

Finally, the intertwinings \ref{inter8} and \ref{inter7} together with the transitivity property from \cite[Lemma 1.4.5]{IPP05} (see also \cite[Remark 3.8]{Va08}) yield the intertwining \eqref{intfinite}.

Consequently, letting  $S= \overline \otimes_F Q$,  one can find nonzero  projections $a\in A$, $r\in S$, a nonzero partial isometry $v\in rM a$ and a $\ast$-isomorphism onto its image $\phi: aAa \rightarrow \phi(aAa)=:D \subseteq rSr$ such that \begin{equation}\label{intrel}
    \phi(x)v=vx \text{ for all } x\in aAa.
\end{equation}  
The intertwining relation also implies that $vv^*\in D'\cap r Mr$  and $v^*v\in (A'\cap pMp) a$.

Next we prove the following: 

\begin{claim}\label{controlcom}There exists a finite set $K \subset \Gamma$ such that $D'\cap rMr\subseteq PK$.  
\end{claim}
\emph{Proof of the claim.} Fix $y\in D'\cap rMr$ with $\| y\|_\infty \leq 1$. Let $y =\sum_g y_gu_g$ be its Fourier expansion, where $y_g\in r P$  for all $g\in \Gamma$. Since $xy =yx$ for all $x\in D\subset P$ we get that \begin{equation}\label{intg}x y_g = y_g \sigma_g (x)\text{ for all }x\in D, g\in \Gamma.\end{equation} Let $K= F^{-1}F$ and notice that for every $g \in \Gamma \setminus K$ we have that $gF \cap F =\emptyset$. 

Next, we prove that $y_g=0$ for all $g\in \Gamma \setminus K$ which will conclude the proof of the claim. Fix $\varepsilon>0$. Since the Fourier coefficients satisfy $\|y_g\|_\infty\leq \|y\|_\infty\leq 1$ one can pick $a_1,...,a_k \in rS$ and $b_1,...,b_k \in \overline{\otimes}_{\Gamma\setminus F} Q$ such that \begin{eqnarray}&&\label{ineqnorm}\|\sum^k_{i=1} a_i\otimes b_i\|_\infty\leq 1, \text{ and}\\ &&\label{ineqnorm2}\| y_g -\sum^k_{i=1} a_i \otimes b_i\|_2\leq \varepsilon.\end{eqnarray} Using the intertwining relation \eqref{intg} then  inequalities \eqref{ineqnorm2}, \eqref{ineqnorm}, and \eqref{ineqnorm2} again,  basic calculations show that for every unitary $x\in D$ we have 
\begin{equation}\label{zeroexp}\begin{split}
   \| E_{S}(y_g y_g^*)\|_2 &=  \|x E_{S}(y_g y_g^*)\|_2=\|E_{S}(y_g \sigma_g (x) y_g^*)\|_2 \\
    &\leq \varepsilon +  \| E_{S}((\sum^k _{i=1}a_i\otimes b_i) \sigma_g (x) y^*_g)\|_2 \\ 
    &\leq 2\varepsilon +  \| E_{S}((\sum^k _{i=1}a_i\otimes b_i) \sigma_g (x) (\sum^k _{j=1}a^*_j\otimes b^*_j))\|_2 \\
     &= 2\varepsilon +  \| E_{S}(\sum^k _{i,j=1}(a_i a_j^*) \otimes (b_i\sigma_g(x) b^*_j))\|_2. \end{split}
\end{equation}
Using that $a_i a_j^*\in S$, the $S$-bimodularity of the expectation, $E_S(x)=\tau(x)1$ for all $x\in \overline{\otimes}_{\Gamma \setminus K} Q$, and the triangle inequality  we can also see that 
  \begin{equation}\label{zeroexp2}\begin{split}\| E_{S}(\sum^k _{i,j=1}(a_i a_j^*) \otimes (b_i\sigma_g(x) b^*_j))\|_2  & =\|\sum^k _{i,j=1} (a_i a^*_j) \otimes E_{S}(b_i \sigma_g (x) b^*_j)\|_2 \\ &=\|\sum^k _{i,j=1} \tau( \sigma_g (x) b^*_jb_i)  a_i a^*_j \|_2\\
  &\leq\sum^k _{i,j=1} |\tau( x \sigma_{g^{-1}} (b^*_jb_i))| \| a_i a^*_j \|_2.\end{split}\end{equation}

Since $A$ is diffuse, so is  $D$, and thus one can find a sequence $(x_n)_n\subset \mathscr U(D)$ such that $x_n \rightarrow 0$ weakly, as $n\rightarrow \infty$. In particular, we have that $\lim_{n\rightarrow \infty}|\tau( x_n \sigma_{g^{-1}} (b^*_jb_i))|=0$, for all $i,j\in \overline {1,k}$. Therefore, applying inequalities \eqref{zeroexp} and \eqref{zeroexp2} to $x=x_n$ and taking the limit as $n\rightarrow \infty$ we get that $\| E_{S}(y_g y_g^*)\|_2\leq 2\varepsilon$. Since $\varepsilon>0$ was arbitrary we get $E_{S}(y_g y_g^*)=0$ and hence $y_g=0$, as desired. $\hfill\blacksquare$
\vskip 0.07in
Now, fix  $z\in A'\cap pMp$. Using relation \eqref{intrel} twice we can see that for all $x\in aAa$ we have  $\phi(x) vzv^* = vx zv^*= vzx v^*= vz v^*\phi (x)$ and therefore  $vzv^*\in D'\cap r Mr$.  Thus $v (A'\cap pMp) v^*\subseteq D'\cap rMr$. From \eqref{intrel} we also have that $vAv^*=Dvv^*$ where $vv^*\in D'\cap rMr$. Combining these with the Claim \ref{controlcom} we can see that  
\begin{equation*}
    v ((A) (A'\cap pMp))v^*= v A v^* v (A'\cap p Mp) v^*\subset D (D'\cap rMr) \subseteq PK.
\end{equation*}
This implies that \begin{equation}\label{contain}
 v (A \vee (A'\cap pMp) )v^*\subseteq PK.   
\end{equation} Let $\mathcal P_{PK}$ be the orthogonal projection from $L^2(M)$ onto the $\|\cdot\|_2$-closure of ${\rm span}\{ xu_g \,:\, x\in P, g\in K\}$. One can check for every $x\in M$ we have \begin{equation}\label{projformula}
    \mathcal P_{PK}(x)= \sum_{g\in K} E_{P}(xu_{g^{-1}})u_g.
\end{equation} By \eqref{contain}, for all $x\in v (A \vee (A'\cap pMp) )v^*$ we have that $\mathcal P_{PK}(x)=x$. Using this together with formula \eqref{projformula} and basic calculations,  for all unitaries $u\in v^*v(A \vee (A'\cap pMp) v^*v$ we have 

\begin{equation*}\begin{split}
    0<\tau(v^*v)=\|u\|^2_2=\|v u v^*\|^2_2= \| P_{PK}(x)\|_2^2= \sum_{g\in K}\| E_P(vu v^*u_{g^{-1}})\|_2^2.
\end{split}
    \end{equation*}

Using Theorem \ref{corner}(3), this implies that $A\vee (A'\cap pMp) \prec_M P$. By Lemma \ref{lemma.elemetary.facts.intertwinining}(2) there is a non-zero projection $z\in \mathcal Z (A'\cap pMp)$ such that 
\begin{equation}\label{eq11}
    (A\vee (A'\cap pMp))z\prec_M^s P.
\end{equation}
Using that $A \vee B \subseteq pMp$ has finite index and $A \vee B \subseteq A \vee (A'\cap pMp)$, it follows that $A\vee (A'\cap pMp) \subseteq pMp$ also has finite index.
By Lemma \ref{Theorem.PP86}(2) we deduce that $pMp\prec_{pMp} (A\vee (A'\cap pMp))z$. In combination with \eqref{eq11} we can apply Lemma \ref{lemma.elemetary.facts.intertwinining}(4) and deduce that
 $pMp\prec_M P$, which by Lemma \ref{findexint} entails that $\Gamma$ is finite, a contradiction.    
\end{proof}

\section{A Class of Existentially Closed II$_1$ Factors}\label{Section.ec.factors}

 A group $G$ is called existentially closed if every finite set of equations and inequalities defined over $G$ and is soluble over $G$ is actually soluble in $G$. Existentially closed groups actually coincide with the class of non-trivial algebraically closed groups. These groups display remarkable structural properties and have been intensively studied over the years; the reader may consult  \cite{HS88} for a comprehensive account on this direction.  

More recently, in the field of continuous model theory, Farah-Goldbring-Hart-Sherman \cite{FGHS16} introduced a natural von Neumann algebraic counterpart of these objects.  Roughly speaking, a von Neumann algebra $M$ is called existentially closed if whenever a system of equations with coefficients in $M$ has a solution in an extension of $M$, then it has an approximate solution in $M$.  In more rigouros terms, let $M \subseteq N$ be an inclusion of separable tracial von Neumann algebras.  Following \cite{FGHS16}, one says that $M$ is \emph{existentially closed in $N$} if there exists an embedding $j: N \hookrightarrow M^\omega$, whose restriction on $N$ is the diagonal embedding $M \subset M^\omega$. A separable tracial von Neumann algebra $M$ is called \emph{existentially closed} 
if it is existentially closed in any separable extension, $M\subseteq N$. For the remaining part of the paper we assume the Continuum Hypothesis. 

Next, we recall several fundamental facts of existentially closed von Neumann algebras that will be used freely throughout this paper. The first three are well-known and were proved in \cite{GHS13,FGHS16}, the fourth item follows directly from definitions, while the last has been established in \cite{Go18}. 

\begin{theorem}\label{theorem.model.theory} \emph{\cite{GHS13,FGHS16,Go18}}\label{propecfactors} The following hold true:
\begin{enumerate}
    \item Every existentially closed tracial von Neumann algebra is a II$_1$ factor satisfying McDuff's property;
    \item Every separable tracial von  Neumann algebra embeds into a separable existentially closed  factor;
    \item Every separable tracial von Neumann algebra embeds into any ultrapower $M^\omega$ of any existentially closed factor $M$;
    \item If $M_1\subseteq M_2 \subseteq \cdots \subseteq  M_n \subseteq \cdots$ is a chain of existentially closed factors, then their union $M= \overline {\cup_n M_n}^{\rm wot}$ is also existentially closed;
    \item If $M$ is an existentially closed factor, then for any property (T) subfactor $N\subset M$, its double coummutant satisfies $(N'\cap M)'\cap M= N$.
\end{enumerate}

\end{theorem}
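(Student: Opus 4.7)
The plan is to dispatch items (1)--(4) by translating the existentially closed (e.c.) hypothesis directly into embeddings into $M^\omega$, and to treat item (5) separately as the only substantive statement.

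For (1), factoriality follows by embedding $M \hookrightarrow M * L(\mathbb Z)$ with canonical unitary $u$: any non-scalar $z \in M$ satisfies $\|[u,z]\|_2 > 0$ in the free product, which is an existential condition over $M$ realized in this extension, so by the e.c.\ hypothesis it is approximately realized in $M^\omega$. But if $z$ lies in $\mathcal Z(M)$, then $z$ is automatically central in $M^\omega$, forcing a contradiction. For the McDuff property, Connes' characterization reduces the problem to producing a unital embedding $R \hookrightarrow M' \cap M^\omega$. The inclusion $M \subset M \bar\otimes R$ is a separable extension, so e.c.\ yields an embedding $M \bar\otimes R \hookrightarrow M^\omega$ restricting to the diagonal on $M$, and the image of $1 \otimes R$ automatically lies in $M' \cap M^\omega$.

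For (2), perform the standard transfinite chain construction available under the Continuum Hypothesis: enumerate all finitely generated existential types over a running separable algebra and, at each stage, embed the current algebra into a separable factor realizing the next type. The union is still separable, and a chain argument shows it is e.c. For (3), given a separable extension $M \subset N$ with $N$ arbitrary, it suffices to apply the definition: the embedding $j \colon N \hookrightarrow M^\omega$ exists and restricts to the diagonal on $M$; for a general separable $N$ not a priori containing $M$, embed both $M$ and $N$ into a common separable tracial extension (e.g.\ $P := M * N$) and apply the same principle to $M \subset P$. For (4), fix a separable extension $M_\infty \subset N$ where $M_\infty = \overline{\cup_n M_n}^{\rm wot}$. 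For each $n$, the inclusion $M_n \subset N$ is a separable extension of an e.c.\ factor, producing an embedding $j_n \colon N \hookrightarrow M_n^\omega \subset M_\infty^\omega$ restricting to the diagonal on $M_n$. A standard diagonal selection across the $j_n$'s, compatible with an exhausting sequence of finite subsets of $N$, yields an embedding $N \hookrightarrow M_\infty^\omega$ extending the diagonal on $M_\infty$.

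The only substantive item is (5), which is proved in \cite{Go18}. Suppose $x \in (N' \cap M)' \cap M$ but $x \notin N$. The strategy is to construct a separable extension $M \subset \widetilde M$ carrying an automorphism $\alpha \in \Aut(\widetilde M)$ that fixes $N$ pointwise but moves $x$ by a definite amount in $\|\cdot\|_2$; one natural way is an HNN-type construction over the inclusion $N \subset M$ that acts nontrivially on $x$. Existential closedness transports this configuration into $M^\omega$: one obtains an element commuting with $N$, approximating a would-be image of $x$, and separated from $x$ in $\|\cdot\|_2$. Property (T) of $N$ now upgrades the resulting asymptotic $N$-fixing map into an honest one modulo an inner perturbation coming from the relative commutant $N' \cap M^\omega$; combining this with the assumption $x \in (N' \cap M)' \cap M$ forces the image of $x$ to coincide with $x$ itself, contradicting the construction. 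The main obstacle is precisely this last step: simultaneously engineering the extension $\widetilde M$ so that $\alpha$ restricts to the identity on $N$, exploiting property (T) to promote approximate $N$-invariance into genuine $N$-invariance inside $M^\omega$, and using the double-commutant hypothesis on $x$ to close the loop. Items (1)--(4), by contrast, are routine translations of the e.c.\ definition once one is comfortable with it.
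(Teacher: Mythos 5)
The paper does not actually prove this theorem: it is a compendium recalled from the literature, with items (1)--(3) attributed to \cite{GHS13,FGHS16}, item (4) declared to ``follow directly from definitions,'' and item (5) attributed to \cite{Go18}. There is therefore no in-paper argument to compare against, so I will assess your sketches on their own merits.

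Your treatments of (1)--(4) are essentially correct. For factoriality in (1), the argument via $M \hookrightarrow M * L(\Z)$ works: a non-scalar $z \in \cZ(M)$ would remain central in $M^\omega$ (centrality of $z$ in $M$ is a coordinate-wise condition on sequences), yet an embedding $j\colon M*L(\Z) \hookrightarrow M^\omega$ restricting to the diagonal on $M$ produces $j(u)$ with $\|[j(u),z]\|_2 = \|[u,z]\|_2 = \sqrt{2}\,\|z-\tau(z)1\|_2 > 0$, a contradiction. You should also add a line ruling out finite dimensionality (no matrix algebra is existentially closed, or note that McDuffness forces the type to be II$_1$). The McDuff argument via $M \subset M\bar\otimes R$ is exactly right. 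In (2), the appeal to the Continuum Hypothesis is unnecessary; the chain construction is just downward L\"owenheim--Skolem and union-of-chains, and CH enters the paper elsewhere (for ultrapowers). Items (3) and (4) are as you describe.

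Item (5) is the only substantive point, and while the ingredients are present in your sketch, two steps are garbled. First, once you have $j\colon \widetilde M \hookrightarrow M^\omega$ with $j|_M$ the diagonal, the conjugation $\Ad(j(w))$ fixes $N$ \emph{exactly}, not merely asymptotically, since $j$ is a genuine trace-preserving $*$-homomorphism and $w$ commutes with $N$ inside $\widetilde M$. There is no ``asymptotic-to-honest'' promotion to perform, so the sentence about upgrading the $N$-fixing map modulo an inner perturbation does not describe a real step. Second, the actual role of property (T) is to give w-spectral gap in \emph{every} ambient extension (see the discussion around \cite{Ta22} and \cite{Po09} in the paper), namely $N'\cap M^\omega = (N'\cap M)^\omega$. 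With this in hand, the argument closes cleanly as follows. Suppose $x \in (N'\cap M)'\cap M \setminus N$, and set $\widetilde M := M *_N (N\bar\otimes L(\Z))$ (or the HNN extension of $M$ over $(N,\id_N)$). The canonical unitary $w$ lies in $N'\cap\widetilde M$, and a standard amalgamated-free-product computation gives $\|wxw^*-x\|_2 = \sqrt{2}\,\|x-E_N(x)\|_2 > 0$. Existential closedness yields $j$ as above, so $j(w) \in N'\cap M^\omega = (N'\cap M)^\omega$. Since $x$ commutes with $N'\cap M$, it commutes coordinate-wise with $(N'\cap M)^\omega$ and hence with $j(w)$; thus $x = j(w)xj(w)^* = j(wxw^*)$, contradicting $\|j(wxw^*)-j(x)\|_2 = \|wxw^*-x\|_2 > 0$. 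If you present this item, I would recommend either fleshing it out along these lines or, as the paper itself does, simply citing \cite{Go18}.
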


Throughout the paper we will denote by $\mathcal E$ the class of all existentially closed factors. 

Next, we highlight a family of existentially closed \ factors which in spirit resembles the inductive limit factors considered in \cite{Po09} and occurs naturally by combining the aforementioned model theoretic properties with the more recent von Neumann algebraic techniques from \cite{CDI22}. In fact, a very similar construction has been already considered in \cite{CDI22} and various structural results have been obtained there; the interested reader may consult \cite[Section 6]{CDI22}.  
\vskip 0.05in
Henceforth we will denote by $\mathcal {ET}$  the class of all existentially closed  factors that can be presented as inductive limits \begin{equation*}Q=\overline{\bigcup_{n\in \mathbb N} N_n}^{\rm wot},\end{equation*} where $(N_n)_{n\in \mathbb N}$ is an increasing sequence of property (T) s-prime II$_1$ factors satisfying that for every $n$ there is $r>n$ such that $N_n\subset N_r$ has infinite Jones index. 

This family of factors is quite vast. In fact, recycling the interlacing argument from the proof of \cite[Proposition 8.1]{CDI22} we show the following.

\begin{theorem}\label{ectensorindec1}\label{theorem.class.et.embeddable}
The class $\mathcal {ET}$ is {\it embedding universal}, i.e., every separable II$_1$ factor embeds into an element of $\mathcal{ET}$. In particular, $\mathcal {ET}$ is uncountable.  
\end{theorem}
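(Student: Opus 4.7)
The plan is to produce, for any given separable II$_1$ factor $M$, a member of $\mathcal{ET}$ containing $M$ via an interlacing construction that alternates between property (T) $s$-prime factors (supplied by Theorem \ref{theorem.cdi}(2) combined with Theorem \ref{primewlp}) and separable existentially closed factors (supplied by Theorem \ref{propecfactors}(2)). Concretely, the strategy is to build inductively a tower
\begin{equation*}
M \subseteq N_1 \subseteq E_1 \subseteq N_2 \subseteq E_2 \subseteq \cdots
\end{equation*}
where each $N_n$ is a property (T) wreath-like product factor and each $E_n$ is existentially closed, and then to set $Q := \overline{\bigcup_n N_n}^{\mathrm{wot}} = \overline{\bigcup_n E_n}^{\mathrm{wot}}$.

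To start the tower, I would apply Theorem \ref{theorem.cdi}(2) to $M$ to obtain an embedding of $M$ into a property (T) wreath-like product factor $N_1 \in \mathcal{WR}(Q_1, B_1)$ with $B_1$ an icc hyperbolic property (T) group; Theorem \ref{primewlp} then guarantees that $N_1$ is $s$-prime. Next, by Theorem \ref{propecfactors}(2), embed $N_1$ into a separable existentially closed factor $E_1$. Applying Theorem \ref{theorem.cdi}(2) to $E_1$ produces a property (T) $s$-prime factor $N_2 \supseteq E_1$, and iterating the procedure delivers the interlaced chain.

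The main subtlety is verifying the infinite Jones index condition required to place $Q$ in $\mathcal{ET}$. For this, note that each $N_n$ has property (T) while each $E_n$, being existentially closed, is McDuff by Theorem \ref{propecfactors}(1). Property (T) is incompatible with McDuffness: a property (T) II$_1$ factor $N$ has atomic $N'\cap N^\omega$ by Connes' theorem, while a McDuff factor has $E'\cap E^\omega$ of type II$_1$. Since property (T) also passes through finite-index inclusions of II$_1$ factors (\cite[Propositions 4.6 and 5.7]{Po01b}), if $[E_n:N_n]$ were finite then $E_n$ would inherit property (T), a contradiction. Therefore $[E_n:N_n]=\infty$, and by multiplicativity of the index (Proposition \ref{findex}(5)) we also have $[N_{n+1}:N_n]=\infty$ for every $n$. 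This is precisely the cofinal infinite-index condition in the definition of $\mathcal{ET}$.

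Finally, since $Q$ is the $\mathrm{wot}$-closure of the increasing union of the $E_n$'s, Theorem \ref{propecfactors}(4) yields that $Q$ is itself existentially closed; the presentation by $(N_n)_n$ places $Q$ in $\mathcal{ET}$, and by construction $M \subseteq Q$. The uncountability of $\mathcal{ET}$ is then immediate: there are uncountably many pairwise non-isomorphic separable II$_1$ factors (for instance the uncountable family $\bm{\mathcal{T}}$ discussed after Theorem \ref{theorem.cdi}), each of which must embed into some member of $\mathcal{ET}$, and no single II$_1$ factor $Q$ can contain uncountably many pairwise non-isomorphic separable subfactors with distinct isomorphism types beyond continuum cardinality. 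The only nontrivial step is the infinite-index verification, which is the content of the property (T) versus McDuff dichotomy sketched above.
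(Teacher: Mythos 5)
Your interlacing construction is exactly the paper's approach: alternate between property (T) $s$-prime wreath-like factors (from Theorem \ref{theorem.cdi}(2) plus Theorem \ref{primewlp}) and separable existentially closed factors, pass to the inductive limit, invoke Theorem \ref{propecfactors}(4) for existential closedness, and derive the infinite-index condition from the incompatibility of property (T) with McDuffness together with the fact that property (T) passes up through finite-index inclusions. All of that is correct and matches the paper (the paper happens to start the tower with $M\subseteq Q_1\subseteq N_1$ rather than $M\subseteq N_1$, but that is immaterial).

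However, your argument for the uncountability of $\mathcal{ET}$ has a genuine gap. You assert that ``no single II$_1$ factor $Q$ can contain uncountably many pairwise non-isomorphic separable subfactors,'' but this is false: a fixed separable II$_1$ factor can contain continuum many pairwise non-isomorphic separable subfactors. A pure cardinality count is also inconclusive, since the number of isomorphism classes of separable II$_1$ factors is $\mathfrak{c}$, while a single separable factor already has $\mathfrak{c}$ separable subalgebras, so countably many ``big'' factors could a priori cover everything. The correct tool, which the paper cites, is Ozawa's theorem that there is no separable universal II$_1$ factor \cite[Corollary 3]{Oz02}: if $\mathcal{ET}=\{Q_1,Q_2,\dots\}$ were countable, one could assemble a single separable II$_1$ factor into which every $Q_n$ embeds (for instance a free product $\ast_n Q_n$, or an existentially closed factor containing their union), and embedding universality of $\mathcal{ET}$ would then make that factor universal, contradicting Ozawa. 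You should replace your cardinality assertion with this argument and the citation to \cite{Oz02}.

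One small inaccuracy: you attribute to ``Connes' theorem'' that property (T) factors have atomic central sequence algebra. The correct statement is that a property (T) II$_1$ factor $N$ has trivial central sequence algebra $N'\cap N^\omega=\mathbb{C}$, and the relevant reference is Connes--Jones \cite{CJ85}. This does not affect the validity of your infinite-index argument, since all you actually use is that property (T) excludes the McDuff property, but the citation and statement should be corrected.
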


\begin{proof} Fix $P$ a separable von Neumann algebra. Then pick $Q_1\in \mathcal E $ such that $P\subset Q_1$. Using Theorem \ref{theorem.cdi}(2) and Theorem \ref{primewlp} one can find a property (T), virtually prime II$_1$ factor $N_1$ such that   $Q_1\subset N_1$. Since $\mathcal E$ is embedding universal there exists a separable  II$_1$ factor $Q_2\in \mathcal E$ such that $N_1 \subset Q_2$. Since $Q_2$ is existentially closed then by part (1) in Theorem \ref{theorem.model.theory} it is McDuff; in particular, they do not have property (T). Since property (T) passes to finite index suprafactors and $N$ has property (T) it follows that the inclusion $N_1\subset Q_2$  has infinite index.  Continuing on this fashion, by  induction, one can find an increasing  sequence $(N_n)_{n\in \mathbb N} $ of property (T), virtually prime II$_1$ factors and an increasing sequence $(Q_n)_{n\in \mathbb N}$ of separable existentially closed II$_1$ factors satisfying    
\begin{equation}\label{sequence}
    P\subset Q_1\subset N_1\subset Q_2\subset N_2\subset \cdots \subset Q_n\subset N_n\subset \cdots 
\end{equation}
Let $M$ be the inductive limit II$_1$ factor arising from the sequence \eqref{sequence}. By construction we have $M= \overline {\cup_n N_n}^{\text{wot}}= \overline{\cup_n Q_n}^{\text{wot}}$. Since $Q_n\in \mathcal E$ for all $n\in \mathbb N$, then using part 4) in Theorem \ref{propecfactors} we get that $M\in \mathcal E$. Since in our construction $N_n \subset Q_{n+1}$ has infinite index then so is $N_n\subset N_{n+1}$, for all $n\in \mathbb N$. 

Thus $\mathcal {ET}$ is embedding universal and using \cite[Corollary 3]{Oz02} we conclude that  $\mathcal {ET}$ is uncountable. 
\end{proof}
\vskip 0.03in

We conclude this section by recalling the class of infinitely generic II$_1$ factors introduced in  \cite[Propositions 5.7, 5.10 and 5.14]{FGHS16} (see also  \cite[Fact 6.3.14]{AGKE20}). 

\begin{proposition}\emph{\cite{FGHS16}}\label{infgen}
There is a class of separable II$_1$ factors $\mathcal G$ satisfying the following:
\begin{enumerate}
    \item $\mathcal G$ is embedding universal,
    \item  any embedding $\pi: Q_1 \hookrightarrow Q_2$, for some $Q_1,Q_2\in  \mathcal G$ is elementary, i.e., it extends to an isomorphism $Q_1^\omega\cong Q_2^\omega$, and
    \item $\mathcal G$ is the maximum class with properties (1) and (2).
\end{enumerate}
 The elements of $\mathcal G$ are called \emph{infinitely generic II$_1$ factors}. 
\end{proposition}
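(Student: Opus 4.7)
The plan is to construct $\mathcal G$ by iterating existentially closed completions, mimicking Robinson's infinitely generic model construction adapted to continuous model theory. Set $\mathcal G_0 := \mathcal E$, the class of existentially closed II$_1$ factors. Inductively define $\mathcal G_{n+1}$ to be the subclass of $\mathcal G_n$ consisting of factors $M$ such that for every extension $M \subseteq N$ with $N \in \mathcal G_n$, $M$ is existentially closed in $N$. Finally set $\mathcal G := \bigcap_{n\ge 0} \mathcal G_n$; this is the class of \emph{infinitely generic} II$_1$ factors.

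For embedding universality (1), given a separable II$_1$ factor $P$, build an interlacing chain
\[
P = P_0 \subset P_1 \subset P_2 \subset \cdots
\]
using Theorem \ref{theorem.model.theory}(2) at each step to embed $P_n$ into an existentially closed factor at level $n$ of the hierarchy, and arranging the enumeration so that all levels are visited cofinally. Taking the inductive limit, Theorem \ref{theorem.model.theory}(4) and its higher-level analogues (proved by the same $\|\cdot\|_2$-density argument) yield a II$_1$ factor $M \supseteq P$ lying in every $\mathcal G_n$, hence in $\mathcal G$.

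For elementarity (2), given $\pi : Q_1 \hookrightarrow Q_2$ with $Q_1, Q_2 \in \mathcal G$, perform a back-and-forth construction in the ultrapowers. Under the Continuum Hypothesis, both $Q_1^\omega$ and $Q_2^\omega$ are $\aleph_1$-saturated tracial von Neumann algebras of density character $\aleph_1$. The defining property of membership in $\mathcal G$ is exactly that any quantifier-free type realized over a finite tuple by some larger member of $\mathcal G$ is already realized in each $Q_i$. Iterating this while matching countable dense subsets on both sides produces an isomorphism $Q_1^\omega \cong Q_2^\omega$ extending $\pi$, which is the Keisler--Shelah characterization of elementarity.

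For maximality (3), let $\mathcal C$ satisfy (1) and (2), fix $N \in \mathcal C$, and show by induction on $n$ that $N \in \mathcal G_n$. Using (1) for $\mathcal G$ embed $N \hookrightarrow Q$ with $Q \in \mathcal G_n$; using (1) for $\mathcal C$ embed $Q \hookrightarrow N'$ with $N' \in \mathcal C$; then property (2) for $\mathcal C$ forces $N \hookrightarrow N'$ to be elementary, and hence $N$ inherits from $Q$ the defining existential-closure property of level $n$. The main technical obstacle is setting up the generic hierarchy carefully in continuous logic, verifying that each existential-closure completion preserves the II$_1$ factor structure, and checking that the interlacing construction indeed lands in the countable intersection $\mathcal G$ rather than stopping at some finite stage.
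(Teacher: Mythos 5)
You are reconstructing a result the paper does not prove but simply imports from \cite{FGHS16} (their Propositions 5.7, 5.10, 5.14; cf.\ also \cite[Fact 6.3.14]{AGKE20}), so there is no in-paper argument to compare against. Judged on its own terms, your sketch identifies the right framework (Robinson's infinite forcing adapted to continuous logic), but the hierarchy you define is wrong in a way that makes the whole construction collapse.

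Concretely, you set $\mathcal G_0 = \mathcal E$ and then declare $\mathcal G_{n+1}$ to consist of those $M \in \mathcal G_n$ that are existentially closed in every extension $N \supseteq M$ with $N \in \mathcal G_n$. But by Theorem \ref{theorem.model.theory} and the definition of $\mathcal E$, a factor $M \in \mathcal E$ is existentially closed in \emph{every} separable extension whatsoever, hence a fortiori in every extension lying in $\mathcal G_n$. So $\mathcal G_1 = \mathcal G_0 = \mathcal E$, and by induction $\mathcal G_n = \mathcal E$ for all $n$, giving $\mathcal G = \mathcal E$. This is false: since the theory of II$_1$ factors has no model companion \cite{GHS13}, the class of infinitely generic factors is a \emph{proper} subclass of $\mathcal E$ (were they equal, $\mathcal E$ would be an elementary class and its theory would be the model companion). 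In the correct Robinson construction each stage enforces a genuinely stronger agreement property than existential closedness --- roughly, agreement with larger generic models on formulas of increasing quantifier complexity, encoded via the infinite forcing relation $M \Vdash \varphi$ --- and it is exactly this that prevents the hierarchy from stabilizing at $\mathcal E$. Your Steps (2) and (3) inherit the same flaw: the assertion that membership in $\mathcal G$ is captured by realizing quantifier-free types (again just the e.c.\ property) is not enough to get elementarity of embeddings, and the inductive transfer in (3) rests on the ill-defined hierarchy. To repair the argument you would need to formulate the continuous-logic forcing relation and define the hierarchy in terms of forcing (or, equivalently, prove existence of $\mathcal G$ as the unique maximal cofinal class on which all embeddings are elementary), which is precisely what \cite{FGHS16} does.
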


\begin{remark} The proof of Theorem \ref{theorem.class.et.embeddable} can be used to construct inductive limits of property (T) factors within any family $\mathcal G$ of existentially closed  factors that is both embedding universal and closed under inductive limits. For example, by \cite{FGHS16}, this applies when $\mathcal G$ is the family of all infinitely generic factors. \end{remark}

\section{Tensor Indecomposability Results for Factors in Class $\mathcal {ET}$}\label{Section.ec.factors.indecomposability}

It is well-known that existentially closed groups display strong structural properties, e.g., they are simple \cite[Theorem 1.8]{HS88}. Therefore, they do not admit any nontrivial direct or semi-direct product decompositions. Since existentially closed factors are von Neuman algebraic counterparts of these objects it is reasonable to expect that they share similar indecomposability properties. To stimulate the development of new technology to tackle such properties, it would be natural to first understand if there are certain types of tensor decompositions that existentially closed factors cannot have.

In this direction, by combining \cite[Corollary 2.6]{IS18} with a result from \cite{AGKE20} (see for instance \cite[Proposition 6.2.11 and Proposition 6.3.2]{AGKE20}), one can see that existentially closed \ factors $P$ do not admit any diffuse tensor decompositions of the form $P = R \overline \otimes B$, where $R$ is the hyperfinite II$_1$ factor and $B$ is an arbitrary full II$_1$ factor; in other words, $P$ is not strongly McDuff. 
However, besides this tensor indecomposability result, little is known regarding the possible tensor decompositions of existentially closed factors.

\vskip 0.05in 

In this section we make new progress on this problem by showing that the existentially closed \ factors in the class $\mathcal {ET}$ satisfy an even stronger tensor product indecomposability statement. Specifically, using a m\'elange of methods which combines spectral gap arguments and various intertwining techniques from \cite{Po02,Po09,Io11,CIK13,CKP14,Go18,CIOS21,CDI22} with the primeness result from Theorem \ref{primewlp} we obtain the following.  

\begin{theorem}\label{etprime}\label{theorem.class.et.prime} For any $P\in \mathcal {ET}$, we have that $P\ncong A\overline \otimes B$, for all II$_1$ factors $A$ and $B$, with $B$ full. 

\end{theorem}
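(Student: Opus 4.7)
The plan is to argue by contradiction. Assume $P = A \overline\otimes B$ with $A, B$ diffuse II$_1$ factors and $B$ full, and write $P = \overline{\bigcup_n N_n}^{\rm wot}$ via the defining presentation of $\mathcal {ET}$, with some $n_0 < r$ satisfying $[N_r : N_{n_0}] = \infty$. The overall strategy is: (i) use fullness of $B$ together with property (T) of each $N_n$ to prove $N_n \prec_P A$; (ii) feed this into the double-commutant identity $(N' \cap P)' \cap P = N$ valid for property (T) subfactors of the existentially closed $P$ (Theorem \ref{propecfactors}(5)), upgrading the intertwining to rigid structural information on $N_n$; and (iii) combine the resulting constraints across the chain using s-primeness of $N_{n_0}$ and $N_r$ to rule out the infinite-index inclusion $N_{n_0} \subset N_r$.

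Step (i) begins with a spectral-gap inequality in $P$ coming from $B$ being full: there exist $y_1,\dots,y_k \in B$ and $C > 0$ such that
$$
\|z - E_A(z)\|_2 \leq C \sum_{i=1}^k \|[z,\, 1 \otimes y_i]\|_2 \quad \text{for all } z \in P,
$$
obtained by applying the spectral-gap characterization of full factors to the $B$-components of $z = \sum_j a_j \otimes b_j$. Given this, suppose towards a contradiction that $N_n \nprec_P A$. Theorem \ref{corner}(2) furnishes $(u_m) \subset \mathscr U(N_n)$ with $\|E_A(xu_my)\|_2 \to 0$ for all $x, y \in P$; the vectors $\xi_m := u_m - E_A(u_m)$ then lie in $L^2(P) \ominus L^2(A)$, are asymptotically tracial and $N_n$-almost central, and satisfy $\|\xi_m\|_2 \to 1$. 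Property (T) of $N_n$ forces a subnet of $(\xi_m)$ to accumulate on a nonzero $N_n$-central tracial vector in $L^2(P) \ominus L^2(A)$, which corresponds to a nonzero projection in $N_n' \cap \langle P, e_A \rangle$ of finite trace. By Theorem \ref{corner}(4) this yields $N_n \prec_P A$, a contradiction. Hence $N_n \prec_P A$ for every $n$.

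For step (ii), from $N_n \prec_P A$ I would use Lemma \ref{lemma.elemetary.facts.intertwinining}(2) to pass to a strong intertwining $N_n z \prec_P^{\rm s} A$ for a nonzero projection $z \in \mathscr N_P(N_n)' \cap P$, and then Lemma \ref{lemma.elemetary.facts.intertwinining}(3) gives $B = A' \cap P \prec_P z(N_n' \cap P) z$. Combining the double-commutant identity $(N_n' \cap P)' \cap P = N_n$ from Theorem \ref{propecfactors}(5) with the partial-isometry data supplied by Theorem \ref{corner}(1) applied to both intertwinings, one can exhibit inside a suitable cutdown $p N_n p$ a pair of commuting diffuse subalgebras — one coming from the commutant of the embedded corner of $B$ (roughly a corner of $A$), the other from the remaining tensor factor — whose join is a finite-index subalgebra of $pN_np$. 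The s-primeness of $N_n$ (Definition \ref{def:vprime}) then forces one of these subalgebras to be finite-dimensional, producing a rigid structural description of how $N_n$ sits inside the tensor decomposition.

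The main obstacle, and the most delicate part of the argument, will be step (iii): running the structural analysis of step (ii) uniformly at levels $n_0$ and $r$ of the chain, and combining the compatibility of the resulting partial isometries and cutdown projections with the index-transfer machinery (Proposition \ref{findex}, Lemma \ref{lemma.finite index.relative.commutant}, and Lemma \ref{findexunderexpectation}) to show that the only way to accommodate both embeddings is for $[N_r : N_{n_0}]$ to be finite, contradicting the infinite-index hypothesis. The technical subtlety is that Popa's intertwining only furnishes partial isometries rather than unitary conjugations, so the s-primeness arguments must be run in suitable cutdowns and transferred between levels of the inclusion $N_{n_0} \subset N_r$ via careful passage through conditional expectations, amplifications, and commutants while preserving the index data.
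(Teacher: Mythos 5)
Your step (i) has the wrong target and a flawed supporting argument, and this derails the rest of the proposal. The paper does not prove $N_n\prec_P A$; it proves $N_n'\cap P\prec_P A$. These are very different statements, and in fact $N_n\prec_P A$ should be \emph{false} for large $n$: the $N_n$'s are weakly dense in $P$, so eventually $N_n$ contains elements close to $B$, and a corner of $N_n$ cannot embed into a corner of the proper tensor factor $A$. (Concretely, if it happened that $N_n\supseteq A\overline\otimes B_0$ with $B_0\subset B$ diffuse, then $N_n\prec_P A$ would force a diffuse corner of $B_0$ to compress into $\mathbb C$, which is absurd.) The detailed argument you give for step (i) is also broken on its own terms: after extracting $(u_m)\subset\mathscr U(N_n)$ from $N_n\nprec_P A$, you assert $\xi_m:=u_m-E_A(u_m)$ is $N_n$-almost central, but $u_m\in N_n$ and $N_n$ is a factor, so $\|yu_m-u_my\|_2$ does not tend to $0$ for general $y\in N_n$. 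Moreover, the spectral-gap inequality $\|z-E_A(z)\|_2\leq C\sum_i\|[z,1\otimes y_i]\|_2$ that you set up at the start of step (i) is never actually invoked in your property (T) argument — a sign that the two halves of the step do not connect.

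The paper's step (i) uses the spectral gap in the opposite way and needs no property (T) for this part. Pick $p_1,\dots,p_k\in N_n$ with $\|b_i-p_i\|_2$ small. Any unitary $y\in N_n'\cap P$ commutes exactly with each $p_i$, so $\|[y,b_i]\|_2\leq 2\|b_i-p_i\|_2$ is uniformly small, and the spectral-gap inequality gives $\|y-E_A(y)\|_2\leq\varepsilon$ uniformly on $\mathscr U(N_n'\cap P)$; Theorem \ref{corner}(3) then yields $N_n'\cap P\prec_P A$ directly. Property (T) of $N_n$ then enters via Theorem \ref{propecfactors}(5), giving the bicommutant identity $(N_n'\cap P)'\cap P=N_n$ in the existentially closed $P$, so that passing to relative commutants with Lemma \ref{lemma.elemetary.facts.intertwinining}(3) gives $B=A'\cap P\prec_P N_n$ — the opposite direction from what you obtain in step (ii). Finally, your step (iii) is essentially left as an acknowledged gap. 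The paper's route is quite specific: from $B\prec_P N_n$, a delicate s-primeness argument in $N_n$ produces the finite-index conclusion $N_n\vee(N_n'\cap P)\subseteq P$; then one pushes this through the conditional expectation $E_{N_r}$ onto a higher level $N_r$ with $[N_r:N_n]=\infty$, invokes Lemma \ref{findexunderexpectation}, and applies s-primeness of $N_r$ a second time to force $N_n'\cap N_r$ finite-dimensional, contradicting the infinite index. You would need to discover all of this machinery; simply noting that one should ``combine the compatibility of partial isometries with index-transfer machinery'' is not a proof.
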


\begin{proof} Suppose that $P=\overline{\cup_n N_n}^{\rm wot}$ where $(N_n)_n$ is an increasing sequence of property (T), virtually prime II$_1$ factors satisfying that for every $n$ there is $r>n$ such that $N_n\subset N_r$
has infinite index.

Now assume by contradiction that $P= A\overline \otimes B$ is a decomposition into II$_1$ factors with $B$ full. By applying \cite[Proposition 6.2.11]{AGKE20} we get that $A$ is non-amenable.  

As $B$ is a full factor, by \cite{Co76} we can find $b_1,...,b_k\in \mathscr U(B)$ and $C>0$ such that for every $x\in P$ we have 
\begin{equation}\label{sgap1}
    \sum_{i=1}^k\|xb_i-b_i x\|_2\geq C \|x-E_A(x)\|_2.
\end{equation}
Fix $0<\varepsilon<1$. Since $b_i\in P=\overline{\cup_n N_n}^{\rm wot}$ one can find $n\in \mathbb N$ and $p_1,...,p_k \in N_n$ such that \begin{equation}\label{approx}\|b_i-p_i\|_2\leq \frac{\varepsilon C}{k}\text{ for all }i=\overline{1,k}.  \end{equation}
Fix $y\in (N_n'\cap P)_1$. Using inequalities \eqref{sgap1}, \eqref{approx} along with relations $yp_i=p_iy$ for all $i=\overline{1,k}$ we get \begin{equation*}\begin{split}
  \|y-E_A(y)\|_2\leq  \frac{1}{C} \sum_{i=1}^k\| yb_i-b_i y\|_2 \leq    \varepsilon. 
\end{split}\end{equation*} 

Using triangle inequality this further implies for all unitaries $y\in N_n'\cap P$ we have $\|E_A(y)\|_2\geq 1-\varepsilon>0$. Thus part (3) in Theorem \ref{corner} shows that $N_n'\cap P \prec_M A$. 

Since $N_n$ has property (T), it has $w$-spectral gap in the sense of \cite{Po09} in any extension (in fact this characterizes property (T), see \cite{Ta22}). As $P$ is existentially closed,  by Theorem \ref{theorem.model.theory}(5) we get that $(N_n'\cap P )'\cap P=N_n$. Hence, by passing to relative commutants using Lemma \ref{lemma.elemetary.facts.intertwinining}(3) that  \begin{equation}\label{sintertwining}B\prec_P N_n.\end{equation}

In the rest of the proof we show that \eqref{sintertwining}  will lead to a contradiction. 
\vskip 0.05in 

 For simplicity, denote $Q:=N_n$.  As $B\prec_P Q$, using \cite[Proposition 2.4]{CKP14} and its proof, one can find projections $b \in B$, $q \in Q$, a nonzero partial
isometry $v\in qPb$
, a von Neumann subalgebra $D \subseteq  q Q q$, and a $\ast$-isomorphism onto its image $\phi : bBb \rightarrow D $ satisfying the following relations:

\begin{enumerate}
    \item $D \vee (D' \cap q Q q) \subseteq  qQ q$ has finite index;
    \vskip 0.04in
    \item $\phi(x)v = vx$ for all  $x \in bBb$;
    \vskip 0.04in
    \item $vv^* \in D' \cap  qPq$ and $p:=v^*v = a\otimes b$ for some projection $a\in A$.
\end{enumerate}

Since $Q$ is s-prime, relation (1) implies that $D'\cap qQq$ is finite dimensional. Fix $0\neq z\in D'\cap qQq$, a minimal central projection. Thus one can find $n \in \mathbb N$ such that $(D'\cap qOq)z\cong M_n(\mathbb C)$. Since $Dz$ is a factor commuting with $(D'\cap qQq)z$ this further implies the index $[(D\vee D'\cap qQq)z : Dz]<\infty$.  Lemma \ref{findex} and relation (1) also imply the index $[zQz: (D\vee (D'\cap qQq))z]<\infty$. Using the transitivity property of finite index inclusions, these relations yield that $Dz \subset z Qz$ is a finite index inclusion of II$_1$ factors. Moreover, replacing $q$ by $qz$, $D$ by $Dz$, $v$ by $vz$ and $\phi(\cdot) $ by $\phi(\cdot)z$, relations (2) and (3) still hold and furthermore, instead of (1) we actually have

\begin{enumerate}
    \item [(1')]$D \subseteq  qQ q$ is a finite index inclusion of II$_1$ factors.
\end{enumerate}

Choosing $u \in \mathscr U( P
)$ such that $v = u p $, then relation (2) entails that  
\begin{equation}\label{equalcorners1} D vv^* = v B v^* = u( p B p)u^*.\end{equation}

Passing to relative commutants above and using (3), we also have  
\begin{equation}\label{equalcorners2} vv^*(D'\cap qPq)  vv^*  = u(pAp)u^*.\end{equation}
Altogether, relations \eqref{equalcorners1}-\eqref{equalcorners2} show that $vv^*(D\vee (D'\cap qPq))  vv^*  = u pP p u^*$. If $t$ denotes the central support of $vv^*$ in $D\vee (D'\cap qPq)$, this further implies that 
\begin{equation}\label{equalcorners3}
    (D\vee (D'\cap qPq))t  = t P t.
\end{equation}

Using relation (1'), Lemma \ref{lemma.finite index.relative.commutant} and Proposition \ref{findex}(1) we deduce that $(qQq'\cap qPq)t \subseteq (D'\cap qPq)t$ has finite index. Since $Dt$ is a factor and commutes with $(D' \cap qPq)t$, then it is in tensor position with respect to $(D'\cap qPq)t$ and $(qQq'\cap qPq)t$; thus, using Lemma \ref{findex}(4) it follows that $ (D \vee (q Q'q \cap qPq))t \subseteq (D\vee (D'\cap qPq))t =tPt$ has finite index as well. By Lemma \ref{Theorem.PP86} we deduce that  $P\prec_P D \vee (q Qq; \cap qPq)$ and since $D\subseteq qQq$ we further have  $P\prec_P qQq \vee (q Qq' \cap qPq)= q (Q \vee (Q '\cap P) )q$. Since $(Q'\cap P )'\cap P=Q$, we get that 
$Q\vee (Q'\cap P) $ has trivial relative commutant inside $P$.
Hence,  by  \cite[Proposition 2.3]{CD18} we further derive that 
\begin{equation}\label{almostproduct}N_n\vee (N_n'\cap P)\subseteq P\text{ is a finite index inclusion of II}_1 \text{ factors}. \end{equation}

Now, fix $r>n$ such that $N_n\subset N_r$ has infinite index. 
Let $E_{N_r}: P \rightarrow N_r$ be the canonical  conditional expectation and denote by $S= E_{N_r}(N'_n\cap P)''$ the von Neumann algebra generated by the image of $N_n'\cap P$ under the expectation  $E_{N_r}$ inside $N_r$. Using the $N_r$-bimodularity of $E_{N_r}$, we can see that $E_{N_r}(N_n'\cap P) \subset N_n'\cap N_r$. In particular, $S$ commutes with $N_n$.

Denote also $T= E_{N_r}(N_n \vee (N'_n\cap P))''$. Since for every $x \in N_n$ and $y \in N_n'\cap P$ we have that $E_{N_r}(xy)= x E_{N_r} (y)$ one can see that $T = N_n \vee S$.

Finally, notice that \eqref{almostproduct} and Lemma \ref{findexunderexpectation} (for $M=N_n\vee (N_n'\cap P)$ and $N=N_r$) imply that  $N_n \vee S =T\subseteq N_r$ has finite index. 
Since $N_r$ is s-prime it follows that $S$ is finite dimensional. Thus, 
 $N_n\subset N_r$ has finite index, which is a contradiction. \end{proof}



\begin{remark} The above proof applies verbatim to any existentially closed factor of the form $P = \overline{\cup_n N_n}^{\rm wot}$, where $N_n$ are virtually prime factors that do not necessarily have property (T) but satisfy the bicommutant property $(N_n'\cap P)'\cap P=N_n $,  for all $n\in \mathbb N$. It would be interesting to determine if there are such existentially closed factors $P$ when $N_n$ are free products.   
\end{remark}

Next, we record some immediate consequences of Theorem \ref{etprime}. 
First, we observe that while factors in class $\mathcal {ET}$ are inner asymptotically central (see \cite[Definition 6.2.8 and Proposition 6.3.2]{AGKE20}), they cannot be written as infinite tensor products of full factors (see also \cite[Question 6.4.1]{AGKE20}). 
\begin{corollary} For any $P\in \mathcal {ET}$, we have that $P\ncong \overline{\otimes}_{n\in \mathbb N} P_n$ for any infinite collection of $\{P_n\,:\,n\in \mathbb N\}$ of full II$_1$ factors.\end{corollary}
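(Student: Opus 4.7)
The plan is to deduce this statement directly from Theorem \ref{etprime} by a standard regrouping of tensor factors. Suppose for contradiction that $P\cong \overline{\otimes}_{n\in\mathbb N} P_n$ with each $P_n$ a full II$_1$ factor. The idea is to isolate one of the tensor factors and absorb the remaining infinitely many factors into a single II$_1$ factor complement. Concretely, I would fix $B:=P_1$ and set $A:=\overline{\otimes}_{n\geq 2}P_n$. Then by associativity of the infinite tracial tensor product one has $P\cong A\overline\otimes B$.

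To invoke Theorem \ref{etprime}, I need to check that $A$ and $B$ are II$_1$ factors and that $B$ is full. Fullness of $B$ is immediate since $B=P_1$ was assumed to be a full II$_1$ factor. That $A=\overline{\otimes}_{n\geq 2}P_n$ is a II$_1$ factor is classical: an infinite tracial tensor product of II$_1$ factors, with respect to the product trace, has trivial center and a faithful normal tracial state, hence is a II$_1$ factor. Therefore we have decomposed $P\in\mathcal{ET}$ as $A\overline\otimes B$ with $A,B$ II$_1$ factors and $B$ full, directly contradicting Theorem \ref{etprime}. There is no substantive obstacle beyond Theorem \ref{etprime} itself; the only point worth stating explicitly in the write-up is the regrouping and the factoriality of the infinite tensor product $\overline{\otimes}_{n\geq 2}P_n$.
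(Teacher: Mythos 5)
Your proposal is correct and is essentially the argument the paper intends: the corollary is stated without proof precisely because the regrouping $P\cong \left(\overline{\otimes}_{n\geq 2}P_n\right)\overline\otimes P_1$ with $P_1$ full makes it an immediate consequence of Theorem \ref{etprime}. Your observation that the infinite tracial tensor product $\overline{\otimes}_{n\geq 2}P_n$ is a II$_1$ factor is the only background fact needed, and you state it accurately.
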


The second application concerns the structure of the central sequence algebra of a certain class of existentially closed factors. A factor $P$ is called \emph{super McDuff} if its central sequence algebra $P'\cap P^\omega$ is a II$_1$ factor. In \cite[Question 6.3.1]{AGKE20} it was asked whether all existentially closed factors are super McDuff.
In \cite[Theorem 6.4]{CDI22} this question was answered positively for all infinitely generic factors. Using Theorem \ref{etprime}, we can moreover show that if $P$ is an infinitely generic factor which is also in class $\mathcal {ET}$, then all of its tensor factors are super McDuff. 

\begin{corollary} Let $P\in \mathcal {ET}$ be an infinitely generic factor. Then for any diffuse tensor decomposition $P= A\overline 
\otimes B$, both factors $A$ and $B $ are super McDuff. 

\end{corollary}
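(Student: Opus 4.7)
The strategy is to combine the tensor indecomposability from Theorem~\ref{etprime} with the super McDuff property of infinitely generic factors established in \cite[Theorem~6.4]{CDI22}, via the standard tensor decomposition of central sequence algebras.

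Applying Theorem~\ref{etprime} both to the decomposition $P=A\overline\otimes B$ and, by symmetry of the tensor product, to $P=B\overline\otimes A$, I first deduce that neither $A$ nor $B$ can be a full II$_1$ factor, so both $A'\cap A^\omega$ and $B'\cap B^\omega$ are nontrivial. Since $P$ is infinitely generic, \cite[Theorem~6.4]{CDI22} yields that $P$ is super McDuff, i.e.\ $P'\cap P^\omega$ is a II$_1$ factor. I would next set up the natural commuting inclusions of ultrapowers induced by the tensor decomposition: because $A$ and $B$ commute in $P$, the canonical copies of $A^\omega$ and $B^\omega$ embed as commuting subalgebras of $P^\omega$, and the trace factorization $\tau_{P^\omega}(ab)=\tau_{A^\omega}(a)\,\tau_{B^\omega}(b)$ for $a\in A^\omega$, $b\in B^\omega$ identifies their join with $A^\omega\overline\otimes B^\omega$. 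Passing to relative commutants, this yields commuting inclusions $A'\cap A^\omega,\,B'\cap B^\omega\subseteq P'\cap P^\omega$ whose join is the tensor product $(A'\cap A^\omega)\overline\otimes(B'\cap B^\omega)$.

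The main technical step is the tensor decomposition
\[
P'\cap P^\omega \;=\; (A'\cap A^\omega)\overline\otimes(B'\cap B^\omega),
\]
whose inclusion $\supseteq$ is immediate from the preceding setup. For the reverse inclusion, I would approximate an asymptotically central sequence $(x_n)\subset P=A\overline\otimes B$ by finite sums of elementary tensors from $A\otimes_{\mathrm{alg}} B$ and exploit asymptotic centrality with respect to $A$ and $B$ separately, using slice maps to isolate the centralizing components in each tensor factor. Once this equality is in place, the factoriality of $P'\cap P^\omega$ inherited from super McDuffness of $P$ forces both $A'\cap A^\omega$ and $B'\cap B^\omega$ to be factors; being nontrivial by the first step and necessarily diffuse by the standard dichotomy for central sequence algebras of II$_1$ factors, both are II$_1$ factors, which is precisely the super McDuff property for $A$ and $B$. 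The main obstacle is establishing the reverse inclusion in the central sequence tensor formula; although expected and essentially standard, it requires a careful local approximation ensuring that the $A$- and $B$-components of the approximating tensors separately inherit asymptotic centrality.
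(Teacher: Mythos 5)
Your first two steps track the paper exactly: applying Theorem~\ref{etprime} to both orderings of the decomposition shows neither $A$ nor $B$ is full (equivalently, both have property $\Gamma$), so by McDuff's dichotomy both $A'\cap A^\omega$ and $B'\cap B^\omega$ are diffuse; and super McDuffness of $P$ comes from \cite[Theorem~6.4]{CDI22}. Your third step, however, contains a genuine gap. You state that the tensor decomposition
\[
P'\cap P^\omega \;=\; (A'\cap A^\omega)\,\overline\otimes\,(B'\cap B^\omega)
\]
is ``expected and essentially standard'' modulo a slice-map approximation, but only the inclusion $\supseteq$ is standard. The reverse inclusion is false in general: an asymptotically central sequence in $A\overline\otimes B$ need not lie in the von Neumann subalgebra $A^\omega\overline\otimes B^\omega\subsetneq (A\overline\otimes B)^\omega$ at all, since its finite-rank elementary-tensor approximations have unbounded rank as $n\to\infty$ and there is no way in general to extract, from a single $\omega$-centralizing sequence in the tensor product, centralizing sequences living separately in each tensor leg. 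The approximation/slice-map argument you sketch only separates the components after the sequence is already known to lie in $A^\omega\vee B^\omega$; it does not produce that membership.

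The paper sidesteps precisely this obstruction by citing Marrakchi's result \cite[Theorem~F]{Ma17}, which gives directly the weaker but sufficient implication: if $P=A\overline\otimes B$ and $P'\cap P^\omega$ is a factor, then each of $A'\cap A^\omega$ and $B'\cap B^\omega$ is a factor. Combined with diffuseness of those central sequence algebras (from property $\Gamma$ via \cite{MD69b}), this yields super McDuffness of $A$ and $B$ without ever establishing the full tensor-product formula for $P'\cap P^\omega$. You should replace your ``main technical step'' by an appeal to this (nontrivial) theorem of Marrakchi rather than attempting to prove the tensor formula, which is the actual content you would otherwise have to supply and cannot be obtained by a routine approximation argument.
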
 

\begin{proof}Since $P$ is infinitely generic, it follows from \cite[Theorem 6.4]{CDI22} that $P$ is super McDuff.   Let $P= A \overline \otimes B$ be a diffuse tensor decomposition. Using Theorem \ref{etprime} it follows that $A$ and $B$ have property Gamma. Hence, their central sequence algebras $A'\cap A^\omega$ and $B'\cap B^\omega$ are diffuse \cite{MD69b}. In addition, as $P'\cap P^\omega$ is a factor,  \cite[Theorem F]{Ma17} further implies that $A'\cap A^\omega$ and $B'\cap B^\omega$ are also factors, which yields the desired conclusion. \end{proof}

We believe that in fact all existentially closed factors satisfy the statement of Theorem \ref{etprime} and thus conjecture the following:

\begin{conjecture}\label{conj:notensorbyfull} For any $P\in \mathcal {E}$, we have that $P\ncong A\overline \otimes B$, for all  II$_1$ factors $A$ and $B$ with $B$ full. 

\end{conjecture}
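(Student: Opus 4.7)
\textbf{Proof strategy for Conjecture \ref{conj:notensorbyfull}.} The plan is to follow the proof of Theorem \ref{theorem.class.et.prime} but to replace the explicit inductive limit structure enjoyed by members of $\mathcal{ET}$ with a model-theoretic construction of property (T) s-prime subfactors obtained on demand from the existential closedness of $P$. Suppose for contradiction that $P = A\,\overline\otimes\, B$ with $A,B$ diffuse II$_1$ factors and $B$ full. As in Theorem \ref{theorem.class.et.prime}, Connes' characterization of fullness yields $b_1,\dots,b_k\in \mathscr U(B)$ and $C>0$ such that
\[
\sum_{i=1}^k \|[x,b_i]\|_2 \;\ge\; C\,\|x-E_A(x)\|_2, \qquad x\in P.
\]

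The crux is to exhibit a property (T), s-prime subfactor $N\subset P$ that contains (or is arbitrarily $\|\cdot\|_2$-close to containing) $b_1,\dots,b_k$. By Theorem \ref{theorem.cdi}(2) combined with Theorem \ref{primewlp}, the finitely generated subalgebra $B_0:=W^*(b_1,\dots,b_k)\subset P$ embeds into a property (T) s-prime wreath-like product factor $\tilde N\in \mathcal{WR}(Q,\Gamma)$. The existential closedness of $P$ provides an embedding $\tilde N\hookrightarrow P^\omega$ restricting to the diagonal inclusion of $B_0\subset P\subset P^\omega$; one then wishes to promote this to a genuine embedding $N\hookrightarrow P$ with $b_1,\dots,b_k\in N$, exploiting property (T) of $\tilde N$ to convert approximate embeddings into exact ones up to unitary conjugation by elements of $P$. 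Once $N$ is in place, Theorem \ref{propecfactors}(5) delivers the bicommutant property $(N'\cap P)'\cap P=N$. The spectral gap inequality then forces every unitary in $N'\cap P$ to be $\varepsilon$-close to its image under $E_A$, so Theorem \ref{corner}(3) gives $N'\cap P\prec_P A$, and Lemma \ref{lemma.elemetary.facts.intertwinining}(3) upgrades this to $B\prec_P N$.

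From $B\prec_P N$ the finite-index analysis of Theorem \ref{theorem.class.et.prime} runs almost verbatim: extracting a partial isometry $v$ intertwining a corner of $B$ into a finite-index subfactor $D\subset qNq$, and then controlling the relative commutant $D'\cap qPq$, one concludes that $N\vee (N'\cap P)\subseteq P$ is a finite-index inclusion of II$_1$ factors. The contradiction is now produced by a second application of the subfactor construction: find a property (T), s-prime suprafactor $N\subset \hat N\subset P$ with $N\subset\hat N$ of infinite Jones index. Applying the expectation $E_{\hat N}$ to the finite-index relation, setting $S=E_{\hat N}(N'\cap P)''\subseteq N'\cap \hat N$, and invoking Lemma \ref{findexunderexpectation} yields $N\vee S\subseteq \hat N$ of finite index; s-primeness of $\hat N$ then forces $S$ to be finite dimensional, whence $[\hat N:N]<\infty$, contradicting the construction.

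The main obstacle is the twin ``embedding extension'' problem underlying Stages Two and Four: producing a property (T) s-prime subfactor of $P$ that contains a prescribed finite subset, and a further infinite-index property (T) s-prime suprafactor of it, both sitting inside $P$. Both statements are automatic in $\mathcal{ET}$ by the defining inductive presentation, and they hold for infinitely generic factors via the elementary equivalence of embeddings supplied by Proposition \ref{infgen}(2). For an arbitrary $P\in\mathcal E$ they require upgrading embeddings into $P^\omega$ to embeddings into $P$ itself, which is not a direct consequence of existential closedness and presumably demands a genuinely new rigidity argument that leverages both the property (T) of $N$ and the spectral gap inherited from $B$ inside $P$; carrying this out is where the bulk of the work lies.
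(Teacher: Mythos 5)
The statement you were asked to prove is labeled as a conjecture in the paper (Conjecture~\ref{conj:notensorbyfull}), and the authors state explicitly that they ``do not have an approach for this conjecture in its full generality.'' The paper proves the corresponding statement only for the restricted class $\mathcal{ET}$ (Theorem~\ref{etprime}), together with a few partial results for general $P\in\mathcal E$ under extra hypotheses on the tensor factors (Theorems~\ref{theorem.inneramenable} and~\ref{notensorwithisomparts}, and Theorem~\ref{thm:ecpreservundertensorbyfull}). There is therefore no proof of this statement in the paper against which your attempt could be checked.

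Your outline faithfully reproduces the architecture of the $\mathcal{ET}$ argument, and to your credit you pinpoint exactly the obstruction that blocks the extension to all of $\mathcal E$: existential closedness only hands you embeddings of property~(T) s-prime factors into the ultrapower $P^\omega$, not into $P$ itself, whereas Theorem~\ref{propecfactors}(5) and the subsequent finite-index machinery require the subfactors $N$ and $\hat N$ to sit honestly inside $P$. The one place you are over-optimistic is the suggestion that property~(T) of $\tilde N$ should let you ``convert approximate embeddings into exact ones up to unitary conjugation by elements of $P$.'' No such lifting phenomenon is available: property~(T) gives $w$-spectral gap and rigidifies near-homomorphisms quantitatively, but it does not allow one to descend an arbitrary embedding $\tilde N\hookrightarrow P^\omega$ to an embedding $\tilde N\hookrightarrow P$. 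If it did, every $P\in\mathcal E$ would admit the inductive presentation defining $\mathcal{ET}$, and Theorem~\ref{etprime} would already prove the conjecture. Finding conditions under which one can pass from $P^\omega$ to $P$ --- beyond the inductive-limit presentation defining $\mathcal{ET}$ or the elementary-embedding property of infinitely generic factors from Proposition~\ref{infgen} --- is precisely the open problem the authors flag, and your proposal identifies but does not close that gap.
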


At the time of writing, we do not have an approach for this conjecture in its full generality. However, we would like to mention that the conjecture holds true if one assumes in addition that $B$ is a group von Neumann algebra arising from a non-inner amenable group; for the definition, see Exercise 6.20 in Ioana’s article in this volume. 
In fact,  we have the following more general indecomposability result.

\begin{theorem}\label{theorem.inneramenable} If $Q\in \mathcal E$, then $Q\ncong A\rtimes_{\sigma, \alpha}
\Gamma$ for any trace-preserving cocycle action $\Gamma \curvearrowright^{\sigma,\alpha}(A,\tau)$ of a non-inner amenable group $\Gamma$.  \end{theorem}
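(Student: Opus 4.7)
The plan is to argue by contradiction. Assuming $Q \cong A \rtimes_{\sigma, \alpha} \Gamma$ with $\Gamma$ non-inner amenable, we combine a spectral-gap argument at the ultrapower level (forced by non-inner amenability of $\Gamma$) with the abundance of central sequences produced by existential closedness. Non-inner amenability will confine $Q' \cap Q^\omega$ inside $A^\omega$, while existential closedness will provide a property (T) witness that cannot reside there; the pattern parallels the one used for Theorem \ref{etprime}, with the role of fullness of $B$ played by non-inner amenability of $\Gamma$.

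\textbf{Step 1 (spectral gap $\Rightarrow Q' \cap Q^\omega \subseteq A^\omega$).} The non-inner amenability of $\Gamma$ is equivalent to the existence of $g_1, \dots, g_k \in \Gamma$ and $c > 0$ such that
\[
\sum_{i=1}^k \|\mathrm{Ad}(g_i)\xi - \xi\|_{\ell^2}^2 \geq c\|\xi\|_{\ell^2}^2 \quad \text{for every } \xi \in \ell^2(\Gamma \setminus \{e\}).
\]
Given $x = \sum_g a_g u_g \in Q$ with Fourier expansion, the cocycle-crossed-product relations $u_{g_i} a u_{g_i}^* = \sigma_{g_i}(a)$ and $u_g u_h = \alpha(g,h) u_{gh}$ yield, via Parseval, the von Neumann-algebraic spectral gap
\[
\sum_{i=1}^k \|[u_{g_i}, x]\|_2^2 \geq c \,\|x - E_A(x)\|_2^2 \qquad \forall x \in Q.
\]
This inequality persists in $Q^\omega$ with $E_{A^\omega}$ in place of $E_A$; for $x \in Q' \cap Q^\omega$ the left-hand side vanishes, forcing $x = E_{A^\omega}(x) \in A^\omega$. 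Hence $Q' \cap Q^\omega \subseteq A^\omega$.

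\textbf{Step 2 (existential closedness produces a property (T) witness).} By Theorem \ref{propecfactors}, for every separable tracial $P$ the extension $Q \subseteq Q \overline\otimes P$ embeds into $Q^\omega$ over $Q$; hence every separable tracial von Neumann algebra embeds trace-preservingly into $Q' \cap Q^\omega$. Choosing $P$ to be a separable property (T) II$_1$ factor (say $L(\SL_3(\mathbb Z))$) produces a property (T) subfactor $N \subseteq Q' \cap Q^\omega$; by Step 1, $N \subseteq A^\omega$. Moreover, $N$ commutes with every $u_g \in Q$, so inside $A^\omega$ it is pointwise fixed by the induced $\Gamma$-action, that is $N \subseteq (A^\omega)^\Gamma$.

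\textbf{Step 3 (contradiction; main obstacle).} The plan for finishing the contradiction is to apply Popa's principle (\cite{Po01b}) that a property (T) subalgebra of an ultrapower $M^\omega$ is, up to unitary conjugation in $M^\omega$, contained in $M$ itself, carried out $\Gamma$-equivariantly inside $(A^\omega)^\Gamma$; this conjugates $N$ into $A^\Gamma$. Varying $N$ over all separable property (T) II$_1$ factors — or, via $Q \overline\otimes Q \hookrightarrow Q^\omega$, taking $N$ to be a copy of $Q$ itself, which forces $A$ to inherit the existential theory of existentially closed factors — yields an embedding-universality constraint on $A^\Gamma$ (respectively on $A$) that is incompatible with $A$ being the coefficient algebra of the cocycle crossed product $Q = A \rtimes \Gamma$. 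I expect the main obstacle to be the equivariant absorption step: making the unitary conjugation $\Gamma$-equivariant and converting the resulting soft universality statement into a concrete contradiction with the crossed product structure. A potentially cleaner route that parallels the proof of Theorem \ref{etprime} would be to find a property (T) subfactor $N \subset Q$ itself (not $Q^\omega$) containing the generators $u_{g_1}, \dots, u_{g_k}$, through an existential-closedness argument at the level of tuples; the spectral gap of Step 1 would then give $N' \cap Q \subseteq A$, and Theorem \ref{theorem.model.theory}(5) would yield $N = (N' \cap Q)' \cap Q \supseteq A' \cap Q$, from which the contradiction follows upon inspecting $A' \cap Q$ in the cocycle crossed product.
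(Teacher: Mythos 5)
Your Step 1 is correct and matches the paper: non-inner amenability gives the $w$-spectral gap inequality $\sum_i\|[u_{g_i},x]\|_2 \geq C\|x-E_A(x)\|_2$, which forces $Q'\cap Q^\omega\subseteq A^\omega$. From there, however, your Steps 2--3 diverge into an approach (property (T) witnesses in $Q'\cap Q^\omega$, followed by some $\Gamma$-equivariant absorption into $A^\Gamma$) that you yourself flag as having a serious gap at the equivariant-absorption step, and you never convert it into a concrete contradiction. The alternative route you sketch at the end is also speculative: it is not clear how existential closedness produces a property (T) subfactor of $Q$ itself containing the prescribed elements $u_{g_1},\dots,u_{g_k}$, and even granting $N'\cap Q\subseteq A$ and $N\supseteq A'\cap Q$, you do not explain what about $A'\cap Q$ in the cocycle crossed product is contradictory.

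The paper's argument after your Step 1 is both different and much shorter. It does not invoke property (T) witnesses at all. Instead it uses the fact that an existentially closed factor $Q$ is \emph{inner asymptotically central}: there is a single unitary $v=(v_n)_n\in Q^\omega$ with $vQv^*\subseteq Q'\cap Q^\omega$ (\cite[Definition 6.2.8, Proposition 6.3.2]{AGKE20}). Combined with Step 1, this gives $v u_g v^*\in A^\omega$ for every $g\in\Gamma$, so for each $g$ there are unitaries $c_n(g)\in\mathscr U(A)$ with $\lim_{n\to\omega}\|v_n u_g v_n^* - c_n(g)\|_2 = 0$, i.e. $\lim_{n\to\omega}\|v_n - c_n(g)v_nu_{g^{-1}}\|_2=0$. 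Writing the Fourier expansion $v_n=\sum_h v_h^n u_h$ and estimating, the vectors $\xi_n\in\ell^2(\Gamma)$ given by $\xi_n(h)=\|v_h^n\|_2$ are unit vectors that are asymptotically invariant under the right regular representation of $\Gamma$, contradicting non-amenability of $\Gamma$ (and non-inner amenable groups are non-amenable). So the missing idea in your proposal is inner asymptotic centrality: rather than trying to land property (T) rigidity inside $(A^\omega)^\Gamma$, one conjugates $Q$ itself into the central sequence algebra and reads off an amenability obstruction from the Fourier coefficients of the conjugating unitary.
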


\begin{proof} Assume by contradiction that $Q= A\rtimes_{\sigma, \alpha}
\Gamma$ for some cocycle action $\Gamma \curvearrowright (A,\tau)$. As $\Gamma$ is non-inner amenable, then using a generalization of Exercise 6.20 in Ioana's article in this volume, one can find $g_1,\ldots,g_k\in \Gamma$ and $C>0$ such that for every $x\in Q$ we have 
\begin{equation}\label{sgap}
    \sum_{i=1}^k\|xu_{g_i}-u_{g_i} x\|_2\geq C \|x-E_A(x)\|_2.
\end{equation}
In particular, this implies that $Q'\cap Q^\omega \subseteq A^\omega$. On the other hand, since $Q$ is existentially closed \ then there exists a unitary $(v_n)_n=v\in Q^\omega$ such that $v Qv^*\subseteq Q'\cap Q^\omega$ (see \cite[Definition 6.2.8 and Proposition 6.3.2]{AGKE20}). Altogether, these relations imply that for every $g\in \Gamma$ there exists a sequence $(c_n(g))_n\in \mathcal U(A)$ such  that  $\lim_{n\rightarrow \omega}\|v_n u_g v_n^*- c_n (g)\|_2=0$. This further shows that   \begin{equation}\label{approxinner1}
    \lim_{n\rightarrow \omega}\|v_n - c_n (g) v_n u_{g^{-1}}\|_2=0.  
\end{equation}  

To this end let $v_n =\sum_g v^n_g u_g$ with $v^n_g\in A$ be its Fourier expansion. Using this relation together with the triangle inequality and the fact that $c_n(g)$ and $\alpha(h,g)$ are unitaries, we can see  that
 \begin{equation}\label{approxinner2}\begin{split}
    \|v_n - c_n (g) v_n u_{g^{-1}}\|^2_2&=\sum _h \| v^n _{hg^{-1}}  - c_n (g) v^n_h \alpha(h,g^{-1})\|^2_2\\
    &\geq \sum _h \left (\| v^n _{hg^{-1}}\|_2  - \|c_n (g) v^n_h \alpha(h,g^{-1})\|_2\right )^2\\
    &= \sum _h \left (\| v^n _{hg^{-1}}\|_2  - \|v^n_h \|_2\right )^2.
    \end{split}\end{equation}
    
Letting $\xi_n(g):=\|v_g^n\|_2$ for all $g\in \Gamma$ one can see that $\xi_n\in \ell^2 \Gamma$ with $\|\xi_n\|_2=1$. Moreover, relations \eqref{approxinner1}-\eqref{approxinner2}  show that $\lim_{n\rightarrow \omega}\|\rho_g(\xi_n)-\xi_n\|_2 =0$, where $\rho$ is the right regular representation of $\Gamma$. This implies that $\Gamma$ is amenable, a contradiction. \end{proof}

We continue with a few basic observations that rule out other particular  types of tensor product decompositions for existentially closed factors. It is possible that some of the results could also shed some light towards a possible approach to Conjecture \ref{conj:notensorbyfull}.  

Using embedding results into von Neumann algebraic HHN extensions, as in the group situation, we show that any existentially closed factor $M$ does not decompose as $M= A \overline \otimes B$ for any factors $A$ and $B$ that contain isomorphic copies of a given non-amenable factor.

\begin{theorem}\label{notensorwithisomparts} If $P\in\mathcal E$, then $P\ncong A \overline\otimes B$ for any factors $A$, $B$ that both contain isomorphic copies of a given non-amenable factor.   

\end{theorem}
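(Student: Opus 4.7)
I want to argue by contradiction. Suppose $P \cong A \overline\otimes B$, where both $A$ and $B$ contain isomorphic copies $N_A \subset A$ and $N_B \subset B$ of the non-amenable factor $N$, and fix a $\ast$-isomorphism $\theta : N_A \to N_B$. Since $A$ and $B$ commute inside $P$, so do $N_A$ and $N_B$. The plan is to exploit this pair of commuting $\theta$-isomorphic copies via an HNN extension construction, in analogy with the group-theoretic setting alluded to in the statement.

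Concretely, I would form the tracial von Neumann algebraic HNN extension $\tilde P := \mathrm{HNN}(P, N_A, \theta)$, using the Ueda--Fima--Vaes type construction. This should produce a separable tracial II$_1$ factor containing $P$ together with a trace-preserving conditional expectation $E_P: \tilde P \to P$ and a ``stable'' unitary $t \in \tilde P$ satisfying $t x t^\ast = \theta(x)$ for every $x \in N_A$, plus the orthogonality $E_P(t) = 0$. Since $P \in \mathcal{E}$ is existentially closed, Theorem \ref{theorem.model.theory} supplies an embedding $\tilde P \hookrightarrow P^\omega$ extending the diagonal inclusion. Letting $v \in \mathscr U(P^\omega)$ denote the image of $t$, we then have $v x v^\ast = \theta(x)$ for every $x \in N_A$ and $E_P(v) = 0$ for the canonical expectation $E_P : P^\omega \to P$. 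Any representative $v = (v_n)_n$ with $v_n \in \mathscr U(P)$ therefore yields a weakly null sequence of unitaries in $P$ satisfying $\|v_n x v_n^\ast - \theta(x)\|_2 \to 0$ for every $x \in N_A$ along $\omega$.

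To finish, I would analyze the $N_A$-$N_A$ bimodule $\mathcal H$ given by $L^2(P)$ as a Hilbert space but with twisted actions $x \cdot \xi \cdot y := \theta(x) \xi y$, for $x, y \in N_A$ and $\xi \in L^2(P)$. Using $v_n^\ast v_n = 1$ together with $v_n x v_n^\ast \to \theta(x)$ in $\|\cdot\|_2$, one checks the identities $\langle x \cdot v_n \cdot y, v_n\rangle_{\mathcal H} = \tau(v_n^\ast \theta(x) v_n y) \to \tau(xy)$ and $\|x \cdot v_n - v_n \cdot x\|_{\mathcal H}^2 = \|\theta(x) v_n - v_n x\|_2^2 \to 0$ for all $x, y \in N_A$. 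Hence the unit vectors $(v_n)$ are asymptotically $N_A$-central and asymptotically tracial in $\mathcal H$, so the trivial $N_A$-bimodule is weakly contained in $\mathcal H$. On the other hand, relabelling the left action via $\theta$ identifies $\mathcal H$ with ${}_{N_B} L^2(P) {}_{N_A}$, and the tensor decomposition $P = A \overline\otimes B$ factors this as ${}_{N_B} L^2(B) \otimes L^2(A){}_{N_A}$, an external tensor product of one-sided Hilbert modules. Since one-sided Hilbert modules over a II$_1$ factor are contained in amplifications of the standard form, this bimodule is weakly contained in the coarse $N_B$-$N_A$ bimodule, and after undoing the $\theta$-relabelling, in the coarse $N_A$-$N_A$ bimodule. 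Combining the two containments forces the trivial $N_A$-bimodule to be weakly contained in the coarse $N_A$-bimodule, i.e., $N_A$ is amenable, contradicting the non-amenability of $N$.

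The main obstacle I anticipate is verifying that the von Neumann algebraic HNN extension $\tilde P$ really enjoys all the features used above---existence with a faithful trace, a conditional expectation $E_P$ onto $P$, and the orthogonality $E_P(t) = 0$---so that the application of existential closedness to produce the weakly null implementing sequence $(v_n)$ is legitimate. Once that setup is secured, the amenability deduction in the last paragraph is a direct weak-containment calculation.
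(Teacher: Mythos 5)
Your argument follows essentially the same route as the paper: form the tracial HNN extension along the isomorphism between the commuting non-amenable subfactors, use existential closedness to realize the stable unitary inside $P^{\omega}$ as an implementing sequence, and then read off amenability of the non-amenable subfactor from a twisted $L^{2}(P)$-bimodule that is a sub-bimodule of a multiple of the coarse bimodule because of the tensor decomposition $L^{2}(P)\cong L^{2}(A)\otimes L^{2}(B)$. The only cosmetic differences are the side on which you place the isomorphism in the bimodule twist and the inversion $t x t^{\ast}=\theta(x)$ versus $w\phi(x)w^{\ast}=x$; the technical prerequisites you flag (existence of the tracial HNN extension with conditional expectation and $E_{P}(t)=0$) are exactly what the cited Ueda and Fima--Vaes constructions provide.
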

\begin{proof} Assume by contradiction that $P= A \overline\otimes B$ such that there exist isomorphic non-amenable subfactors $S\subseteq A$, $T\subseteq B$. Fix a $*$-isomorphism $\phi: S\rightarrow T$.
Following an idea from the proof of \cite[Theorem 4]{Go21},
we consider the HNN extension $Q={\rm HNN}(P, \phi)$ associated to $\phi$ \cite{Ue05} and obtain that $P\subset Q$ is an inclusion of tracial von Neumann algebras together with a unitary $w\in Q$ such that $w\phi (x)w^*=x$, for any $x\in S$ (see also \cite[Section 3]{FV10}). 
Since $P$ is existentially closed and $w\in Q\supset P$, it follows that we can represent $w=(u_n)_n\in P^\omega$, and hence,  \begin{equation}\label{eq:almostinvvect}\lim_{n\to\omega}\|u_n\phi(x)-xu_n \|_2 = 0, \text{ for any }x\in S.\end{equation} Consider the Hilbert space  $L^2(P)$ and endow it with the $S$-bimodular structure given by $x\cdot \xi \cdot y= (x\otimes 1)\xi (1\otimes \phi(y))$ for every $x,y\in S$ and $\xi\in L^2(P)$. 
Then $L^2(P)$ is isomorphic to a sub-bimodule of a multiple of the coarse $S$-bimodule, $\left(L^2(S)\overline \otimes L^2(S)\right)^{\oplus \infty}$.
This follows from $P=A\bar\otimes B$ and the fact that any left (respectively, right) $S$-module is contained in $L^2(S)^{\oplus\infty}$ as a left (respectively, right) $S$-module (see, for instance, \cite[Proposition 8.2.3]{AP22}).
Therefore, using Section 2.4  relation  \eqref{eq:almostinvvect} implies that $S$ is amenable, which is a contradiction.   \end{proof}


A well-known conjecture in the theory of von Neumann algebras predicts
that the following version of von Neumann's problem in group theory holds true: 

\begin{conjecture}\label{conj:vNprob}Every nonamenable II$_1$ factor contains a copy of $L(\mathbb F_2)$.\end{conjecture}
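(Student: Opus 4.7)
This statement is a notorious open problem, often viewed as the von Neumann algebraic counterpart of von Neumann's problem for groups (which is false in the group setting by Olshanskii's Tarski monsters). Since no proof is known, any plan must be read as a research program rather than a routine argument. The natural starting point is to combine Popa's deformation/rigidity machinery with measurable analogues of $\mathbb{F}_2$-embeddings inspired by the Gaboriau--Lyons theorem, which resolves the measure-theoretic version of the problem.

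My first attempt would proceed as follows. Let $M$ be a separable nonamenable II$_1$ factor. By Popa's theorem, $M$ contains a diffuse abelian subalgebra $A$; upgrading this, one would aim to produce a diffuse amenable subalgebra $B \subset M$ whose normalizer generates a subfactor $N$ which is nonamenable relative to $B$ in a strong enough sense. If one could arrange that $B \subset N$ behaves like a generalized Cartan-type inclusion arising from a free ergodic p.m.p.\ action of some nonamenable countable group $\Gamma \curvearrowright (X,\mu)$, then Gaboriau--Lyons would give a free ergodic subequivalence relation induced by an action of $\mathbb{F}_2$, and the corresponding II$_1$ subfactor would contain $L(\mathbb{F}_2)$. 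In parallel, one would exploit spectral gap: since $M$ is nonamenable, the coarse $M$-bimodule is not weakly contained in $L^2(M)$, so one can find a finite set $S \subset M$ and $k > 0$ with $\|\xi\|_2 \leq k \sum_{y \in S} \|y\xi - \xi y\|_2$ for all $\xi \in L^2(M)\bar\otimes L^2(M)$. Combined with a random-matrix or free-cumulant perturbation argument in $M^\omega$, one would try to produce two unitaries $u, v \in M$ whose mixed moments match those of free Haar unitaries closely enough that Dykema-type freeness lifting techniques yield an honest copy of $L(\mathbb{F}_2)$ inside $M$.

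The main obstacle, and the reason this problem has remained open, is precisely that nonamenability is an asymptotic spectral condition on bimodules, whereas embedding $L(\mathbb{F}_2)$ requires producing two \emph{exactly} free elements in $M$ itself (not in $M^\omega$). The spectral gap inequality furnishes approximate freeness, but upgrading from approximate to exact freeness without assuming additional structure (such as a Cartan subalgebra, a malleable deformation, or a biexactness-type hypothesis on a containing group) is the crux. Moreover, there is no known obstruction ruling out an ``exotic'' nonamenable factor that is, for example, inductive-limit generic in the sense of Section \ref{Section.ec.factors} and genuinely avoids every copy of $L(\mathbb{F}_2)$. Accordingly, I would regard a complete proof as requiring a genuinely new idea: either a rigidity-theoretic invariant that detects $L(\mathbb{F}_2)$-subfactors directly from spectral gap data, or a bimodule-theoretic mechanism for passing from $M^\omega$ to $M$. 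A more tractable intermediate goal would be to establish the conjecture under the additional assumption that $M$ has a Cartan subalgebra or is full, where the Gaboriau--Lyons route and the spectral gap route, respectively, are much closer to being executable.
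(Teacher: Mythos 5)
You are right that this statement is not a theorem: in the paper it is labeled Conjecture~\ref{conj:vNprob} and explicitly described as ``wide open at this time and very difficult to establish in full generality,'' so there is no proof in the paper to compare against, and your recognition of this is the correct reading. Your sketch of possible attack routes (Gaboriau--Lyons in the Cartan case, spectral gap plus approximate freeness, the $M^\omega$-to-$M$ obstruction) is a reasonable and accurate summary of the state of the art, and your identification of the approximate-versus-exact freeness gap as the central difficulty is well taken; none of this is meant to be, and should not be read as, a proof.
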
 A positive answer to this conjecture combined with Theorem \ref{notensorwithisomparts} would rule out the existence of tensor product decomposition into non-amenable factors for all existentially closed factors. Unfortunately,  this conjecture is wide open at this time and very difficult to establish in full generality.  However, since existentially closed  factors are highly rich objects, one expects that they all verify Conjecture \ref{conj:vNprob}.  Such a result would lead to new advances regarding Conjecture \ref{conj:notensorbyfull}. Indeed, we first notice the following elementary result.
\begin{theorem}\label{thm:ecpreservundertensorbyfull}
If $Q\in  \mathcal E$ and $Q=A \overline \otimes B$ with $B$ a full factor, then $A\in \mathcal E$. 
\end{theorem}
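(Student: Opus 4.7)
The plan is to reduce the existential closedness of $A$ to that of $Q$ by tensoring every extension of $A$ with $B$ and then stripping $B$ off inside the ultrapower via the fullness of $B$. More precisely, I would fix any separable tracial extension $A\subseteq N$ and form $\widetilde N:=N\overline\otimes B$, so that $Q=A\overline\otimes B\subseteq \widetilde N$ is a separable extension of $Q$. Since $Q\in\mathcal E$, there exists an embedding $j:\widetilde N\hookrightarrow Q^\omega$ whose restriction to $Q$ is the diagonal embedding. The target is then to show that the restriction of $j$ to $N\otimes 1$ actually takes values in the natural copy of $A^\omega$ sitting inside $Q^\omega$ (identified with $A^\omega\otimes 1$), which would provide the required embedding of $N$ into $A^\omega$ extending the diagonal embedding of $A$, and hence witness that $A\in\mathcal E$.

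The reason $j(N\otimes 1)$ should land in $A^\omega$ is a commutation argument: since $N\otimes 1$ commutes with $1\otimes B$ inside $\widetilde N$, the image $j(N\otimes 1)$ commutes with $j(1\otimes B)$, and $j|_Q$ being the diagonal embedding means that $j(1\otimes B)$ coincides with the diagonal copy of $1\otimes B$ in $Q^\omega$. Thus $j(N\otimes 1)\subseteq (1\otimes B)'\cap Q^\omega$, and I would reduce the entire statement to the algebraic identification
\[
(1\otimes B)'\cap (A\overline\otimes B)^\omega \;=\; A^\omega\otimes 1.
\]

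This last equality is the one nontrivial ingredient, and it is precisely the spectral gap computation already performed in the proof of Theorem \ref{etprime}: by \cite{Co76}, fullness of $B$ produces unitaries $b_1,\dots,b_k\in\mathscr U(B)$ and $C>0$ with
\[
\sum_{i=1}^k \|xb_i - b_i x\|_2 \;\geq\; C\,\|x - E_A(x)\|_2 \qquad\text{for every } x\in Q.
\]
Applied pointwise to a representing sequence $(x_n)_n\in Q^\omega$ of an element commuting with the diagonal copy of $1\otimes B$, the left-hand side tends to $0$ along $\omega$, forcing $\|x_n-E_A(x_n)\|_2\to 0$ and hence $(x_n)_n=(E_A(x_n))_n$ in $Q^\omega$, i.e.\ $(x_n)_n\in A^\omega\otimes 1$. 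The reverse inclusion is obvious.

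Combining these steps, $j$ restricts to an embedding $N\to A^\omega$ that extends the diagonal map $A\hookrightarrow A^\omega$, and since the extension $A\subseteq N$ was arbitrary, $A\in\mathcal E$. The setup (forming $\widetilde N$, invoking existential closedness of $Q$, restricting $j$) is purely formal; the only genuinely substantive step is the spectral gap calculation $(1\otimes B)'\cap Q^\omega=A^\omega$, but it is inherited verbatim from the proof of Theorem \ref{etprime} and does not represent a real obstacle here.
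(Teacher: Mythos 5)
Your proposal is correct and follows essentially the same route as the paper's proof: given an extension $A\subseteq N$ (the paper calls it $C$), tensor with $B$ to get an extension of $Q$, embed it into $Q^\omega$ by existential closedness, and observe that the image of $N\otimes 1$ lands in $(1\otimes B)'\cap Q^\omega=A^\omega$ by the fullness/spectral-gap identity. The only difference is cosmetic: you spell out the commutation step and the pointwise spectral gap estimate, whereas the paper simply invokes w-spectral gap of $B\subset Q$.
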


\begin{proof} First observe that since $B$ is full and $Q=A \overline \otimes B$ then the spectral gap condition \eqref{sgap1} implies that the inclusion $B\subset Q$ has w-spectral gap, i.e.\ \begin{equation}\label{sg} B'\cap Q^\omega = (B'\cap Q)^\omega.\end{equation} 
Now let $A \subseteq C$ be any extension. Thus $Q=A \overline \otimes B\subseteq C\overline \otimes B$. Since $Q=A \overline \otimes B$ is existentially closed we have that $Q\subseteq C\overline \otimes B \subseteq Q^\omega $. However we have that $C\subseteq B'\cap Q^\omega$ and using \eqref{sg} we get $A\subseteq C\subseteq B'\cap Q^\omega=A^{\omega}$.  
\end{proof}

The recent refutation of the Connes Embedding Conjecture from \cite{JNVWY20} implies in particular that the hyperfinite II$_1$ factor is not existentially closed. Thus  Conjecture \ref{conj:vNprob} could potentially hold true for all existentially closed factors. (We point out the corresponding statement for groups holds true as every group with solvable word problem embeds into every existentially closed group.) If this is the case,  then combining Theorems \ref{notensorwithisomparts} and \ref{thm:ecpreservundertensorbyfull} we would obtain that for any existentially closed factor $P$ we have that $P\ncong A\overline \otimes B$ where $B$ is any full factor containing a copy of $L(\mathbb F_2)$.

\vskip 0.05in
We end this section with one last conjecture on tensor decompositions of existentially closed factors. To this end, we first recall some terminology and provide some context. 

If $P$ is a McDuff factor then $P\cong P \overline \otimes R$, where $R$ is the hyperfinite factor.
We say that $P$ admits only the canonical McDuff decomposition if the following holds:  \emph{for any tensor decomposition $P=Q \overline \otimes R$, $Q$ is isomorphic to $P$.}  

Currently, only a few classes of McDuff factors admitting only the canonical McDuff decomposition are known. These include: \begin{enumerate}
    \item All infinite tensor products $\overline{\otimes}_{n\in\mathbb N} P_n$ of full factors $P_n$ \cite{Po09}; see also \cite[Corollary G]{Ma17} for an alternative  shorter proof.
    \item All McDuff's group factors $L(T_0(\Gamma))$ for any nontrivial icc group $\Gamma$, where $T_0(\Gamma)$ are McDuff's groups introduced in \cite{MD69a}; this is the main result in \cite{CS22}.   
\end{enumerate}  
We believe that existentially closed factors also have this property.

\begin{conjecture}
Any factor $P\in \mathcal E$ admits only the canonical McDuff tensor decomposition. 
\end{conjecture}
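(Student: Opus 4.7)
The plan is to reduce the conjecture to the claim that any tensor factor of an existentially closed factor is itself existentially closed. Precisely, suppose $P \in \mathcal E$ and $P = Q \overline\otimes R$ with $R$ the hyperfinite II$_1$ factor. If one can show that $Q \in \mathcal E$, then by Theorem \ref{propecfactors}(1) $Q$ is automatically McDuff, and hence $Q \cong Q \overline\otimes R = P$, which is exactly the desired canonicity statement. Thus the entire conjecture reduces to a hyperfinite analogue of Theorem \ref{thm:ecpreservundertensorbyfull}, in which the fullness hypothesis on the second tensor factor is relaxed.

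To attack this reduction, one would imitate the proof of Theorem \ref{thm:ecpreservundertensorbyfull}. Given any separable extension $Q \subset N$, consider the induced extension $P = Q \overline\otimes R \subset N \overline\otimes R$. Existential closedness of $P$ produces a trace-preserving embedding $\pi: N \overline\otimes R \hookrightarrow P^\omega$ which restricts to the diagonal inclusion $P \subset P^\omega$. Restricting $\pi$ to $N \otimes 1$ then gives an embedding of $N$ into $P^\omega$ that extends the canonical copy of $Q$ and whose image commutes with the diagonal copy of $1 \otimes R$; equivalently, $\pi(N) \subset (1 \otimes R)' \cap P^\omega$. Establishing $Q \in \mathcal E$ amounts to deforming $\pi$ so that $\pi(N)$ lands inside the natural diagonal copy of $Q^\omega$ in $P^\omega$.

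The chief obstacle is that, unlike the full case handled by Theorem \ref{thm:ecpreservundertensorbyfull}, the amenability of $R$ means the Connes-type spectral gap identity $(1 \otimes R)' \cap P^\omega = Q^\omega$ fails. Indeed, this relative commutant contains the non-trivial subalgebra $Q^\omega \overline\otimes (R' \cap R^\omega)$, and the McDuff central sequence algebra $R' \cap R^\omega$ creates a large obstruction. A natural strategy to overcome this is a two-step refinement: first, use the inner asymptotic centrality of $P$ guaranteed by the ec property (as exploited in the proof of Theorem \ref{theorem.inneramenable}), combined with conjugation by suitable unitaries of $P^\omega$, to align the image $\pi(1 \otimes R)$ with a finer diagonal subfactor; second, exploit the hyperfiniteness of $R$ by iterating over an approximating tower $M_{2^n}(\mathbb C) \subset R$ and re-invoking the ec property at each stage to force $\pi(N)$ to commute with progressively larger matrix subalgebras, eventually trapping it inside the canonical $Q^\omega$. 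A complementary line of attack, should the direct approach fail, is to first establish elementary equivalence between $P$ and $Q$ via Proposition \ref{infgen} under the assumption that $P$ is infinitely generic, and then attempt to upgrade this to isomorphism using the Continuum Hypothesis assumed throughout the paper together with the saturation of ultrapowers.
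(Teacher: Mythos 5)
This statement is presented in the paper as a \emph{conjecture}, with no proof offered; it is an open problem. Your submission should therefore be judged on whether it actually closes that gap, and it does not.

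Your reduction is correct as far as it goes: if one could show that whenever $P\in\mathcal{E}$ and $P=Q\,\overline\otimes\,R$ with $R$ hyperfinite then $Q\in\mathcal{E}$, then Theorem \ref{propecfactors}(1) would give $Q$ McDuff and hence $Q\cong Q\,\overline\otimes\,R=P$, which is the asserted canonicity. You also correctly isolate why the proof of Theorem \ref{thm:ecpreservundertensorbyfull} does not transfer: that argument hinges on the w-spectral gap identity $(1\otimes B)'\cap P^\omega=(B'\cap P)^\omega=Q^\omega$, which uses fullness of $B$ in an essential way, and since $R$ is amenable this fails badly --- $(1\otimes R)'\cap P^\omega$ contains $Q^\omega\,\overline\otimes\,(R'\cap R^\omega)$, a much larger algebra. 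Diagnosing the obstruction is the right instinct.

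However, everything past that diagnosis is a sketch, not a proof. The proposed ``two-step refinement'' --- conjugating by asymptotically central unitaries and then iterating over a tower $M_{2^n}(\mathbb{C})\nearrow R$ to ``trap'' the image of $N$ in $Q^\omega$ --- is never made precise, and it is not clear it can be: commuting with finite-dimensional subalgebras of $R$ in the ultrapower does not in the limit force commutation with $R$ itself (this is precisely what McDuffness of $R^\omega$ obstructs), and there is no control keeping the conjugating unitaries from moving the fixed copy of $Q$. Your ``complementary line of attack'' has a separate flaw: Proposition \ref{infgen} only applies if both $P$ and $Q$ are infinitely generic, which is not given; moreover, even elementary equivalence together with CH yields isomorphism of ultrapowers, not isomorphism of the separable factors themselves, so it cannot deliver $Q\cong P$. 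The conjecture remains open, and your proposal, while containing a correct reduction and a useful identification of the analytic difficulty, does not resolve it.
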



\section{Open Problems and Final Comments on the Structure of Existentially Closed Factors}\label{section.open}

In this section we propose several additional open problems regarding the structure of existentially closed factors. 

Much of the technology developed within the framework of Popa's deformation/rigidity theory is tailored for analyzing von Neumann algebras that are built from countable groups and their actions on von Neumann algebras. In order to understand if these methods can be applied to the study of existentially closed factors, one first needs to understand if existentially closed factors arise from countable groups or their actions. The following fundamental question is wide open:

\begin{op}\label{grec} Are there any group II$_1$ factors in the class $\mathcal E$? 

\end{op}

A possible approach for this problem, using model theoretic forcing, has been suggested in Goldbring's article in this volume.

If the answer to Problem \ref{grec} is positive, then we can further ask if it is possible to describe the groups $\Gamma$ satisfying $L(\Gamma)\in \mathcal E$? If this is too much to ask for, could one at least identify some properties such groups enjoy? It would be natural to investigate if these groups share any properties with existentially closed groups. Obviously, since existentially closed factors are McDuff, such groups $\Gamma$ might not be simple but is it at least true that they are infinitely generated? Do they always contain free groups?

In the opposite direction, constructing existentially closed factors that do not arise from groups also seems challenging. 

\begin{op}
Are there any concrete examples of existentially closed factors that are not group factors? 
\end{op}
We believe that most existentially closed factors are not group factors. 
The idea would be to exploit the embeddings group factors (e.g., the comultiplication $\Delta:L(\Gamma)\rightarrow L(\Gamma)\overline{\otimes}L(\Gamma)$) possess. However, thus far, we were not able to construct existentially closed factors which are not group factors. We notice that, even by abstract means, so far we do not know the existence of even a single existentially closed group factor. 
 
We end with another open problem regarding indecomposability properties of  existentially closed factors. Theorem \ref{theorem.inneramenable}  implies that existentially closed  factors do not appear as group measure space von Neumann algebras $L^\infty(X)\rtimes \Gamma$ associated with probability measure preserving actions $\Gamma \curvearrowright (X,\mu)$ on diffuse probability spaces, where $\Gamma$ is a non-inner amenable group. We believe that much more should be true and conjecture the following:

\begin{conjecture} There is no factor $P\in \mathcal E$ which has a Cartan subalgebra. 

\end{conjecture}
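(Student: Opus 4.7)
The plan is to extend Theorem~\ref{theorem.inneramenable} from twisted crossed products by non-inner amenable groups to arbitrary Cartan inclusions, using the Feldman-Moore structure theorem together with the defining property of existentially closed factors used in that proof, namely the existence of a unitary $v\in P^\omega$ with $vPv^*\subseteq P'\cap P^\omega$.

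Suppose toward a contradiction that $P\in\mathcal E$ admits a Cartan subalgebra $A\subseteq P$. By the Feldman-Moore theorem, the pair $A\subseteq P$ is isomorphic to the canonical Cartan inclusion $L^\infty(X)\subseteq L(\mathcal R,w)$ for some countable ergodic probability measure preserving equivalence relation $\mathcal R$ on a standard probability space $(X,\mu)$ and some $2$-cocycle $w$. The full group $[\mathcal R]$ realizes, via canonical partial isometries $\{u_\phi\}_\phi$, the normalizer $\mathcal N_P(A)$, and these together with $A$ densely span $P$ in $\|\cdot\|_2$.

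I would next attempt to establish a relative spectral gap estimate $P'\cap P^\omega\subseteq A^\omega$, in direct analogy with the inequality \eqref{sgap} in the proof of Theorem~\ref{theorem.inneramenable}. This should be available precisely when $\mathcal R$ is not inner amenable in the sense of Jones-Schmidt, via a spectral gap inequality involving finitely many normalizers $u_{\phi_1},\dots,u_{\phi_k}$. Granted such an estimate, combining it with a McDuff-type unitary $v=(v_n)\in P^\omega$ with $vPv^*\subseteq P'\cap P^\omega\subseteq A^\omega$, for every normalizing partial isometry $u_\phi$ one would obtain $c_n(\phi)\in A$ with $\|v_n u_\phi v_n^*-c_n(\phi)\|_2\to 0$. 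Expanding each $v_n$ in its Fourier series along $\mathcal R$ as in the proof of Theorem~\ref{theorem.inneramenable} and running the $\ell^2(\mathcal R)$-estimate \eqref{approxinner2} with $\Gamma$ replaced by the countable groupoid of partial isomorphisms in $\mathcal R$, one would produce an almost invariant vector for the natural conjugation representation of $[\mathcal R]$ on $\ell^2(\mathcal R)$, forcing $\mathcal R$ to be inner amenable -- a contradiction.

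The main obstacle is that this strategy only covers the case when $\mathcal R$ fails to be inner amenable, and a priori nothing is known that prevents $P\in\mathcal E$ from carrying an inner amenable Cartan. Handling the inner amenable case seems to require genuinely new input. A natural attempt is to exploit the tension between the constraint that inner amenable relations force central sequences in $P'\cap P^\omega$ to essentially lie in $A^\omega$, and the fact that existential closedness makes $P'\cap P^\omega$ extremely large: indeed, applying Theorem~\ref{theorem.model.theory}(3) to the diagonal embedding $P\hookrightarrow (P\,\overline\otimes\, Q)\hookrightarrow P^\omega$ for an arbitrary separable tracial $Q$, one sees that every separable tracial von Neumann algebra embeds into $P'\cap P^\omega$, which is incompatible with $A^\omega$ being abelian. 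Making this clash rigorous -- ruling out inner amenable Cartan inclusions in $\mathcal E$ -- appears to be the essential difficulty and is the reason the statement is left as a conjecture.
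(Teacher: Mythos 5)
The statement you are being asked to prove is explicitly labelled a \emph{conjecture} in the paper: the authors state only the partial evidence coming from Theorem~\ref{theorem.inneramenable} (namely, an existentially closed factor cannot be a group measure space algebra $L^\infty(X)\rtimes\Gamma$ with $\Gamma$ non-inner amenable) and leave the full Cartan version open. There is therefore no proof in the paper to compare your attempt against, and your proposal correctly recognizes this: you identify the non-inner amenable case as the reachable one and isolate the inner amenable case as the genuine open difficulty.

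Your outline of the non-inner amenable case via Feldman--Moore is a reasonable extension of the paper's Theorem~\ref{theorem.inneramenable}: replacing the group $\Gamma$ by a countable pmp equivalence relation $\mathcal{R}$, the full group $[\mathcal{R}]$ and the Fourier expansion along $\mathcal{R}$ in place of the group Fourier expansion, with inner amenability of $\mathcal{R}$ in the Jones--Schmidt sense supplying the analogue of the spectral gap estimate~\eqref{sgap}. This would require some care (one now works with partial isometries rather than group unitaries, and the existence of the needed spectral gap inequality for non-inner amenable $\mathcal{R}$ must be quoted or proved), but the route is sound in spirit and matches exactly the portion of the conjecture the authors themselves say is within reach.

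One concrete error: in your proposed handling of the inner amenable case you write that ``inner amenable relations force central sequences in $P'\cap P^\omega$ to essentially lie in $A^\omega$.'' This is reversed. It is the \emph{non}-inner amenable hypothesis, through the spectral gap estimate $\sum_i\|xu_{g_i}-u_{g_i}x\|_2\geq C\|x-E_A(x)\|_2$, that yields $P'\cap P^\omega\subseteq A^\omega$. In the inner amenable case that containment is exactly what breaks, so your observation that $P'\cap P^\omega$ is huge (every separable tracial algebra embeds, which is correct via the extension $P\subset P\bar\otimes Q\hookrightarrow P^\omega$) produces no clash with $A^\omega$ being abelian, since no one is claiming $P'\cap P^\omega\subseteq A^\omega$ in that case. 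Your concluding admission that you cannot close the gap is accurate; the proposed tension is illusory as stated, and some genuinely different mechanism would be needed to rule out inner amenable Cartan pairs in $\mathcal{E}$.
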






\end{document}